\newtheorem{theorem}{Theorem}
\newtheorem{lemma}{Lemma}[section]
\theoremstyle{definition}
\theoremstyle{remark}
\newtheorem{example}[lemma]{Example}
\numberwithin{equation}{section}
\newcommand{\sgn}{\mathop{\mathrm{sgn}}}
\title{Tilings for Pisot beta numeration}
\author[M.~Minervino]{Milton Minervino}
\address[M.~Minervino]{Chair of Mathematics and Statistics, Department of Mathematics and Information Technology, University of Leoben, Franz-Josef-Strasse 18, A-8700 Leoben, AUSTRIA}
\email{minervino@math.tugraz.at}
\author[W.~Steiner]{Wolfgang Steiner}
\address[W.~Steiner]{LIAFA, CNRS UMR 7089, Universit\'e Paris Diderot -- Paris 7, Case 7014, 75205 Paris Cedex 13, FRANCE}
\email{steiner@liafa.univ-paris-diderot.fr}
\thanks{The first author is supported by the Austrian Science Fund (FWF): W1230, Doctoral Program ``Discrete Mathematics''. Both authors are participants in the ANR/FWF project ``FAN -- Fractals and Numeration'' (ANR-12-IS01-0002, FWF grant I1136)}
\date{\today}
\begin{document}
\begin{abstract}
For a (non-unit) Pisot number $\beta$, several collections of tiles are associated with $\beta$-numeration.
This includes an aperiodic and a periodic one made of Rauzy fractals, a periodic one induced by the natural extension of the $\beta$-transformation and a Euclidean one made of integral beta-tiles. 
We show that all these collections (except possibly the periodic translation of the central tile) are tilings if one of them is a tiling or, equivalently, the weak finiteness property (W) holds.
We also obtain new results on rational numbers with purely periodic $\beta$-expansions; in particular, we calculate $\gamma(\beta)$ for all quadratic $\beta$ with $\beta^2 = a \beta + b$, $\gcd(a,b) = 1$. 
\end{abstract}

\maketitle

\section{Introduction}

The investigation of tilings generated by beta numeration began with the ground work of Thurston \cite{Thurston:89} who produced Euclidean tilings as geometrical picture of the expansion of numbers in a Pisot unit base~$\beta$. 
These tilings are particular instances of substitution tilings, which were introduced by Rauzy in the seminal paper \cite{Rauzy:82}. 
Since then, a~large theory for irreducible unimodular Pisot substitutions has been developed; see e.g.\ the surveys \cite{Berthe-Siegel:05,Berthe-Siegel-Thuswaldner:10}. 
Nevertheless, it is still an open question whether each irreducible unimodular Pisot substitutions naturally defines a tiling (and not a multiple covering) of the respective representation space. This question, known in this context as Pisot conjecture, is related to many different branches of mathematics, such as spectral theory, quasicrystals, discrete geometry and automata.
Note that $\beta$-substitutions can be reducible and that the Pisot conjecture does not hold for reducible substitutions; see e.g.\ \cite{Baker-Barge-Kwapisz:06}. 
However, no example of a $\beta$-substitution failing the Pisot conjecture is known.

The aim of the present paper is to study tilings associated with beta numeration in the context of Pisot numbers that are not necessarily units. 
The space where these tilings are represented consists of a suitable product of Archimedean and non-Archimedean completions of the number field~$\mathbb{Q}(\beta)$. 
The study of substitution tilings in the non-unit case started in~\cite{Siegel:03}, and further advances were achieved e.g.\ in \cite{Sing:06b,Berthe-Siegel:07,Baker-Barge-Kwapisz:06,Minervino-Thuswaldner}.
In~\cite{Akiyama-Barat-Berthe-Siegel:08}, the focus is given in particular on the connection between purely periodic $\beta$-expansions and Rauzy fractals.

In the present paper, we discuss several objects: \emph{Rauzy fractals}, \emph{natural extensions}, and \emph{integral beta-tiles}. 
We recall in Theorem~\ref{th:tprop} some of the main properties of Rauzy fractals associated with beta numeration.
It is well known that they induce an aperiodic multiple tiling of their representation space, and there are several topological, combinatorial, and arithmetical conditions that imply the tiling property. 
In the irreducible unit context, having an aperiodic tiling is equivalent to having a periodic one \cite{Ito-Rao:06}.
The situation is different when we switch to the reducible and non-unit cases. In order to have a periodic tiling, a certain algebraic hypothesis~\eqref{QM}, first introduced in \cite{Siegel-Thuswaldner:09} for substitutions, must hold, and our attention is naturally restricted to a certain \emph{stripe space} when dealing with the non-unit case. 

Another big role in the present paper is played by the natural extension of the $\beta$-shift. 
Recall that the natural extension of a (non-invertible) dynamical system is an invertible dynamical system that contains the original dynamics as a subsystem and that is minimal in a measure theoretical sense; it is unique up to metric isomorphism.
If $\beta$ is a Pisot number, then we obtain a geometric version of the natural extension of the $\beta$-shift by suspending the Rauzy fractals; see Theorem~\ref{t:natext}. This natural extension domain characterises purely periodic $\beta$-expansions \cite{Hama-Imahashi:97,Ito-Rao:05,Berthe-Siegel:07} and forms (in the unit case) a Markov partition for the associated hyperbolic toral automorphism \cite{Praggastis:99}, provided that it tiles the representation space periodically. 
The Pisot conjecture for beta numeration can be stated as follows: the natural extension of the $\beta$-shift is isomorphic to an automorphism of a compact group. 

In the non-unit case, a third kind of compact sets, studied in \cite{Berthe-Siegel-Steiner-Surer-Thuswaldner:11} in the context of shift radix systems and similar to the intersective tiles in \cite{Steiner-Thuswaldner}, turns out to be interesting. 
Integral beta-tiles are Euclidean tiles that can be seen as ``slices'' of Rauzy fractals. 
In Theorem~\ref{th:int}, we provide some of their properties.
In particular, we show that the boundary of these tiles has Lebesgue measure zero; this was conjectured in \cite[Conjecture~7.1]{Berthe-Siegel-Steiner-Surer-Thuswaldner:11}.

One of the main results of this paper is the equivalence of the tiling property for all our collections of tiles. 
We extend the results from \cite{Ito-Rao:06} to the beta numeration case (where the associated substitution need not be irreducible or unimodular), with the restriction that the quotient mapping condition~\eqref{QM} is needed for a periodic tiling with Rauzy fractals.
Our series of equivalent tiling properties also contains that for the collection of integral beta-tiles. 
We complete then our Theorem~\ref{t:tiling} by proving the equivalence of these tiling properties with the weak finiteness property~\eqref{W}, and with a spectral criterion concerning the so-called boundary graph.

Finally, we make a thorough analysis of the properties of the number-theoretical function $\gamma(\beta)$ concerning the purely periodic $\beta$-expansions.
This function was defined in \cite{Akiyama:98} and is still not well understood; see \cite{Adamczewsk-Frougny-Siegel-Steiner:10}, but note that the definition therein differs from ours for non-unit algebraic numbers.
We improve in Theorem~\ref{t:gammabeta} some results of \cite{Akiyama-Barat-Berthe-Siegel:08} and answer in Theorem~\ref{t:quadratic} some of their posed questions for quadratic Pisot numbers.

This paper is organised as follows. 
We define all our objects in Section~\ref{sec:preliminaries}.
The main results are stated in Section~\ref{sec:main-results} and illustrated by an example in Section~\ref{sec:an-example}.
Section~\ref{sec:appr-results-latt} contains some lemmas that are needed in the following.
In Section~\ref{sec:prop-rauzy-fract}, we prove the properties of Rauzy fractals and describe the natural extension.
The properties of integral beta-tiles are investigated in Section~\ref{sec:prop-integr-beta}, in particular the measure of their boundary.
In Section~\ref{sec:equiv-betw-diff}, we prove the equivalence between the different tiling properties, the weak finiteness property~\eqref{W} and the spectral radius of the boundary graph.
Finally, we analyse the gamma function in Section~\ref{sec:gamma-function} and give its explicit value for a wide class of quadratic Pisot numbers. 

We have decided to give a mostly self-contained presentation and have thus included proofs that can be found in other papers, with slight modifications. 

\section{Preliminaries} \label{sec:preliminaries}

\subsection{Beta-numeration}
Let $\beta>1$ be a real number. The map
\begin{equation} 
T:\ [0,1)\to [0,1), \quad x \mapsto \beta x - \lfloor \beta x \rfloor, 
\end{equation}
is the classical greedy \emph{$\beta$-transformation}. 
Each $x\in[0,1)$ has a \emph{(greedy) $\beta$-expansion}
\[  
x = \sum_{k=1}^\infty a_k \beta^{-k}, \quad \text{with} \quad a_k = \lfloor \beta\, T^{k-1}(x)\rfloor;  
\]
the digits $a_k$ are in $\mathcal{A} = \{ 0,1,\ldots,\lceil\beta\rceil-1\}$. 
The set of admissible sequences was characterised first by Parry \cite{Parry:60} and depends only on the limit of the expansions at~$1$.

\subsection{Representation spaces}
In all the following, let $\beta$ be a Pisot number. 
Let $K = \mathbb{Q}(\beta)$, $\mathcal{O}$~its ring of integers, and set $S=\{\mathfrak{p}:\, \mathfrak{p} \mid \infty\ \hbox{or}\ \mathfrak{p} \mid (\beta)\}$. For each (finite or infinite) prime $\mathfrak{p}$
of~$K$, we choose an absolute value $\lvert\cdot\rvert_\mathfrak{p}$ and write $K_\mathfrak{p}$ for the completion of $K$ with respect to $\lvert\cdot\rvert_\mathfrak{p}$. In all what follows, the absolute value $\lvert\cdot\rvert_\mathfrak{p}$ is chosen in the following way. Let $\xi\in K$ be given. If $\mathfrak{p}\mid\infty$, denote by $\xi^{(\mathfrak{p})}$ the associated Galois conjugate of $\xi$. If $\mathfrak{p}$ is real, we set $\lvert\xi\rvert_\mathfrak{p}=\lvert\xi^{(\mathfrak{p})}\rvert$, and if $\mathfrak{p}$ is complex, we set $\lvert\xi\rvert_\mathfrak{p}=|\xi^{(\mathfrak{p})}|^2$. Finally, if $\mathfrak{p}$ is finite, we put $\lvert\xi\rvert_\mathfrak{p}=\mathfrak{N}(\mathfrak{p})^{-v_\mathfrak{p}(\xi)}$, where $\mathfrak{N}(\cdot)$ is the norm of a (fractional) ideal and $v_\mathfrak{p}(\xi)$ denotes the exponent of $\mathfrak{p}$ in the prime ideal decomposition of the principal ideal $(\xi)$.

Define the \emph{representation space}
\[
\mathbb{K}_\beta = \prod_{\mathfrak{p} \in S} K_\mathfrak{p} = \mathbb{K}_\infty \times \mathbb{K}_\mathrm{f}, \quad \mbox{with} \quad \mathbb{K}_\infty = \prod_{\mathfrak{p}\mid\infty} K_\mathfrak{p}, \quad \mathbb{K}_\mathrm{f} = \prod_{\mathfrak{p}\mid(\beta)} K_{\mathfrak{p}}.
\]
(If $\beta$ has $r$ real and $s$ pairs of complex Galois conjugates, then $\mathbb{K}_\infty = \mathbb{R}^r \times \mathbb{C}^s$.)
We equip $\mathbb{K}_\beta$ with the product metric of the metrics defined by the absolute values~$\lvert\cdot\rvert_\mathfrak{p}$ and the product measure $\mu$ of the Haar measures~$\mu_\mathfrak{p}$, $\mathfrak{p}\in S$.
The elements of $\mathbb{Q}(\beta)$ are naturally represented in $\mathbb{K}_\beta$ by the diagonal embedding
\[
\delta:\, \mathbb{Q}(\beta) \to \mathbb{K}_\beta, \quad \xi \mapsto\prod_{\mathfrak{p}\in S} \xi.
\]
The diagonal embeddings $\delta_\infty$ and $\delta_\mathrm{f}$ are defined accordingly.

Let $\mathfrak{p}_1$ be the infinite prime satisfying $|\beta|_{\mathfrak{p}_1} = \beta$. 
Set $S' = S \setminus \{\mathfrak{p}_1\}$, and define $\mathbb{K}_\beta'$, $\mathbb{K}_\infty'$, $\delta'$, $\mu'$, etc.\ accordingly.
Let $\pi_1$ and $\pi'$ be the canonical projections from $\mathbb{K}_\beta$ to $K_{\mathfrak{p}_1}$ and~$\mathbb{K}_\beta'$, respectively.
We will also use the \emph{stripe spaces}  
\[ 
Z = \mathbb{K}_\infty \times \overline{\delta_\mathrm{f}(\mathbb{Z}[\beta])} \quad \mbox{and} \quad Z' = \mathbb{K}'_\infty \times \overline{\delta_\mathrm{f}(\mathbb{Z}[\beta])}.
\] 

\subsection{Beta-tiles}
For $x\in \mathbb{Z}[\beta^{-1}]\cap[0,1)$, define the \emph{$x$-tile} (or \emph{Rauzy fractal}) as 
\begin{equation} \label{e:xtile}
\mathcal{R}(x) = \lim_{k\rightarrow\infty} \delta'\big(\beta^k\,T^{-k}(x)\big) \subseteq \mathbb{K}'_\beta,
\end{equation}
where the limit is taken with respect to the Hausdorff distance, and let 
\begin{align*}
\mathcal{C}_\mathrm{aper} & = \big\{\mathcal{R}(x): x \in \mathbb{Z}[\beta^{-1}] \cap [0,1)\big\}, \\
\tilde{\mathcal{C}}_\mathrm{aper} & = \big\{\mathcal{R}(x): x \in \mathbb{Z}[\beta] \cap [0,1)\big\} \subseteq \mathcal{C}_\mathrm{aper}.
\end{align*}
Note that the limit in~\eqref{e:xtile} exists since $\beta^k \,T^{-k}(x)\subseteq \beta^{k+1}\,T^{-k-1}(x)$ for all $k \in \mathbb{N}$.
The sets
\[
\widehat{V} = \big\{T^k(1^-): k\geq 0\big\}, \qquad V = \big(\widehat{V} \cup\{0\}\big) \setminus \{1\},
\]
with $T^k(1^-) = \lim_{x\to1,x<1} T^k(x)$, are finite because each Pisot number is a Parry number \cite{Bertrand:77,Schmidt:80}.
For $x \in [0,1)$, let
\[
\widehat{x} = \min\big\{y \in \widehat{V}: y > x\big\}.
\]
Thus, for $v \in V$, $\widehat{v}$ is the successor of~$v$ in $V \cup \{1\}$, and $\widehat{V} = \{\widehat{v}: v \in V\}$. 
Let 
\[
L = \big\langle \widehat{V} - \widehat{V} \big\rangle_\mathbb{Z} \subseteq \mathbb{Z}[\beta]
\]
be the $\mathbb{Z}$-module generated by the differences of elements in~$\widehat{V}$ and 
\[
\mathcal{C}_\mathrm{per} = \big\{\delta'(x) + \mathcal{R}(0): x \in L\big\}.
\]
The periodic collection of tiles~$\mathcal{C}_\mathrm{per}$ is locally finite only when 
\begin{equation} \tag{QM} \label{QM}
\mathrm{rank}(L) = \deg(\beta) - 1
\end{equation}
holds, which is an analogue of the \emph{quotient mapping condition} defined in \cite{Siegel-Thuswaldner:09}; see also the definition of the anti-diagonal torus in \cite[Section~8]{Baker-Barge-Kwapisz:06}. 
A~sufficient condition for~\eqref{QM} is that $\# V = \deg(\beta)$. 
In Section~\ref{sec:periodic-tiling-with}, we give examples with $\# V > \deg(\beta)$ where \eqref{QM} holds and does not hold, respectively.

\subsection{Integral beta-tiles}
For $x \in \mathbb{Z}[\beta] \cap [0,1)$, the \emph{integral $x$-tile}
\begin{equation} \label{e:inttile}
\mathcal{S}(x) = \lim_{k\to\infty} \delta'_\infty\Big( \beta^k \big(T^{-k}(x) \cap \mathbb{Z}[\beta]\big) \Big) \subseteq \mathbb{K}'_\infty
\end{equation}
was introduced in \cite{Berthe-Siegel-Steiner-Surer-Thuswaldner:11} in the context of SRS tiles; see also~\cite{Steiner-Thuswaldner}.
Let
\[
\mathcal{C}_\mathrm{int} = \big\{ \mathcal{S}(x): x \in \mathbb{Z}[\beta] \cap [0,1) \big\}.
\]
If $\beta$ is an algebraic unit, then $\mathbb{Z}[\beta] = \mathbb{Z}[\beta^{-1}]$ and $\mathcal{S}(x) = \mathcal{R}(x)$, $\mathcal{C}_\mathrm{int} = \tilde{\mathcal{C}}_\mathrm{aper} = \mathcal{C}_\mathrm{aper}$.

\subsection{Natural extension}
We give a version of the natural extension of the $\beta$-transformation~$T$ with nice algebraic and geometric properties, in the case where $\beta$ is a Pisot number, not necessarily unit. We will do this using the $x$-tiles defined above. 
Let
\begin{gather*}
\mathscr{X} = \bigcup_{v\in V} \Big( [v,\widehat{v}) \times \big(\delta'(v) - \mathcal{R}(v)\big) \Big) \subseteq \mathbb{K}_\beta, \\ 
\mathscr{T}:\ \mathscr{X} \to \mathscr{X}, \quad \mathbf{z} \mapsto \beta\, \mathbf{z} - \delta(\lfloor \beta\, \pi_1(\mathbf{z}) \rfloor),
\end{gather*} 
\begin{align*}
\mathcal{C}_\mathrm{ext} & = \big\{ \delta(x) + \overline{\mathscr{X}}: x \in \mathbb{Z}[\beta^{-1}] \big\}, \\
\tilde{\mathcal{C}}_\mathrm{ext} & = \big\{ \delta(x) + \overline{\mathscr{X}}: x \in \mathbb{Z}[\beta] \big\}.
\end{align*}
The set~$\mathscr{X}$ is the domain and $\mathscr{T}$ the transformation of our natural extension of the beta-transformation on $[0,1)$. 
Note that one usually requires the natural extension domain to be compact. 
Here, we often prefer working with~$\mathscr{X}$ instead of its closure because it has some nice properties, e.g., it characterises the purely periodic expansions. 

\subsection{Tilings}
A~collection~$\mathcal{C}$ of compact subsets of a measurable space~$X$ is called \emph{uniformly locally finite} if there exists an integer~$k$ such that each point of~$X$ is contained in at most $k$ elements of~$\mathcal{C}$.
If moreover there exists a positive integer~$m$ such that almost every point of~$X$ is contained in exactly $m$ elements of~$\mathcal{C}$, then we call $\mathcal{C}$ a \emph{multiple tiling} of~$X$ and $m$ the \emph{covering degree} of the multiple tiling. 
If $m = 1$, then $\mathcal{C}$ is called a \emph{tiling} of~$X$.
We do not require here that $\mathcal{C}$ consists of finitely many subsets up to translation or that each element of~$\mathcal{C}$ is the closure of its interior; these additional properties hold for our collections of Rauzy fractals, but not necessarily for~$\mathcal{C}_\mathrm{int}$.

A~point of~$X$ is called \emph{exclusive point} of~$\mathcal{C}$ if it is contained in exactly one element of~$\mathcal{C}$. 
Thus, a~multiple tiling is a tiling if and only if it has an exclusive point.

\subsection{Boundary graph} \label{sec:boundary-graph-1}
The nodes of the \emph{boundary graph} are the triples $[v,x,w] \in V \times \mathbb{Z}[\beta]  \times V$ such that $x \ne 0$, $\delta'(x) \in \mathcal{R}(v) - \mathcal{R}(w) + \delta'(w-v)$, and $w-\widehat{v} < x < \widehat{w}-v$. There is an edge 
\[
[v,x,w] \stackrel{(a,b)}{\longrightarrow} [v_1,x_1,w_1]\ \mbox{if and only if}\ a,b\in\mathcal{A},\, x_1 = \tfrac{b-a+x}{\beta},\, \tfrac{a+v}{\beta} \in [v_1,\widehat{v_1}),\, \tfrac{b+w}{\beta} \in [w_1,\widehat{w_1}).
\] 
This graph provides expansions of the points that lie in two different elements of~$\mathcal{C}_\mathrm{aper}$. 
If $\mathcal{C}_\mathrm{aper}$ forms a tiling, then these points are exactly the boundary points of the tiles. 

In \cite{Akiyama-Barat-Berthe-Siegel:08}, a slightly different boundary graph is defined that determines the boundary of subtiles instead of that of Rauzy fractals. In their definition, $x$ may be in $\mathbb{Z}[\beta^{-1}]$ and it is shown to be in~$\mathcal{O}$. We will see that $\mathbb{Z}[\beta]$ is sufficient. 

\subsection{Purely periodic expansions}
Let 
\[
\mathrm{Pur}(\beta) = \big\{x \in [0,1): T^k(x) = x\ \mbox{for some}\ k \ge 1\big\}.
\]
be the set of numbers with purely periodic $\beta$-expansion.
By \cite{Bertrand:77,Schmidt:80}, we know that
\begin{equation}\label{persch}
\exists\;k\geq 0:\;T^k(x) \in \mathrm{Pur}(\beta) \quad \mbox{if and only if}\quad x \in\mathbb{Q}(\beta)\cap[0,1).
\end{equation}
Furthermore, the set $\mathrm{Pur}(\beta)$ was characterised in \cite{Hama-Imahashi:97,Ito-Rao:05,Berthe-Siegel:07} by
\begin{equation}\label{perne}
x \in \mathrm{Pur}(\beta) \quad \mbox{if and only if}\quad x \in \mathbb{Q}(\beta),\ \delta(x) \in \mathscr{X};
\end{equation}
see also \cite{Kalle-Steiner:12}.
In particular, we have
\[
\mathbb{Q} \cap \mathrm{Pur}(\beta) \subseteq \mathbb{Z}_{N(\beta)} = \big\{ p/q \in \mathbb{Q}: \gcd(q,N(\beta))=1 \big\};
\]
see e.g.\ \cite[Lemma~4.1]{Akiyama-Barat-Berthe-Siegel:08}.
Here, $N(\beta)$ denotes the norm of the algebraic number~$\beta$. 
We study the quantity
\[
\gamma(\beta) = \sup \big\{ r \in[0,1]: \mathbb{Z}_{N(\beta)} \cap [0,r) \subseteq \mathrm{Pur}(\beta) \big\}
\]
that was introduced in \cite{Akiyama:98}.

\subsection{Weak finiteness}
The arithmetical property 
\begin{equation} \tag{W} \label{W}
\forall\, x \in \mathrm{Pur}(\beta) \cap \mathbb{Z}[\beta]\ \exists\, y \in [0,1\!-\!x),\, n \in \mathbb{N}:\, T^n(x+y) = T^n(y) = 0, 
\end{equation}
turns out to be equivalent to the tiling property of our collections.

\section{Main results} \label{sec:main-results}

In the following theorem, we list some important properties of the $x$-tiles. 
Most of them can be proved exactly as in the unit case, see e.g.\ \cite{Kalle-Steiner:12}.
Some of them can also be found in \cite{Berthe-Siegel:07,Akiyama-Barat-Berthe-Siegel:08} or are direct consequences of the more general results proven in \cite{Minervino-Thuswaldner} in the substitution settings. 
For convenience, we provide a full proof in Section~\ref{sec:proof-theor-refth:tp}.

\begin{theorem} \label{th:tprop}
Let $\beta$ be a Pisot number. 
For each $x \in \mathbb{Z}[\beta^{-1}] \cap [0,1)$, the following hold:
\renewcommand{\theenumi}{\roman{enumi}}
\begin{enumerate}
\itemsep1ex
\item \label{i:t11}
$\mathcal{R}(x)$ is a non-empty compact set that is the closure of its interior.
\item \label{i:t12}
The boundary of $\mathcal{R}(x)$ has Haar measure zero.
\item \label{i:t13}
$\mathcal{R}(x) = \bigcup_{y\in T^{-1}(x)} \beta\, \mathcal{R}(y)$, and the union is disjoint in Haar measure.
\item \label{i:t14}
$\mathcal{R}(x)  - \delta'(x) \subseteq \mathcal{R}(v)  - \delta'(v)$ for all $v \in V$ with $v \le x$.
\item \label{i:t15}
$\mathcal{R}(x)  - \delta'(x) \supseteq \mathcal{R}(v)  - \delta'(v)$ for all $v \in V$ with $\widehat{v} > x$.
\end{enumerate}
Moreover, we have 
\begin{equation} \label{e:covs}
\delta\big(\mathbb{Z}[\beta^{-1}]\big) + \mathscr{X} = \mathbb{K}_\beta,\ \delta\big(\mathbb{Z}[\beta]\big) + \mathscr{X} = Z,\  \hspace{-.5em} \bigcup_{x\in\mathbb{Z}[\beta^{-1}]\cap[0,1)} \hspace{-1.5em} \mathcal{R}(x) = \mathbb{K}'_\beta,\  \hspace{-.5em} \bigcup_{x\in\mathbb{Z}[\beta]\cap[0,1)} \hspace{-1em} \mathcal{R}(x) = Z',
\end{equation}
and 
\begin{equation} \label{e:nesub}
\overline{\mathscr{X}} = \overline{\bigcup_{x\in \mathbb{Z}[\beta]\cap[0,1)} \delta(x) - \{0\}\times \mathcal{R}(x)}.
\end{equation}
\end{theorem}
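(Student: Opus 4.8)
The plan is to adapt the proof of the unit case (see e.g.\ \cite{Kalle-Steiner:12,Berthe-Siegel:07}) to the $S$-adic space $\mathbb{K}'_\beta$, the genuinely new work being confined to the finite places. Throughout I would work with the explicit description
\[
\mathcal{R}(x) = \overline{\bigcup_{k\ge 0}\delta'\big(\beta^k\,T^{-k}(x)\big)},
\]
which is legitimate because the sets $\delta'(\beta^k T^{-k}(x))$ are nested (as noted after~\eqref{e:xtile}) and uniformly bounded: an element of $\beta^k T^{-k}(x)$ has the form $x+\sum_{i=0}^{k-1}c_i\beta^i$ with $c_i\in\mathcal{A}$, so at each infinite prime $\mathfrak{p}\in S'$ its absolute value is at most $|x|_\mathfrak{p}+(\lceil\beta\rceil-1)\sum_{i\ge 0}|\beta|_\mathfrak{p}^i<\infty$ since $\beta$ is Pisot, while at each finite prime $\mathfrak{p}\mid(\beta)$ the $\mathbb{Z}[\beta]$-part $\sum_i c_i\beta^i$ lies in $\mathcal{O}$, so $|\sum_i c_i\beta^i|_\mathfrak{p}\le 1$. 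Since $\mathbb{K}'_\beta$ is a product of copies of $\mathbb{R}$, $\mathbb{C}$ and non-Archimedean local fields, bounded closed sets are compact; this gives compactness in~\ref{i:t11} (and, for $x\in\mathbb{Z}[\beta]$, shows $\mathcal{R}(x)\subseteq Z'$), while non-emptiness is immediate from $x\in\mathcal{R}(x)$.

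I would then settle the set equation and the two inclusions. The disjoint recursion $T^{-k}(x)=\bigsqcup_{y\in T^{-1}(x)}T^{-(k-1)}(y)$ gives $\beta^kT^{-k}(x)=\bigcup_{y\in T^{-1}(x)}\beta\cdot\beta^{k-1}T^{-(k-1)}(y)$, and since multiplication by $\delta'(\beta)$ strictly contracts every coordinate ($\beta$ Pisot, and $v_\mathfrak{p}(\beta)>0$ for $\mathfrak{p}\mid(\beta)$) it is continuous and commutes with closures and finite unions, whence $\mathcal{R}(x)=\bigcup_{y\in T^{-1}(x)}\beta\,\mathcal{R}(y)$, the set-theoretic part of~\ref{i:t13}. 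For~\ref{i:t14} and~\ref{i:t15} I would argue on the generating sets: the translate $\delta'(\beta^kT^{-k}(x))-\delta'(x)$ consists of the $\delta'(\sum_{i=0}^{k-1}c_i\beta^i)$ for which the word $c_{k-1}\cdots c_0$ followed by the greedy expansion of $x$ is $\beta$-admissible, and Parry's criterion --- every shift of an admissible sequence is lexicographically at most the expansion of $1^-$, which is precisely the data recorded by $\widehat{V}$ --- shows that $v\le x$ (resp.\ $\widehat{v}>x$) keeps the very same words admissible with $v$ in place of $x$. Combining~\ref{i:t14} and~\ref{i:t15} for the unique $v\in V$ with $v\le x<\widehat{v}$ gives $\mathcal{R}(x)=\mathcal{R}(v)+\delta'(x-v)$, so up to translation there are only the finitely many tiles $\mathcal{R}(v)$, $v\in V$; rewriting $\delta(x)-\{0\}\times\mathcal{R}(x)=\{x\}\times(\delta'(v)-\mathcal{R}(v))$ for $x\in[v,\widehat{v})\cap\mathbb{Z}[\beta]$ and taking the closure of the union over the dense set $\mathbb{Z}[\beta]\cap[0,1)$ then yields~\eqref{e:nesub}.

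The technical core is~\eqref{e:covs} together with the measure statements. For each $k$ one has $\bigcup_{x\in\mathbb{Z}[\beta^{-1}]\cap[0,1)}\beta^kT^{-k}(x)=\mathbb{Z}[\beta^{-1}]\cap[0,\beta^k)$, so the union of all generating sets is $\delta'(\mathbb{Z}[\beta^{-1}]\cap[0,\infty))$, which is dense in $\mathbb{K}'_\beta$: indeed $\mathbb{Z}[\beta^{-1}]$ lies in the ring of $S$-integers $\mathcal{O}_S$, whose diagonal image is discrete and cocompact in $\mathbb{K}_\beta$ by strong approximation, hence $\delta'(\mathbb{Z}[\beta^{-1}])$ is dense in $\mathbb{K}'_\beta$, and restricting to the positive part is harmless because $\delta'(\beta^m)\to 0$. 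The same input shows that $\delta(\mathbb{Z}[\beta^{-1}])$ is a discrete cocompact subgroup of $\mathbb{K}_\beta$ and that $\delta(\mathbb{Z}[\beta])$ is discrete in $\mathbb{K}_\beta$, so (the tiles having uniformly bounded diameter) each bounded region meets only finitely many tiles; hence $\mathcal{C}_\mathrm{aper}$ and $\tilde{\mathcal{C}}_\mathrm{aper}$ are uniformly locally finite and their unions are closed, so density upgrades to the exact coverings $\bigcup_{x\in\mathbb{Z}[\beta^{-1}]\cap[0,1)}\mathcal{R}(x)=\mathbb{K}'_\beta$ and $\bigcup_{x\in\mathbb{Z}[\beta]\cap[0,1)}\mathcal{R}(x)=Z'$; unwinding the definition of $\mathscr{X}$ (whose half-open pieces partition the $\pi_1$-coordinate) gives in the same way $\delta(\mathbb{Z}[\beta^{-1}])+\mathscr{X}=\mathbb{K}_\beta$ and $\delta(\mathbb{Z}[\beta])+\mathscr{X}=Z$. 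Writing $m_v=\mu'(\mathcal{R}(v))$ and using the product formula $\prod_{\mathfrak{p}\in S}|\beta|_\mathfrak{p}=1$, so that $|\beta|_{\mathfrak{p}_1}=\beta$ forces $\mu'(\beta A)=\beta^{-1}\mu'(A)$, the set equation gives $\beta\,\mathbf{m}\le M\mathbf{m}$ for the evident non-negative transition matrix $M$ indexed by $V$; since the $\beta$-shift has entropy $\log\beta$ the Perron eigenvalue of $M$ is $\beta$, and a Perron--Frobenius argument forces equality, which is the disjointness in measure of~\ref{i:t13} and in particular yields $m_v>0$ for every $v$. With positive measure and this measure-theoretic open set condition, the standard graph-directed self-affine set theory gives that each $\mathcal{R}(x)$ is the closure of its interior~\ref{i:t11} and has boundary of Haar measure zero~\ref{i:t12}.

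I expect the main obstacle to be the passage from a \emph{dense} to an \emph{exact} covering in~\eqref{e:covs} --- equivalently, establishing the uniform local finiteness and then pinning down the covering degree --- and the Perron--Frobenius step that turns the measure-subadditivity coming from the set equation into an equality. Compared with the unit case the extra difficulty is entirely at the finite places: verifying boundedness there via $v_\mathfrak{p}\ge 0$ on $\mathcal{O}$, discreteness and cocompactness via $S$-arithmetic, and that the product formula delivers exactly the scaling factor $1/\beta$.
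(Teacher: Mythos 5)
Your proposal is broadly sound and its architecture overlaps with the paper's, but with two genuine divergences and one genuine gap. The divergences: you obtain the coverings $\bigcup_{x\in\mathbb{Z}[\beta^{-1}]\cap[0,1)}\mathcal{R}(x)=\mathbb{K}'_\beta$ and $\bigcup_{x\in\mathbb{Z}[\beta]\cap[0,1)}\mathcal{R}(x)=Z'$ directly, from density of the union of the generating sets plus local finiteness of the tile family (uniformly bounded diameters, anchors forming a Delone set), which is a clean and correct shortcut; and you get the measure-disjointness in \ref{i:t13} by a Perron--Frobenius sub-invariance argument on the vector $\big(\mu'(\mathcal{R}(v))\big)_{v\in V}$, using the product formula for the scaling factor $1/\beta$ and irreducibility of the incidence graph on~$V$. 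The paper instead proves $\delta(\mathbb{Z}[\beta^{-1}])+\mathscr{X}=\mathbb{K}_\beta$ \emph{first}, by showing $\delta(\mathbb{Q}(\beta))\subseteq\delta(\mathbb{Z}[\beta^{-1}])+\mathscr{X}$ via the eventual periodicity of $\beta$-expansions of elements of $\mathbb{Q}(\beta)$ (the characterisation~\eqref{perne}), and deduces the Rauzy-fractal coverings by slicing; the disjointness it gets from the a.e.\ bijectivity of~$\mathscr{T}$, again via the product formula. Both of your substitutions are legitimate; your Perron--Frobenius step needs the observation that some $\mu'(\mathcal{R}(v))>0$, which your covering supplies.

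The gap: the first two identities of \eqref{e:covs} do not follow ``in the same way'' from your density argument. The fibre of $\delta(\mathbb{Z}[\beta^{-1}])+\mathscr{X}$ over a first coordinate $t\in K_{\mathfrak{p}_1}=\mathbb{R}$ is $\bigcup_{x}\big(\delta'(x+v)-\mathcal{R}(v)\big)$, the union over $x\in\mathbb{Z}[\beta^{-1}]$ with $t-x\in[v,\widehat{v})$; only when $t\in\mathbb{Z}[\beta^{-1}]$ can you substitute $u=t-x$ and recognise (up to reflection and translation) your covering of $\mathbb{K}'_\beta$ by the $\mathcal{R}(u)$. For $t\notin\mathbb{Z}[\beta^{-1}]$ the translated prototiles are indexed by the same Delone set but carry a different ``colouring'' $x\mapsto v(t-x)$, and density of $\delta'(\mathbb{Z}[\beta^{-1}])$ only gives relative density of the anchors, which does not by itself force a covering (the tiles could a priori be too thin to fill the gaps). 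This is repairable --- e.g.\ approximate $t$ by $t_n\in\mathbb{Z}[\beta^{-1}]$, note that only finitely many lattice points $\delta(x)$ can witness the covering of a fixed point in the fibres over the $t_n$, pass to a constant subsequence, and use $t\notin\mathbb{Z}[\beta^{-1}]$ to exclude the boundary case $t-x=\widehat{v}$ --- but this (or the paper's route through $\mathbb{Q}(\beta)$ and the half-open structure of~$\mathscr{X}$) must actually be carried out; as written the step is missing, and $\delta(\mathbb{Z}[\beta])+\mathscr{X}=Z$ additionally needs Lemma~\ref{l:betaZ}. A smaller soft spot: for \ref{i:t11}--\ref{i:t12} you invoke ``standard graph-directed self-affine theory'' to pass from positive measure to non-empty interior; in this mixed Archimedean/non-Archimedean setting it is safer (and shorter) to get non-empty interior of some tile from your covering by Baire's theorem and propagate it with the set equation~\ref{i:t13}, after which closure-of-interior and the measure-zero boundary follow from measure-disjointness exactly as you indicate.
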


The following theorem is informally stated in~\cite{Akiyama-Barat-Berthe-Siegel:08} and other papers; see \cite{Kalle-Steiner:12} for the unit case. 
Here, $B$ and $\mathscr{B}$ denote the Borel $\sigma$-algebras on $X=[0,1)$ and $\mathscr{X}$, respectively. 
The set $\mathscr{X}$ is equipped with the Haar measure~$\mu$, while $X$ is equipped with the measure $\mu \circ \pi_1^{-1}$, which is an absolutely continuous invariant measure for~$T$.

\begin{theorem} \label{t:natext}
Let $\beta$ be a Pisot number. 
The dynamical system $(\mathscr{X},\mathscr{B},\mu,\mathscr{T})$ is a natural extension of $([0,1),B,\mu\circ\pi_1^{-1},T)$. 
\end{theorem}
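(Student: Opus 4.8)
The plan is to verify the four defining properties of a natural extension: that $\mathscr{T}$ is invertible on $\mathscr{X}$ (up to measure zero), that $\mathscr{T}$ preserves $\mu$, that $\pi_1$ is a factor map conjugating $\mathscr{T}$ to $T$, and that $\mathscr{T}$ is the \emph{minimal} invertible extension, i.e., $\bigvee_{n\ge0}\mathscr{T}^n \pi_1^{-1}(B) = \mathscr{B}$ up to $\mu$-null sets. The key structural input is Theorem~\ref{th:tprop}, especially the subdivision identity~\eqref{i:t13} together with the inclusions~\eqref{i:t14}--\eqref{i:t15} and the measure-zero boundary~\eqref{i:t12}; these are exactly what make the map well-defined and bijective off a null set.

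First I would unwind the definition of $\mathscr{X}$ as $\bigcup_{v\in V}[v,\widehat v)\times(\delta'(v)-\mathcal{R}(v))$ and show that $\mathscr{T}$ maps it into itself: for $\mathbf{z}=(t,\mathbf{y})$ with $t\in[v,\widehat v)$, writing $a=\lfloor\beta t\rfloor$, the first coordinate of $\mathscr{T}\mathbf{z}$ is $Tt=\beta t-a\in[0,1)$, and one checks using~\eqref{i:t13} (applied with $x=v$, noting $(a+v)/\beta\in T^{-1}(v')$ for the appropriate $v'\in V$) that the $\mathbb{K}'_\beta$-component lands in $\delta'(v')-\mathcal{R}(v')$ where $v'\le Tt<\widehat{v'}$. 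For surjectivity and injectivity up to measure zero, I would argue that $\mathscr{T}$ restricted to each cylinder $[v,\widehat v)\times(\delta'(v)-\mathcal{R}(v))\cap\{\lfloor\beta\pi_1\rfloor=a\}$ is an affine bijection (multiplication by $\beta$ followed by a translation by $\delta(a)$) onto a piece of $\mathscr{X}$, and that~\eqref{i:t13} says these images tile $\mathscr{X}$ with overlaps of measure zero — this is where~\eqref{i:t12} is essential. Measure preservation then follows because $\beta$ acts on $\mathbb{K}_\beta=\mathbb{K}_\infty\times\mathbb{K}_\mathrm{f}$ with total modulus $1$ (the product of $|\beta|_\mathfrak{p}$ over all $\mathfrak{p}\in S$ is the product formula, equal to $1$), so the affine maps are $\mu$-preserving, and translations by $\delta(a)$ preserve Haar measure.

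That $\pi_1\circ\mathscr{T}=T\circ\pi_1$ is immediate from the formula, since $\pi_1(\mathscr{T}\mathbf{z})=\beta\pi_1(\mathbf{z})-\lfloor\beta\pi_1(\mathbf{z})\rfloor$, and $\pi_1$ pushes $\mu$ forward to an $T$-invariant measure equivalent to the Parry measure on $[0,1)$ (as already asserted in the text preceding the theorem). The remaining and genuinely substantive point is minimality: I need that the smallest $\sigma$-algebra containing all $\mathscr{T}^n\pi_1^{-1}(B)$, $n\ge0$, is $\mathscr{B}$ mod $\mu$. The standard way is to show that the partition of $\mathscr{X}$ into cylinders determined by the first $n$ digits $(a_1,\dots,a_n)$ — which is $\pi_1^{-1}$ of the usual rank-$n$ cylinder of the $\beta$-shift — refines, as $n\to\infty$, to a partition whose atoms are sets of the form $[t,t)\times(\text{something})$ shrinking in the first coordinate while the $\mathbb{K}'_\beta$-fibre over a point is recovered. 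Concretely, $\bigvee_{n\ge0}\mathscr{T}^n\pi_1^{-1}(B)$ contains all sets $\mathscr{T}^n([w,\widehat w)\times\mathbb{K}'_\beta)\cap\mathscr{X}$, and iterating~\eqref{i:t13} backwards shows the fibre direction is generated because $\mathcal{R}(v)=\lim_k\delta'(\beta^kT^{-k}(v))$ is approximated arbitrarily well by finite unions of scaled copies indexed by admissible digit strings. The main obstacle will be making this fibre-generation argument rigorous: one must show the diameters of the fibre-pieces $\beta^k\mathcal{R}(y)$ for $y\in T^{-k}(v)$ tend to $0$ uniformly in the $\mathbb{K}'_\beta$-metric, which holds because $\beta$ is a Pisot number (all conjugates other than $\beta$ itself, and all relevant $\mathfrak{p}$-adic absolute values dividing $(\beta)$, are strictly contracting, i.e.\ $|\beta|_\mathfrak{p}<1$ for $\mathfrak{p}\in S'$). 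Once this contraction is quantified, a standard measure-theoretic argument (e.g.\ via the martingale/approximation criterion for $\sigma$-algebras on a Lebesgue space) closes the minimality, and hence $(\mathscr{X},\mathscr{B},\mu,\mathscr{T})$ is the natural extension.
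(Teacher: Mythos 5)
Your proposal is correct and follows essentially the same route as the paper: the intertwining $T\circ\pi_1=\pi_1\circ\mathscr{T}$ is immediate, bijectivity of $\mathscr{T}$ up to a null set comes from the piecewise affine structure on the sets $\{\lfloor\beta\pi_1\rfloor=a\}$ together with the product formula $\prod_{\mathfrak{p}\in S}|\beta|_{\mathfrak{p}}=1$ (equivalently, the measure-disjointness in Theorem~\ref{th:tprop}~(\ref{i:t13})), and minimality of $\bigvee_k\mathscr{T}^k\pi_1^{-1}(B)$ follows because $\mathscr{T}^k\pi_1^{-1}(J)=T^k(J)\times(\delta'(T^k(v))-\beta^k\mathcal{R}(v))$ for continuity intervals $J\subseteq[v,\widehat v)$ of $T^k$, and multiplication by $\beta$ is strictly contracting on $\mathbb{K}'_\beta$ since $\beta$ is Pisot. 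The only cosmetic difference is that the paper phrases the last step as separating pairs of points of $\mathscr{X}$ rather than as generating the fibre $\sigma$-algebra, but the content is identical.
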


Some of the following properties of integral $\beta$-tiles can be found in~\cite{Berthe-Siegel-Steiner-Surer-Thuswaldner:11}; the main novelty is that we can show that the boundary has measure zero.

\begin{theorem} \label{th:int}
Let $\beta$ be a Pisot number. 
For each $x \in \mathbb{Z}[\beta] \cap [0,1)$, the following hold:
\renewcommand{\theenumi}{\roman{enumi}}
\begin{enumerate}
\itemsep1ex
\item \label{i:t31}
$\mathcal{S}(x)$ is a non-empty compact set.
\item \label{i:t32}
The boundary of $\mathcal{S}(x)$ has Lebesgue measure zero.
\item \label{i:t33}
$\mathcal{S}(x) = \bigcup_{y\in T^{-1}(x)\cap\mathbb{Z}[\beta]} \beta\, \mathcal{S}(y)$.
\item \label{i:t34}
$d_H\big(\mathcal{S}(x)-\delta_\infty'(x), \mathcal{S}(y)-\delta_\infty'(y)\big) \leq 2\, \mathrm{diam}\, \pi'_\infty(\beta^k\, \mathcal{R}(0))$ for all ${y\!\in\!(x\!+\!\beta^k \mathbb{Z}[\beta])\!\cap\![v,\widehat{v})}$, $k \in \mathbb{N}$, where $v \in V$ is chosen such that $x \in [v,\widehat{v})$, and $d_H$ denotes the Hausdorff distance with respect to some metric on~$\mathbb{K}_\infty'$. 
\item \label{i:t35}
If $\beta$ is quadratic, then $\mathcal{S}(x)$ is an interval that intersects $\bigcup_{y\in\mathbb{Z}[\beta]\cap[0,1)\setminus\{x\}} \mathcal{S}(y)$ only at its endpoints.
\end{enumerate}
Moreover, we have 
\begin{equation} \label{e:Scov}
\bigcup_{x\in\mathbb{Z}[\beta]\cap[0,1)} \mathcal{S}(x) = \mathbb{K}'_\infty,
\end{equation}
and, if $\deg(\beta) \ge 2$, 
\begin{align}
\mathcal{R}(v) & = \overline{\bigcup_{x\in\mathbb{Z}[\beta]\cap[v,\widehat{v})} \delta'(v-x) + \mathcal{S}(x) \times \delta_\mathrm{f}(\{0\})} \quad \mbox{for all} \quad v \in V, \label{e:xtileint} \\
\overline{\mathscr{X}} & = \overline{\bigcup_{x \in \mathbb{Z}[\beta] \cap [0,1)} \delta(x) - \{0\} \times \mathcal{S}(x) \times \delta_\mathrm{f}(\{0\})}. \label{e:Xint}
\end{align}
\end{theorem}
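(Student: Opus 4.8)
\smallskip
\noindent\emph{Proof idea.}
The strategy is to regard $\mathcal{S}(x)$ as a slice of the Rauzy fractal $\mathcal{R}(x)$ through a fixed fibre of $\mathbb{K}_\mathrm{f}$ and to transfer the properties already established in Theorem~\ref{th:tprop}, using throughout that, $\beta$ being Pisot, multiplication by $\beta$ contracts $K_\mathfrak{p}$ for every $\mathfrak{p}\in S'$ (the infinite places by the conjugate condition, the finite ones by construction). I would first prove \eqref{i:t31}, \eqref{i:t34} and the existence of the limit in~\eqref{e:inttile} together. If $\tilde x\in T^{-j'}(x)\cap\mathbb{Z}[\beta]$ with $j'>j$, then $T^{j'-j}(\tilde x)$ again lies in $T^{-j}(x)\cap\mathbb{Z}[\beta]$, and $\delta'_\infty(\beta^{j'}\tilde x)$ and $\delta'_\infty(\beta^j T^{j'-j}(\tilde x))$ differ by $\delta'_\infty$ of $\beta^j$ times an element of $\mathbb{Z}[\beta]\cap[0,\beta^{j'-j})$ with admissible expansion, i.e.\ of $\beta^j$ times a point contributing to $\mathcal{R}(0)$; conversely, every element of $T^{-j}(x)\cap\mathbb{Z}[\beta]$ can be refined into $T^{-j'}(x)\cap\mathbb{Z}[\beta]$ by appending appropriate low-order digits (here one uses, as in \cite{Berthe-Siegel-Steiner-Surer-Thuswaldner:11}, that enough residues modulo $\beta^{j'}\mathbb{Z}[\beta]$ are realised by admissible words; a posteriori this is also implied by~\eqref{e:Scov}). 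Hence consecutive approximants are within Hausdorff distance $\mathrm{diam}\,\pi'_\infty(\beta^j\mathcal{R}(0))\to0$, so the defining sequence is Cauchy in the complete space of non-empty compact subsets of $\mathbb{K}'_\infty$ and its limit $\mathcal{S}(x)$ is a non-empty compact set; running the same comparison between the approximants of $\mathcal{S}(x)$ and of $\mathcal{S}(y)$ for $y\in(x+\beta^k\mathbb{Z}[\beta])\cap[v,\widehat v)$ — whose contributing admissible words agree except for their $k$ lowest-order digits — yields~\eqref{i:t34}.

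Next, since $z\in T^{-j}(y)$ forces $\beta^j z\in y+\mathbb{Z}[\beta]$, the set $T^{-j}(y)\cap\mathbb{Z}[\beta]$ is empty whenever $y\notin\mathbb{Z}[\beta]$; therefore $T^{-(k+1)}(x)\cap\mathbb{Z}[\beta]=\bigcup_{y\in T^{-1}(x)\cap\mathbb{Z}[\beta]}\big(T^{-k}(y)\cap\mathbb{Z}[\beta]\big)$, and applying $\delta'_\infty(\beta^{k+1}\,\cdot\,)$ and letting $k\to\infty$ over the finite set $T^{-1}(x)\cap\mathbb{Z}[\beta]$ gives~\eqref{i:t33}. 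The covering~\eqref{e:Scov} is the image under $\pi'_\infty$ of the identity $\bigcup_{x\in\mathbb{Z}[\beta]\cap[0,1)}\mathcal{R}(x)=Z'$ from~\eqref{e:covs}, restricted to the zero fibre of $\mathbb{K}_\mathrm{f}$. For~\eqref{e:xtileint} and~\eqref{e:Xint} I would decompose $\beta^k T^{-k}(v)$, whose elements lie in $v+\mathbb{Z}[\beta]$, according to residues modulo $\beta^k\mathbb{Z}[\beta]$: each class meeting $[v,\widehat v)$ is represented by some $x\in\mathbb{Z}[\beta]\cap[v,\widehat v)$, and — up to boundary effects absorbed by the closure — the corresponding part of $\delta'(\beta^k T^{-k}(v))$ equals $\delta'(v-x)+\delta'(\beta^k(T^{-k}(x)\cap\mathbb{Z}[\beta]))$, whose $\mathbb{K}_\mathrm{f}$-component lies in $\beta^k\,\delta_\mathrm{f}(\mathbb{Z}[\beta])\to\{0\}$; passing to the limit yields~\eqref{e:xtileint}, and combining it with~\eqref{e:nesub} gives~\eqref{e:Xint}. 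A useful byproduct, obtained from the same decomposition together with injectivity of $\delta_\mathrm{f}$ on $\mathbb{Z}[\beta]$ and a Minkowski argument bounding the relevant $\mathbb{Z}[\beta]$-translates, is that for $x\in\mathbb{Z}[\beta]\cap[v,\widehat v)$ the set $\mathcal{S}(x)$ is exactly the fibre of $\mathcal{R}(v)$ over $\delta_\mathrm{f}(v-x)$, translated by $-\delta'_\infty(v-x)$.

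The main obstacle is \eqref{i:t32}. From~\eqref{e:Scov}, uniform local finiteness of $\mathcal{C}_\mathrm{int}$ (which holds as for $\mathcal{C}_\mathrm{aper}$, cf.\ \cite{Berthe-Siegel-Steiner-Surer-Thuswaldner:11}) and compactness one gets $\partial\mathcal{S}(x)\subseteq\bigcup_{y\ne x}\big(\mathcal{S}(x)\cap\mathcal{S}(y)\big)$, so it suffices to bound the Lebesgue measure of the contact sets $\mathcal{S}(x)\cap\mathcal{S}(y)$. Iterating~\eqref{i:t33} — equivalently, taking the zero fibre in the boundary graph of Section~\ref{sec:boundary-graph-1} — these contact sets satisfy a graph-directed set equation whose underlying graph is finite: a node requires its translation vector to lie in a bounded region of the lattice $\delta_\infty(\mathcal{O})\subseteq\mathbb{K}_\infty$, being bounded at $\mathfrak{p}_1$ by the combinatorial constraints, bounded at the other infinite places by boundedness of the $\mathcal{S}$'s, and integral at every finite place since everything lies in $\mathcal{O}$; moreover all maps of the system are multiplication by $\beta$, hence strict contractions on $\mathbb{K}'_\infty$. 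The crux is to show that the Perron root of this contact graph times the measure contraction $\prod_{\mathfrak{p}\mid\infty,\,\mathfrak{p}\ne\mathfrak{p}_1}|\beta|_\mathfrak{p}$ is strictly less than $1$: the incidence matrix governing~\eqref{i:t33} has Perron root exactly $\big(\prod_{\mathfrak{p}\mid\infty,\,\mathfrak{p}\ne\mathfrak{p}_1}|\beta|_\mathfrak{p}\big)^{-1}$ (this is what gives the $\mathcal{S}(x)$ positive finite $\mu'_\infty$-measure), while the contact graph is strictly smaller, so its Perron root is strictly below that value; feeding this into the self-referential estimate $\mu'_\infty(\mathcal{S}(x)\cap\mathcal{S}(y))\le\prod_{\mathfrak{p}\mid\infty,\,\mathfrak{p}\ne\mathfrak{p}_1}|\beta|_\mathfrak{p}\cdot\sum(\text{measures of successor contact sets})$ forces all contact sets, and hence $\partial\mathcal{S}(x)$, to be null. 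Establishing this spectral inequality is the delicate point and is where the improvement over the conjecture of \cite{Berthe-Siegel-Steiner-Surer-Thuswaldner:11} lies.

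Finally, for~\eqref{i:t35} let $\beta$ be quadratic, so that $\mathbb{K}'_\infty=\mathbb{R}$. I would show $\mathcal{S}(x)$ is an interval by induction via~\eqref{i:t33}: multiplication by $\beta$ followed by the digit shift orders the finitely many pieces $\beta\,\mathcal{S}(y)$, $y\in T^{-1}(x)\cap\mathbb{Z}[\beta]$, monotonically according to the order of the $y$'s (up to the sign of the conjugate of $\beta$), consecutive pieces share at most an endpoint because their overlap has measure zero by~\eqref{i:t32}, and each is an interval by the induction hypothesis, so the union $\mathcal{S}(x)$ is an interval; the base case is explicit in dimension one. That $\mathcal{S}(x)$ then meets $\bigcup_{y\in\mathbb{Z}[\beta]\cap[0,1)\setminus\{x\}}\mathcal{S}(y)$ only at its endpoints follows from~\eqref{e:Scov}, \eqref{i:t32} and uniform local finiteness, since a positive-length overlap of two of these intervals would lie in the interior of both and would contribute positive measure to the overlap set, which a direct one-dimensional analysis of the covering excludes.
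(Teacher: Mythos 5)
Your treatment of \eqref{i:t31}, \eqref{i:t33}, \eqref{i:t34}, \eqref{e:Scov}, \eqref{e:xtileint} and \eqref{e:Xint} follows essentially the same route as the paper: the Cauchy-sequence argument for the approximants (the paper makes the ``refinement'' step precise by noting that $\{0,\ldots,|N(\beta)|-1\}$ is a complete residue system of $\mathbb{Z}[\beta]/\beta\mathbb{Z}[\beta]$ and $|N(\beta)|\le\beta$), the identity $\mathcal{R}(x)\cap\mathbb{K}'_\infty\times\delta_\mathrm{f}(\{0\})=\mathcal{S}(x)\times\delta_\mathrm{f}(\{0\})$ as the bridge to Theorem~\ref{th:tprop}, and the translation/density arguments for the two closure formulas. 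Your ordering-of-conjugates idea for \eqref{i:t35} is also the paper's (it proves $\sgn(x_k'-y_k')=\sgn(\beta')^k\sgn(x'-y')$ directly and renormalises), though note that an ``induction via \eqref{i:t33}'' has no base case, since the $\mathcal{S}(x)$ are all defined simultaneously as limits; the paper works with the finite approximants instead and does not need \eqref{i:t32} for this part.

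The proof of \eqref{i:t32}, however, has a genuine gap. You reduce it to showing that every contact set $\mathcal{S}(x)\cap\mathcal{S}(y)$, $y\ne x$, is Lebesgue-null. That statement is strictly stronger than \eqref{i:t32}: it says precisely that $\mathcal{C}_\mathrm{int}$ is a tiling rather than a multiple tiling of covering degree $m\ge 2$, and by Theorem~\ref{t:tiling} this is equivalent to property \eqref{W}, i.e.\ to the (open, in general) Pisot tiling conjecture for beta-numeration. In a multiple tiling of degree $m\ge 2$ the pairwise intersections have positive measure while the boundaries can still be null, so your reduction loses the theorem. Correspondingly, your key spectral claim --- that the Perron root of the contact graph is strictly below that of the subdivision system because the former is ``strictly smaller'' --- is both a non sequitur (the contact graph is a graph on pairs, not a subgraph of the subdivision graph, and even for genuine subgraphs strict decrease of the Perron root requires irreducibility hypotheses) and is exactly condition (v) of Theorem~\ref{t:tiling}, hence cannot be established unconditionally. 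The paper avoids this trap entirely: it bounds $\partial\mathcal{S}(y)$ by the slice of $\partial\mathcal{R}(v)$, covers $\partial\mathcal{R}(v)$ by the $O(\alpha^k)$ level-$k$ subtiles indexed by $R_k(v)$ with $\alpha<\beta$ (estimate \eqref{e:est1}, which only uses that some subtile avoids the boundary because $\mathcal{R}(0)$ has interior), and then averages over a large rescaled rectangle $\beta^{-n}X$ using the residue-class counting Lemma~\ref{l:3Nbeta} and the Delone property to conclude $\mu'_\infty(\partial\mathcal{S}(x))\le c\,(\alpha/\beta)^k\mu'_\infty(X)$ for all $k$. You would need to replace your contact-graph argument by something of this kind.
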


A~series of equivalent tiling conditions constitutes the core of this paper. 

\begin{theorem} \label{t:tiling}
Let $\beta$ be a Pisot number.  
Then the collections $\mathcal{C}_\mathrm{ext}$, $\tilde{\mathcal{C}}_\mathrm{ext}$, $\mathcal{C}_\mathrm{aper}$, $\tilde{\mathcal{C}}_\mathrm{aper}$, and~$\mathcal{C}_\mathrm{int}$ are multiple tilings of $\mathbb{K}_\beta$, $Z$, $\mathbb{K}'_\beta$, $Z'$, and~$\mathbb{K}'_\infty$, respectively, and they all have the same covering degree. 
The following statements are equivalent: 
\renewcommand{\theenumi}{\roman{enumi}}
\begin{enumerate}
\itemsep.5ex
\item \label{i:t41}
All collections $\mathcal{C}_\mathrm{ext}$, $\tilde{\mathcal{C}}_\mathrm{ext}$, $\mathcal{C}_\mathrm{aper}$, $\tilde{\mathcal{C}}_\mathrm{aper}$, and~$\mathcal{C}_\mathrm{int}$ are tilings.
\item \label{i:t42}
One of the collections $\mathcal{C}_\mathrm{ext}$, $\tilde{\mathcal{C}}_\mathrm{ext}$, $\mathcal{C}_\mathrm{aper}$, $\tilde{\mathcal{C}}_\mathrm{aper}$, and~$\mathcal{C}_\mathrm{int}$ is a tiling.
\item \label{i:t43}
One of the collections $\mathcal{C}_\mathrm{ext}$, $\tilde{\mathcal{C}}_\mathrm{ext}$, $\mathcal{C}_\mathrm{aper}$, $\tilde{\mathcal{C}}_\mathrm{aper}$, and~$\mathcal{C}_\mathrm{int}$ has an exclusive point.
\item \label{i:t44}
Property \eqref{W} holds.
\item \label{i:t45}
The spectral radius of the boundary graph is less than~$\beta$. 
\newcounter{enumi_saved}
\setcounter{enumi_saved}{\value{enumi}}
\end{enumerate}
If \eqref{QM} holds, then the following statement is also equivalent to the ones above:
\begin{enumerate}
\setcounter{enumi}{\value{enumi_saved}}
\item \label{i:t46}
$\mathcal{C}_\mathrm{per}$ is a tiling of~$Z'$.
\end{enumerate}
\end{theorem}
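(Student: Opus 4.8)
The plan is to establish, in this order: that every listed collection is a multiple tiling with a well-defined covering degree; that these degrees coincide; the cycle (\ref{i:t41})$\Leftrightarrow$(\ref{i:t42})$\Leftrightarrow$(\ref{i:t43}); the equivalence with \eqref{W}; the spectral criterion (\ref{i:t45}); and, under \eqref{QM}, the equivalence with (\ref{i:t46}). For the first point, \eqref{e:covs} and \eqref{e:Scov} show that each collection covers its ambient space. By Theorem~\ref{th:tprop}\,(\ref{i:t14})--(\ref{i:t15}), if $v\in V$ is the unique element with $x\in[v,\widehat v)$ then $\mathcal R(x)=(\mathcal R(v)-\delta'(v))+\delta'(x)$, so the collections made of Rauzy fractals (and of $\mathscr X$) have only finitely many tile shapes up to translation; a corresponding finiteness for $\mathcal C_\mathrm{int}$ comes from Theorem~\ref{th:int}\,(\ref{i:t34}). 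The relevant translation sets are uniformly discrete, being intersections of the lattices $\delta(\mathbb Z[\beta^{-1}])\subseteq\mathbb K_\beta$ and $\delta(\mathbb Z[\beta])\subseteq Z$ (and of their images under $\pi'$ and $\delta'_\infty$) with the bounded slab $\pi_1^{-1}([0,1))$. Hence all collections are uniformly locally finite, and since tile boundaries are null (Theorem~\ref{th:tprop}\,(\ref{i:t12}), Theorem~\ref{th:int}\,(\ref{i:t32})) each multiplicity function is almost everywhere locally constant and bounded.

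Next I would prove constancy and equality of the degrees. For $\mathcal C_\mathrm{ext}$ the multiplicity descends to the compact abelian group $\mathbb K_\beta/\delta(\mathbb Z[\beta^{-1}])$ (here $\mathbb Z[\beta^{-1}]$ is a cocompact lattice, having finite index in the ring of elements of $\mathbb Q(\beta)$ integral at all primes outside $S$), it is invariant under the map induced by $\mathscr T$ — which, by the set equation for $\mathscr X$ and the bijectivity of $\mathscr T$ on $\mathscr X$ (Theorem~\ref{t:natext}), is the automorphism ``multiplication by $\beta$'' — and that automorphism is ergodic; so the multiplicity is a.e.\ a constant $m\ge1$. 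I transfer this to the other collections by three fibrations: $\mathbb K_\beta$ (resp.\ $\mathbb K'_\beta$) decomposes into translates of $Z$ (resp.\ $Z'$) along coset representatives of $\overline{\delta_\mathrm f(\mathbb Z[\beta])}$ in $\mathbb K_\mathrm f$ chosen inside $\delta_\mathrm f(\mathbb Z[\beta^{-1}])$, on which $\mathcal C_\mathrm{ext}$ (resp.\ $\mathcal C_\mathrm{aper}$) restricts to a translate of $\tilde{\mathcal C}_\mathrm{ext}$ (resp.\ $\tilde{\mathcal C}_\mathrm{aper}$); using $\mathscr X=\bigcup_{v\in V}[v,\widehat v)\times(\delta'(v)-\mathcal R(v))$ with the $[v,\widehat v)$ partitioning $[0,1)$, the $\mathcal C_\mathrm{ext}$-multiplicity at a generic $\mathbf z$ is matched with the $\mathcal C_\mathrm{aper}$-multiplicity along the fibre $\pi_1^{-1}(\pi_1(\mathbf z))$; and \eqref{e:xtileint}, \eqref{e:Xint} present $\mathcal R(v)$ and $\overline{\mathscr X}$ as closures of unions of slices $\mathcal S(x)\times\delta_\mathrm f(\{0\})$ whose heights are dense in $\overline{\delta_\mathrm f(\mathbb Z[\beta])}$, so fibring $Z'$ over its compact factor gives $m_{\tilde{\mathcal C}_\mathrm{aper}}=m_{\mathcal C_\mathrm{int}}$. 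Thus all five degrees equal the same $m$. Now (\ref{i:t41})$\Rightarrow$(\ref{i:t42})$\Rightarrow$(\ref{i:t43}) are immediate, and for (\ref{i:t43})$\Rightarrow$(\ref{i:t41}): an exclusive point of a collection made of Rauzy fractals yields, by uniform local finiteness and Theorem~\ref{th:tprop}\,(\ref{i:t11}) (each tile is the closure of its interior), a non-empty open set of exclusive points, hence degree $1$; an exclusive point of $\mathcal C_\mathrm{int}$ is transported through the last fibration; and by the equality of degrees every collection is a tiling.

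To get (\ref{i:t44})$\Leftrightarrow$(\ref{i:t43}) I would show that \eqref{W} holds iff $\tilde{\mathcal C}_\mathrm{ext}$ (equivalently $\mathcal C_\mathrm{int}$) has an exclusive point. The bridge is~\eqref{perne}: for $x\in\mathbb Z[\beta]\cap[0,1)$ one has $\delta(x)\in\mathscr X$ iff $x\in\mathrm{Pur}(\beta)$, which by Theorem~\ref{th:tprop}\,(\ref{i:t14})--(\ref{i:t15}) amounts to $0\in\mathcal R(x)$; thus the tiles of $\tilde{\mathcal C}_\mathrm{ext}$ through $\delta(x)$ are indexed by the purely periodic $x$, while $T^n(z)=0$ places $\delta'(\beta^n z)$ in the refined piece $\beta^n\mathcal R(0)$ of $\mathcal R(0)$ (Theorem~\ref{th:tprop}\,(\ref{i:t13})). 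So \eqref{W} states exactly that every purely periodic $x\in\mathbb Z[\beta]$ can, together with $0$, be simultaneously approximated from inside by finite $\beta$-expansions; feeding the resulting witnesses through the $\mathscr T$-dynamics produces a point lying in a single tile, and conversely a tiling makes $\delta(x)$ an interior point of $\mathscr X$ whose neighbourhood contains such finite-expansion points, furnishing the witnesses in~\eqref{W}. I expect this to be the main obstacle, as it demands matching the purely arithmetic content of~\eqref{W}, which only sees $\mathbb Z[\beta]$, with the geometry of $\mathscr X\subseteq\mathbb K_\beta$; the stripe space $Z$ and the integral tiles are precisely the tools keeping this bookkeeping tractable.

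For (\ref{i:t45}) I would verify that walks in the boundary graph parametrise the points lying in two distinct elements of $\mathcal C_\mathrm{aper}$, checking along the way that the middle coordinate of a reachable node may be taken in $\mathbb Z[\beta]$ (improving \cite{Akiyama-Barat-Berthe-Siegel:08}). The resulting overlap set is, up to a locally finite union of translates, a graph-directed self-affine subset of $\mathbb K'_\beta$ on which every graph-step multiplies $\mu'$-volume by $\prod_{\mathfrak p\in S'}|\beta|_\mathfrak p=\beta^{-1}$; a standard mass-distribution estimate then shows its $\mu'$-measure vanishes exactly when the number $\rho^n$ of level-$n$ pieces grows slower than $\beta^n$, i.e.\ when the spectral radius $\rho$ of the boundary graph is smaller than~$\beta$, and since $\mathcal C_\mathrm{aper}$ is a multiple tiling with null tile boundaries this is equivalent to $m=1$. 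Finally, assuming \eqref{QM}: $\mathcal C_\mathrm{per}$ is locally finite (Section~\ref{sec:preliminaries}) and, using \eqref{e:covs} together with Theorem~\ref{th:tprop}\,(\ref{i:t14})--(\ref{i:t15}) — which give $\mathcal R(0)=\bigcup_{v\in V}(\mathcal R(v)-\delta'(v))$ — it is a multiple tiling of $Z'$; a volume comparison on the compact quotient $Z'/\delta'(L)$, where \eqref{QM} is exactly what makes $\delta'(L)$ cocompact, shows its covering degree is again $m$, so (\ref{i:t46}) joins the list of equivalences.
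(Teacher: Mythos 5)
Your overall architecture is right, and your ergodicity argument for the constancy of the covering degree of $\mathcal{C}_\mathrm{ext}$ (descending the multiplicity function to the solenoid $\mathbb{K}_\beta/\delta(\mathbb{Z}[\beta^{-1}])$, where $\mathscr{T}$ induces the ergodic automorphism $\mathbf{z}\mapsto\beta\mathbf{z}$) is a legitimate alternative to the quasi-periodicity-plus-expansion argument used in the paper. However, there are three genuine gaps. First, the transfer $m_{\tilde{\mathcal{C}}_\mathrm{aper}}=m_{\mathcal{C}_\mathrm{int}}$ cannot be dispatched by ``fibring $Z'$ over its compact factor'': the fibres on which slices of Rauzy fractals are integral tiles sit at the countably many heights $\delta_\mathrm{f}(\{-u\})$, $u\in\mathbb{Z}[\beta]$, a $\mu_\mathrm{f}$-null set, so Fubini says nothing about them; moreover $\mathcal{R}(x)\cap\mathbb{K}'_\infty\times\delta_\mathrm{f}(\{-u\})$ equals a translate of $\mathcal{S}(x+u)\times\delta_\mathrm{f}(\{0\})$ only when $x+u<\widehat{x}$, so one must choose the height $u$ smaller than $\min\{\widehat{x}-x\}$ over the finitely many relevant $x$ and verify separately that tiles $\mathcal{S}(x+u)$ with $x<0$ do not interfere. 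This is exactly the delicate open-set construction the paper carries out, and your sketch skips it.

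Second, for \eqref{W}$\Rightarrow$(iii): property \eqref{W} supplies, for each $x\in P=\mathrm{Pur}(\beta)\cap\mathbb{Z}[\beta]$ \emph{separately}, some $y$ with $T^n(x+y)=T^n(y)=0$, whereas an exclusive point $\delta'(z)\in\mathcal{R}(0)$ requires a \emph{single} $z$ with $T^n(x+\beta^{-n}z)=0$ simultaneously for all $x\in P$ (via the characterisation $\delta'(z)\in\mathcal{R}(x)\Leftrightarrow x\in T^n(P+\beta^{-n}z)$). Bridging this needs the strengthening that the witness $y$ in \eqref{W} can be taken arbitrarily small, together with a recursive composition of the witnesses $y_1,\dots,y_h$ for the finitely many elements of $P$; your phrase ``simultaneously approximated'' does not supply this construction, and it is the core of the implication. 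Third, for (vi) you assert covering and a ``volume comparison'' on $Z'/\delta'(L)$, but neither the covering $\delta'(L)+\mathcal{R}(0)=Z'$ (which under \eqref{QM} needs $\beta^k\notin L$ for infinitely many $k$) nor the crucial disjointness $\mathbb{N}_\beta\cap L=\{0\}$ is addressed; the latter is precisely where \eqref{QM} enters, and without it the translates $\delta'(x)+\mathcal{R}(0)$, $x\in L$, could overlap in positive measure even when $\mathcal{C}_\mathrm{aper}$ tiles. A volume comparison also has no obvious counterpart on the aperiodic side, since $\mathcal{C}_\mathrm{aper}$ is not lattice-periodic; the actual proof estimates $\mu'(\mathcal{R}(0)\cap(\delta'(x)+\mathcal{R}(0)))$ by counting the elements of $T^{-k}(0)$ lying within $\beta^{-k}x$ of the top of their interval $[v,\widehat v)$. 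Relatedly, in (v) your ``exactly when'' for the vanishing of the overlap measure only covers the direction $\varrho<\beta\Rightarrow$ tiling; the converse requires bounding the number of boundary-graph paths by $O(\alpha^k)$ with $\alpha<\beta$ using that some subtile avoids the boundary.
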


By Theorem~\ref{th:int}~(\ref{i:t35}) or e.g.\ by \cite{Akiyama-Rao-Steiner:04}, the equivalent statements of the theorem hold when $\beta$ is quadratic.

The following bound and formula for $\gamma(\beta)$ simplify those that can be found in~\cite{Akiyama-Barat-Berthe-Siegel:08}.

\begin{theorem} \label{t:gammabeta}
Let $\beta$ be a Pisot number.  
Then 
\begin{equation} \label{e:gammabeta}
\gamma(\beta) \ge \inf \bigg( \{1\} \cup \bigcup_{v\in V} \big\{x \in \mathbb{Q}\cap[v,\widehat{v}):\, \delta'_\infty(v-x) \in \pi'_\infty\big(Z'\setminus\mathcal{R}(v)\big)\big\} \bigg).
\end{equation}
If moreover $\overline{\delta_\mathrm{f}(\mathbb{Q})} = \mathbb{K}_\mathrm{f}$, then equality holds in~\eqref{e:gammabeta}.
\end{theorem}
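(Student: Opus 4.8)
The plan is to connect the definition of $\gamma(\beta)$ with the characterisation of purely periodic expansions in~\eqref{perne}, namely that for $x \in \mathbb{Q}(\beta)$ one has $x \in \mathrm{Pur}(\beta)$ if and only if $\delta(x) \in \mathscr{X}$. Recall that $\mathscr{X} = \bigcup_{v \in V} \big([v,\widehat{v}) \times (\delta'(v) - \mathcal{R}(v))\big)$, so for $x \in [v,\widehat{v})$ the condition $\delta(x) \in \mathscr{X}$ translates to $\delta'(x) \in \delta'(v) - \mathcal{R}(v)$, i.e.\ $\delta'(v-x) \in \mathcal{R}(v) - 2\delta'(v)$... more cleanly, $\delta'(x - v) \in \delta'(0) - (\mathcal{R}(v) - \delta'(v))$, equivalently $\delta'(v) - \delta'(x) \in \mathcal{R}(v)$, i.e.\ $\delta'(v - x) \in \mathcal{R}(v)$. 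So: a rational $x \in \mathbb{Z}_{N(\beta)} \cap [v,\widehat{v})$ fails to be in $\mathrm{Pur}(\beta)$ exactly when $\delta'(v-x) \notin \mathcal{R}(v)$. Since $\mathcal{R}(v) \subseteq Z'$ and $\pi'_\infty$ is the projection to the Archimedean part, the set appearing in~\eqref{e:gammabeta} captures precisely those rationals whose image lands in $Z' \setminus \mathcal{R}(v)$ already at the level of the finite places being trivial; I will need to be careful that $x \in \mathbb{Z}_{N(\beta)}$ forces $\delta_\mathrm{f}(x)$ into the appropriate compact subgroup so that $\delta'(v-x) \in Z'$, which is where the hypothesis $x \in \mathbb{Z}_{N(\beta)}$ (coprimality with $N(\beta)$) enters.

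First I would establish the inequality~\eqref{e:gammabeta}. Fix $r$ strictly below the infimum on the right-hand side. For any $x \in \mathbb{Z}_{N(\beta)} \cap [0,r)$, pick $v \in V$ with $x \in [v,\widehat{v})$; then $x < r$ is below the threshold, so by definition of the infimum we cannot have $\delta'_\infty(v-x) \in \pi'_\infty(Z' \setminus \mathcal{R}(v))$. Combined with the fact that $\delta_\mathrm{f}(v-x)$ lies in $\overline{\delta_\mathrm{f}(\mathbb{Z}[\beta])}$ (using $x \in \mathbb{Z}_{N(\beta)}$, so that $x$ has no denominator at the primes dividing $(\beta)$, hence $\delta'(v-x) \in Z'$), this forces $\delta'(v-x) \in \mathcal{R}(v)$, hence $\delta(x) \in \mathscr{X}$, hence $x \in \mathrm{Pur}(\beta)$ by~\eqref{perne}. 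Therefore $\mathbb{Z}_{N(\beta)} \cap [0,r) \subseteq \mathrm{Pur}(\beta)$, and since $r$ was arbitrary below the infimum, $\gamma(\beta)$ is at least that infimum.

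For the reverse inequality under the extra hypothesis $\overline{\delta_\mathrm{f}(\mathbb{Q})} = \mathbb{K}_\mathrm{f}$, I would argue contrapositively: suppose $r$ is a value of the right-hand set, realised by some $v \in V$ and some $x_0 \in \mathbb{Q} \cap [v,\widehat{v})$ with $\delta'_\infty(v-x_0) \in \pi'_\infty(Z' \setminus \mathcal{R}(v))$. The goal is to produce, for every $\varepsilon > 0$, an element of $\mathbb{Z}_{N(\beta)} \cap [0, r+\varepsilon)$ that is \emph{not} in $\mathrm{Pur}(\beta)$, which shows $\gamma(\beta) \le r$. Since $\delta'_\infty(v - x_0)$ is not in $\pi'_\infty(\mathcal{R}(v))$ and $\mathcal{R}(v)$ is compact with boundary of measure zero (Theorem~\ref{th:tprop}), a small Archimedean neighbourhood of $\delta'_\infty(v-x_0)$ avoids $\pi'_\infty(\mathcal{R}(v))$; now the density hypothesis $\overline{\delta_\mathrm{f}(\mathbb{Q})} = \mathbb{K}_\mathrm{f}$ lets me adjust at the finite places — approximating the correct finite-part target by $\delta_\mathrm{f}(q)$ for rationals $q$ with denominators coprime to $N(\beta)$ — to find a genuine rational $x \in \mathbb{Z}_{N(\beta)}$ close to $x_0$, in the interval $[v,\widehat{v})$ hence with $x < \widehat{v}$, and with $\delta'(v-x) \notin \mathcal{R}(v)$, so that $\delta(x) \notin \mathscr{X}$ and $x \notin \mathrm{Pur}(\beta)$. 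Taking $x$ with $x < r + \varepsilon$ (possible by continuity, since $x_0 \le r$ when $x_0$ is itself a witness — here I should check the infimum is attained or handle it with a limiting argument over witnesses) completes the proof.

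The main obstacle I anticipate is the careful bookkeeping at the finite places: one must verify that moving from the abstract condition $\delta'(v-x) \in Z' \setminus \mathcal{R}(v)$ to a condition only on $\delta'_\infty(v-x)$ is legitimate, i.e.\ that for $x \in \mathbb{Z}_{N(\beta)}$ the finite component $\delta_\mathrm{f}(v-x)$ is automatically in the compact group $\overline{\delta_\mathrm{f}(\mathbb{Z}[\beta])}$ \emph{and} that the fibres of $\pi'_\infty$ over $\mathcal{R}(v)$ within $Z'$ behave well (the relevant statement being that $\mathcal{R}(v) = \pi'_\infty(\mathcal{R}(v)) \times (\text{fibre})$ is \emph{false} in general, so one genuinely works with the image $\pi'_\infty(Z' \setminus \mathcal{R}(v))$ as written, and the subtlety is that a point of $Z'$ may project into $\pi'_\infty(\mathcal{R}(v))$ yet not lie in $\mathcal{R}(v)$). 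The density hypothesis is exactly what is needed to make the approximation at finite places possible; without it, one only gets the inequality. I expect the rest — the translation via~\eqref{perne} and the compactness/measure-zero boundary input from Theorem~\ref{th:tprop} — to be routine.
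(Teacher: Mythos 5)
Your proof of the inequality \eqref{e:gammabeta} is correct and is essentially the paper's argument (stated contrapositively there): for $x\in\mathbb{Z}_{N(\beta)}\cap[v,\widehat{v})$ one has $\delta'(v-x)\in Z'$, so if $\delta'_\infty(v-x)\notin\pi'_\infty(Z'\setminus\mathcal{R}(v))$ then necessarily $\delta'(v-x)\in\mathcal{R}(v)$, hence $\delta(x)\in\mathscr{X}$ and $x\in\mathrm{Pur}(\beta)$ by \eqref{perne}. That direction is fine.

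The second part, however, contains a genuine misstep. You assert that ``a small Archimedean neighbourhood of $\delta'_\infty(v-x_0)$ avoids $\pi'_\infty(\mathcal{R}(v))$''. This is false in general: the hypothesis is $\delta'_\infty(v-x_0)\in\pi'_\infty(Z'\setminus\mathcal{R}(v))$, which only says that \emph{some} point of the fibre of $Z'$ over $\delta'_\infty(v-x_0)$ lies outside $\mathcal{R}(v)$; the same Archimedean point may perfectly well also lie in $\pi'_\infty(\mathcal{R}(v))$ (you yourself point this out in your closing paragraph, so your argument is internally inconsistent). Citing the measure-zero boundary of $\mathcal{R}(v)$ does not help here. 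The correct route — which is what the paper does — is to fix a witness $\mathbf{z}\in Z'\setminus\mathcal{R}(v)$ in the fibre over $\delta'_\infty(v-x_0)$ (or, equivalently, to work with the relatively open set $\{\mathbf{z}\in Z'\setminus\mathcal{R}(v):\pi'_\infty(\mathbf{z})\in\delta'_\infty(v)-\delta'_\infty(\mathbb{Q}\cap(x_0,x_0+\varepsilon))\}$, which is non-empty and open since $\mathcal{R}(v)$ is closed and $\pi'_\infty(Z'\setminus\mathcal{R}(v))$ is open along the line $\delta'_\infty(\mathbb{Q})$), and then to approximate that point by $\delta'(v-y)$ with $y\in\mathbb{Z}_{N(\beta)}\cap(x_0,x_0+\varepsilon)$. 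This approximation is where the real work lies: one needs $\overline{\delta(\mathbb{Z}_{N(\beta)}\cap(x_0,x_0+\varepsilon))}=\overline{\delta_\infty(\mathbb{Q}\cap(x_0,x_0+\varepsilon))}\times\overline{\delta_\mathrm{f}(\mathbb{Z}_{N(\beta)})}$ (the paper invokes \cite[Lemma~4.7]{Akiyama-Barat-Berthe-Siegel:08}) together with the implication $\overline{\delta_\mathrm{f}(\mathbb{Q})}=\mathbb{K}_\mathrm{f}\Rightarrow\overline{\delta_\mathrm{f}(\mathbb{Z}_{N(\beta)})}=\overline{\delta_\mathrm{f}(\mathbb{Z}[\beta])}$, so that the rational approximants actually reach every point of the fibre $\overline{\delta_\mathrm{f}(\mathbb{Z}[\beta])}$. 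Your sketch gestures at ``adjusting at the finite places'' but never formulates this product-density statement, and the Archimedean step it is meant to supplement is wrong as stated; as written the argument does not go through, though the intended architecture is the right one. The remaining loose end you flag (the infimum possibly not being attained) is harmless: producing, for every witness $x_0$ and every $\varepsilon>0$, a non-purely-periodic element of $\mathbb{Z}_{N(\beta)}\cap(x_0,x_0+\varepsilon)$ already yields $\gamma(\beta)\le x_0+\varepsilon$ and hence $\gamma(\beta)\le\inf$.
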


Note that $\overline{\delta'_\infty(\mathbb{Q})}$ is a line in~$\mathbb{K}'_\infty$, thus we essentially have to determine the intersection of a line with the projection of the complement of~$\mathcal{R}(v)$. 
We are able to calculate the explicit value for $\gamma(\beta)$ for a large class of quadratic Pisot numbers. 

\begin{theorem} \label{t:quadratic}
Let $\beta$ be a quadratic Pisot number with $\beta^2 = a \beta + b$, $a \ge b \ge 1$. 
Then 
\begin{equation} \label{e:gammaquadratic}
\gamma(\beta) \ge \max\bigg\{0, 1 - \frac{(b-1)b\,\beta}{\beta^2-b^2}\bigg\},
\end{equation}
and equality holds if $\gcd(a,b) = 1$.
We have $\frac{(b-1)b\,\beta}{\beta^2-b^2} < 1$ if and only if $(b-1)b < a$.
\end{theorem}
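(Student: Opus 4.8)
The plan is to use the characterisation of $\gamma(\beta)$ provided by Theorem~\ref{t:gammabeta}, which applies because for a quadratic Pisot number the field has exactly one finite prime above $(\beta)$ (when $\beta$ is non-unit) or none (when $\beta$ is a unit), and in either case $\overline{\delta_\mathrm{f}(\mathbb{Q})} = \mathbb{K}_\mathrm{f}$ holds: in the unit case $\mathbb{K}_\mathrm{f}$ is trivial, and in the non-unit case $\mathbb{Q}$ is dense in the single non-Archimedean completion $K_\mathfrak{p}$. So it suffices to compute, for each $v \in V$, the infimum of those $x \in \mathbb{Q}\cap[v,\widehat{v})$ for which $\delta'_\infty(v-x) \in \pi'_\infty(Z'\setminus\mathcal{R}(v))$, and then take the overall infimum together with $1$. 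Since $\deg(\beta)=2$, the space $\mathbb{K}'_\infty$ is one-dimensional (it is $\mathbb{R}$, coming from the Galois conjugate $\beta'$ of $\beta$, which satisfies $|\beta'|<1$ as $\beta$ is Pisot), and by Theorem~\ref{th:tprop} each $\mathcal{R}(v)$ is a compact interval that is the closure of its interior. Hence $Z'\setminus\mathcal{R}(v)$ is a neighbourhood of the two endpoints of that interval, and the condition $\delta'_\infty(v-x)\in\pi'_\infty(Z'\setminus\mathcal{R}(v))$ just says that $v-x$, under the conjugate embedding, lies outside the interval $\mathcal{R}(v)-\delta'_\infty(v)$ translated appropriately.

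First I would determine the set $V$ explicitly. For $\beta^2 = a\beta+b$ with $a\ge b\ge 1$ one has $\lfloor\beta\rfloor = a$ (since $a < \beta < a+1$ follows from $b\ge 1$ and $b\le a$), the Parry expansion of $1$ is $d_\beta(1) = (a)(b-1)^\omega$ when $b\ge 1$, actually finite/eventually periodic; computing $T^k(1^-)$ gives $\widehat V = \{1, T(1^-), \dots\}$ with $T(1^-) = \beta - a = b/\beta$ and the orbit stabilising, so that $V = \{0, b/\beta\}$ (when $b>1$; when $b=1$, $T(1^-)=1/\beta$ and $T^2(1^-)=0$, giving $V=\{0\}$ or $\{0,1/\beta\}$ — the small cases should be checked separately but they fall under the $\max\{0,\dots\}$ with the bracketed term vanishing when $b=1$). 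So the real work is for $b\ge 2$: I would handle the two intervals $[0,\widehat 0)$ and $[b/\beta,1)$ and determine on each the threshold $x$ at which the conjugate of $v-x$ exits $\mathcal{R}(v)$. The Rauzy fractals $\mathcal{R}(0)$ and $\mathcal{R}(v)$ for $v=b/\beta$ are explicit intervals whose endpoints are governed by the two relevant periodic $\beta$-expansions (the expansion of $0$ and of $1^-$), and using the set equation $\mathcal{R}(v)=\bigcup_{y\in T^{-1}(v)}\beta\mathcal{R}(y)$ together with the conjugate map $\xi\mapsto\xi'$ one gets a geometric series; the endpoints of $\mathcal{R}(0)$ work out to something like $\bigl[\tfrac{-b\beta'}{1-(\beta')^2}, \tfrac{b\beta'}{?}\bigr]$ — the point is that $\mathrm{diam}\,\mathcal{R}(0)$ and its position translate, after intersecting with the line $\overline{\delta'_\infty(\mathbb{Q})}$, into the quantity $\tfrac{(b-1)b\,\beta}{\beta^2-b^2}$. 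I would carry out this computation carefully, keeping track of signs of $\beta'$ (note $\beta' = a - \beta = -b/\beta < 0$), and read off that the binding constraint comes from $v = b/\beta$ near its left endpoint, giving the candidate value $1 - \tfrac{(b-1)b\,\beta}{\beta^2-b^2}$, with $\gamma(\beta)\ge 0$ always.

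For the lower bound~\eqref{e:gammaquadratic} valid for all $a\ge b\ge 1$, I would argue directly that no number $x\in\mathbb{Z}_{N(\beta)}\cap[0,r)$ with $r = \max\{0, 1-\tfrac{(b-1)b\beta}{\beta^2-b^2}\}$ can fail to be purely periodic: by~\eqref{perne}, $x\in\mathrm{Pur}(\beta)$ iff $\delta(x)\in\mathscr{X}$, equivalently (projecting) $\delta'(x)\in \delta'(v)-\mathcal{R}(v)$ for the $v\in V$ with $x\in[v,\widehat v)$; since $x\in\mathbb{Q}$ the finite component is automatically in $\overline{\delta_\mathrm{f}(\mathbb{Z}[\beta])}$ once $\gcd$ of the denominator with $N(\beta)$ is $1$, so only the Archimedean (conjugate) coordinate matters, and the bound on $r$ is exactly what forces $\delta'_\infty(x)$ to stay inside $\delta'_\infty(v)-\mathcal{R}(v)$. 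For equality when $\gcd(a,b)=1$ one needs, conversely, to exhibit a \emph{rational} $x$ arbitrarily close to $r$ that is \emph{not} purely periodic, i.e. with $\delta'_\infty(v-x)$ just outside $\mathcal{R}(v)$; the coprimality condition guarantees that the denominators one is forced to use are coprime to $N(\beta) = -b$ (so that $x\in\mathbb{Z}_{N(\beta)}$), since $N(\beta)=-b$ and the relevant denominators are powers/combinations built from $a$ — here the hypothesis $\gcd(a,b)=1$ is precisely what makes those denominators invertible mod $b$. The last sentence, $\tfrac{(b-1)b\beta}{\beta^2-b^2}<1 \iff (b-1)b<a$, is an elementary manipulation: multiply out using $\beta^2 = a\beta+b$ to get $\beta^2-b^2 = a\beta+b-b^2 = a\beta - b(b-1)$, so the inequality $(b-1)b\beta < \beta^2 - b^2 = a\beta - b(b-1)$ rearranges to $b(b-1)(\beta+1) < a\beta$, and since $\beta>1$ one checks this is equivalent to $(b-1)b < a$ using $a < \beta < a+1$; I would just do this arithmetic.

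\medskip\noindent\textbf{Main obstacle.} The hardest part will be the equality case: pinning down the exact endpoints of the Rauzy fractals $\mathcal{R}(v)$ in the conjugate coordinate (getting the constant $\tfrac{(b-1)b\beta}{\beta^2-b^2}$ right, with no off-by-$\beta$ errors), and — more subtly — showing that the extremal point of the complement is actually attained by a rational with denominator coprime to $b$, which is where $\gcd(a,b)=1$ enters and without which equality can genuinely fail. Establishing the lower bound alone is comparatively routine once Theorem~\ref{t:gammabeta} and the interval description of $\mathcal{R}(v)$ are in hand.
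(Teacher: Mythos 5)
Your high-level route (apply Theorem~\ref{t:gammabeta}, identify $V=\{0,\beta-a\}$, locate the extremal points of $\mathcal{R}(v)$ via a geometric series in $\beta'$, finish with the elementary equivalence $(b-1)b<a$) is the same as the paper's, and your final arithmetic step is fine. But there are two genuine errors. First, your justification of the equality case is wrong: it is \emph{not} true that $\overline{\delta_\mathrm{f}(\mathbb{Q})}=\mathbb{K}_\mathrm{f}$ for every quadratic Pisot number. Even when there is a single prime $\mathfrak{p}\mid(\beta)$, the completion $K_\mathfrak{p}$ can be a proper (ramified or inert) extension of $\mathbb{Q}_p$, in which $\mathbb{Q}$ is not dense; the paper's own running example $\beta^2=2\beta+2$ has $2$ ramified, and the remark after the theorem states that equality is expected to \emph{fail} for $a=b=2$. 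So your argument would prove too much. The actual mechanism is Lemma~\ref{l:qdense}: $\gcd(a,b)=1$ forces every $p\mid b$ to split with $e_{\mathfrak{p}\mid(p)}=f_{\mathfrak{p}\mid(p)}=1$, which is exactly the hypothesis $\overline{\delta_\mathrm{f}(\mathbb{Q})}=\mathbb{K}_\mathrm{f}$ needed for equality in Theorem~\ref{t:gammabeta}. Your alternative story about ``denominators built from $a$ being invertible mod $b$'' is not what is going on.

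Second, in the non-unit case ($b\ge 2$, which is the only case where the bound is nontrivial) $\mathcal{R}(v)$ lives in $\mathbb{R}\times\mathbb{K}_\mathrm{f}$ and is \emph{not} an interval, so your step ``$Z'\setminus\mathcal{R}(v)$ is a neighbourhood of the two endpoints of that interval'' is unsupported; Theorem~\ref{th:tprop} only gives that $\mathcal{R}(v)$ is the closure of its interior. The real content of the computation is the identification of $\pi'_\infty\big(Z'\setminus\mathcal{R}(v)\big)$ as a union of two half-lines, which the paper obtains from the quadratic boundary description~\eqref{e:quadboundary} (every boundary point of $\mathcal{R}(x)$ lies in $\mathcal{R}(x\pm\beta-\lfloor x\pm\beta\rfloor)$), itself resting on the fact that the integral tiles $\mathcal{S}(x)$ are intervals tiling $\mathbb{R}$ (Theorem~\ref{th:int}~(v)) together with the slicing formula. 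Only then do the extremal walks in the pruned boundary graph (alternating digits $b-1$ and $a-b+1$) produce the value $\frac{(b-1)b\,\beta}{\beta^2-b^2}-1$, and the inclusion $\big[\frac{(b-1)b\,\beta}{\beta^2-b^2}-1,0\big]\times\overline{\delta_\mathrm{f}(\mathbb{Z}[\beta])}\subseteq\mathcal{R}(0)$ needed for the lower bound follows. Without these ingredients both the value of the bound and the equality computation are out of reach; one must also, as the paper does, split into the cases where the threshold lands in $[0,\beta-a)$ versus $[\beta-a,1)$ and treat the tile $\mathcal{R}(\beta-a)$ separately.
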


For $a=b=2$, numerical experiments suggest that $\gamma(\beta) = 1$, while \eqref{e:gammaquadratic} only gives the trivial bound $\gamma(\beta) \ge 0$. 
Thus we believe that equality may not hold in \eqref{e:gammaquadratic} if $\gcd(a,b) > 1$.

\section{An example} \label{sec:an-example}

We illustrate our different tilings for the example $\beta=1+\sqrt{3}$, with $\beta^2 = 2\beta + 2$. 
Here, the prime $2$ ramifies in~$\mathcal{O}$, thus we get the representation space $\mathbb{K}'_\beta = \mathbb{R} \times \mathbb{K}_\mathrm{f}$, with $\mathbb{K}_\mathrm{f}\cong \mathbb{Q}_2^2$.
Each element of $\mathbb{K}_\mathrm{f}$ can be written as $\sum_{j=k}^\infty \delta_\mathrm{f}(d_j \beta^j)$, with $d_j\in\{0,1\}$, and we represent it by $\sum_{j=k}^\infty d_j 2^{-j-1}$ in our pictures.

In Figure~\ref{fig:b2aper}, a~patch of the aperiodic tiling $\mathcal{C}_\mathrm{aper}$ together with the corresponding integral beta-tiles (that form~$\mathcal{C}_\mathrm{int}$) is represented.
The aperiodic tiling $\widetilde{\mathcal{C}}_\mathrm{aper}$ constitutes the ``lowest stripe'' of~$\mathcal{C}_\mathrm{aper}$. 
Another possibility to tile the stripe~$Z'$ is given by the periodic tiling $\mathcal{C}_\mathrm{per}$ that is sketched in Figure~\ref{fig:per}.
In Figure~\ref{fig:natext}, the natural extension domain is shown, which tiles $\mathbb{K}_\beta$ and~$Z$ periodically; see Figure~\ref{f:natexttiling}.

\begin{figure}[ht]
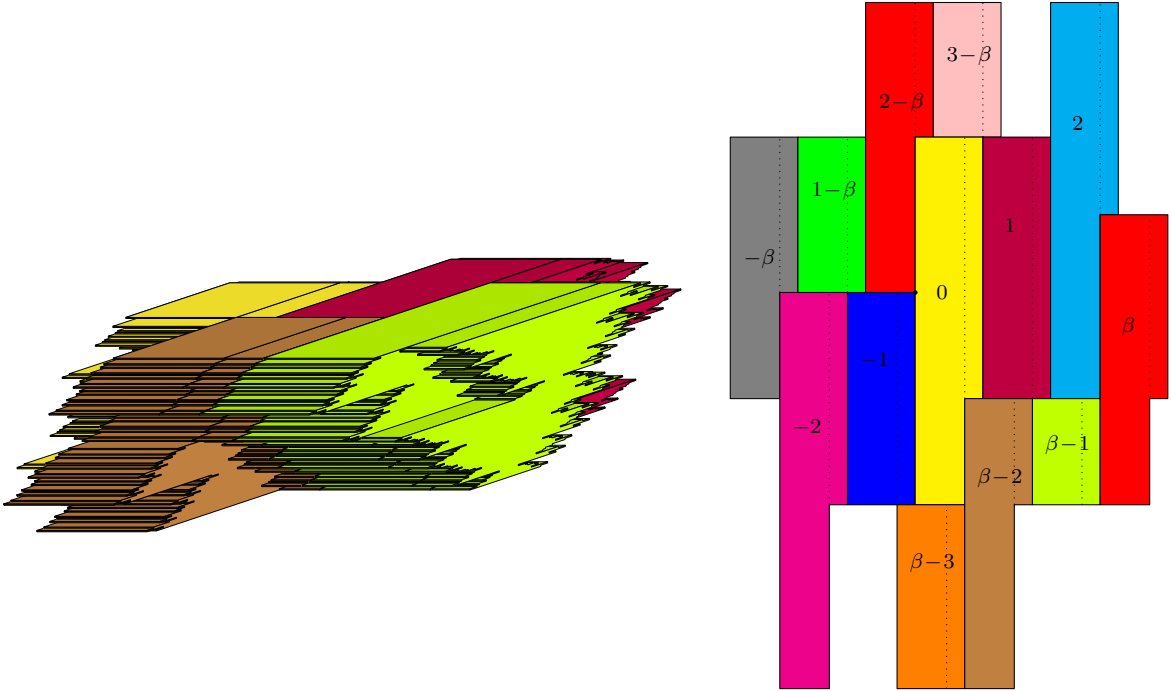

\centering

\caption{Periodic tiling $\tilde{\mathcal{C}}_\mathrm{ext} \subseteq \mathcal{C}_\mathrm{ext}$, $\beta^2=2\beta+2$.
On the left, the following tiles are represented: $\mathscr{X}$ (yellow), $\mathscr{X}+\delta(1)$ (purple), $\mathscr{X}+\delta(\beta-2)$ (brown), $\mathscr{X}+\delta(\beta-1)$ (light green).
On the right, the intersection with $\mathbb{K}_\infty \times \delta_\mathrm{f}(\{0\})$. \label{f:natexttiling}}
\end{figure}

\section{Approximation results and Delone sets} \label{sec:appr-results-latt}

We start with some results that are used in the proofs throughout the paper. 

\begin{lemma}[Strong Approximation Theorem, see e.g. \cite{Cassels:67}] \label{l:sat}
Let $S$ be a finite set of primes and let $\mathfrak{p}_0$ be a prime of a number field $K$ which does not belong to $S$. Let $z_\mathfrak{p}\in K_\mathfrak{p}$ be given numbers, for $\mathfrak{p}\in S$. Then, for every $\varepsilon>0$, there exists $x\in K$ such that
\[ 
|x-z_\mathfrak{p}|_\mathfrak{p} < \varepsilon\ \text{for}\ \mathfrak{p}\in S,\ \text{and}\ |x|_\mathfrak{p} \leq 1\ \text{for}\ \mathfrak{p} \notin S\cup\{\mathfrak{p}_0\}.
\]
\end{lemma}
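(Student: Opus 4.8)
This is a classical fact; a full proof is given in \cite{Cassels:67}, and the plan is simply to reproduce the standard adelic argument. First I would pass to the adele ring $\mathbb{A}_K = \prod'_v K_v$ (restricted product over all places of $K$ with respect to the $\mathcal{O}_v$), in which $K$ embeds diagonally as a discrete and cocompact subgroup. I then factor out the distinguished place $\mathfrak{p}_0$, writing $\mathbb{A}_K = \mathbb{A}_K^{\mathfrak{p}_0} \times K_{\mathfrak{p}_0}$ with $\mathbb{A}_K^{\mathfrak{p}_0} = \prod'_{v\ne\mathfrak{p}_0} K_v$, and let $\pi\colon\mathbb{A}_K\to\mathbb{A}_K^{\mathfrak{p}_0}$ be the projection. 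The assertion to be proved is then equivalent to the density of $\pi(K)$ in $\mathbb{A}_K^{\mathfrak{p}_0}$: given data $(z_\mathfrak{p})_{\mathfrak{p}\in S}$, I form the adele $\widetilde z$ with component $z_\mathfrak{p}$ at each $\mathfrak{p}\in S$ and $0$ at every other place; then $\pi(\widetilde z)$ has the open neighbourhood $\prod_{\mathfrak{p}\in S}\{\,|\cdot|_\mathfrak{p}<\varepsilon\,\}\times\prod_{v\notin S\cup\{\mathfrak{p}_0\}}\{\,|\cdot|_v\le1\,\}$ in $\mathbb{A}_K^{\mathfrak{p}_0}$, and any $x\in K$ with $\pi(x)$ in this neighbourhood is precisely an element satisfying the stated conclusion.

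For the density I would argue by Pontryagin duality. First, $\mathbb{A}_K^{\mathfrak{p}_0}/\overline{\pi(K)}$ is a compact Hausdorff abelian group: it is a quotient of $\mathbb{A}_K^{\mathfrak{p}_0}/\pi(K)\cong\mathbb{A}_K/(K+(\{0\}\times K_{\mathfrak{p}_0}))$, which in turn is a quotient group of the compact group $\mathbb{A}_K/K$. Hence it suffices to show that every continuous character $\chi$ of $\mathbb{A}_K^{\mathfrak{p}_0}$ that is trivial on $\pi(K)$ is already trivial. Such a $\chi$ inflates to a character of $\mathbb{A}_K$ that is trivial on $K$ and on $\{0\}\times K_{\mathfrak{p}_0}$. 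By the self-duality of the adeles — fixing a nontrivial character $\psi=\prod_v\psi_v$ of $\mathbb{A}_K/K$, every character of $\mathbb{A}_K$ trivial on $K$ has the form $y\mapsto\psi(ay)$ for a unique $a\in K$ — the triviality on $\{0\}\times K_{\mathfrak{p}_0}$ forces $y\mapsto\psi_{\mathfrak{p}_0}(ay)$ to be trivial on $K_{\mathfrak{p}_0}$, whence $a=0$ and $\chi$ is trivial. This yields $\mathbb{A}_K^{\mathfrak{p}_0}/\overline{\pi(K)}=0$, i.e.\ $\pi(K)$ is dense, and the lemma follows from the reduction above.

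The only substantial ingredients are the discreteness and cocompactness of $K$ in $\mathbb{A}_K$ and the self-duality of $\mathbb{A}_K$ with $K$ as its own annihilator; both are standard and are exactly the points I would cite \cite{Cassels:67} for, after which the argument is purely formal. An alternative, more self-contained route avoiding duality is to choose a compact set $C\subseteq\mathbb{A}_K$ with $\mathbb{A}_K=K+C$ and to collapse the compact quotient $\mathbb{A}_K^{\mathfrak{p}_0}/\overline{\pi(K)}$ using the multiplicative action of $K^\times$ together with the product formula. The delicate point there — and the step I expect to cost the most work — is that no single scalar is contracting at all places away from $\mathfrak{p}_0$, so one must combine several such scalings with the Chinese Remainder Theorem in $\mathcal{O}$ to adjust the finite and archimedean components separately; this is precisely why I would prefer the duality proof, where this obstacle dissolves.
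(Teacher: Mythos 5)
The paper does not actually prove this lemma: it is stated as a classical result and attributed to \cite{Cassels:67}, so there is no in-paper argument to compare against. Your adelic reconstruction is a correct and standard proof of strong approximation. The reduction to density of $\pi(K)$ in $\mathbb{A}_K^{\mathfrak{p}_0}$ is right, and the duality step is sound: a nontrivial compact Hausdorff quotient would admit a nontrivial character, which inflates to a character of $\mathbb{A}_K$ trivial on $K$, hence of the form $y\mapsto\psi(ay)$ with $a\in K$ by self-duality with $K^\perp=K$; triviality on the $\mathfrak{p}_0$-component and nontriviality of $\psi_{\mathfrak{p}_0}$ force $a=0$. One small imprecision: the set $\prod_{\mathfrak{p}\in S}\{|\cdot-z_\mathfrak{p}|_\mathfrak{p}<\varepsilon\}\times\prod_{v\notin S\cup\{\mathfrak{p}_0\}}\{|\cdot|_v\le 1\}$ is not open if some archimedean place lies outside $S\cup\{\mathfrak{p}_0\}$ (the closed unit ball is not open there); you should shrink to $\{|\cdot|_v<1\}$ at those places, apply density to the resulting genuinely open set, and observe the conclusion follows a fortiori. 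For the record, the proof in Cassels--Fr\"ohlich proceeds not by duality but via the adelic Minkowski/Riemann--Roch lemma (existence of $C>0$ such that any idele of content $>C$ bounds a nonzero element of $K$); your duality route is the one found in Weil-style treatments and, as you note, avoids the delicate multi-place scaling that the elementary argument requires. Either way the lemma is established, consistent with the paper's decision simply to cite it.
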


\begin{lemma} \label{l:betaZ}
For each $x \in \mathbb{Z}[\beta^{-1}] \setminus \mathbb{Z}[\beta]$, we have $\delta_\mathrm{f}(x) \notin \overline{\delta_\mathrm{f}(\mathbb{Z}[\beta])}$.
\end{lemma}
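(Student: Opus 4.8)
The plan is to exploit the arithmetic meaning of the finite places dividing $(\beta)$. Recall $\mathbb{Z}[\beta]$ consists of polynomial expressions in $\beta$ with integer coefficients, while $\mathbb{Z}[\beta^{-1}]$ also allows negative powers of $\beta$; an element $x \in \mathbb{Z}[\beta^{-1}] \setminus \mathbb{Z}[\beta]$ is therefore one whose denominator (in the sense of the ideal it generates) is divisible by primes dividing $(\beta)$. The key observation is that $\delta_\mathrm{f}$ lands in $\mathbb{K}_\mathrm{f} = \prod_{\mathfrak{p} \mid (\beta)} K_\mathfrak{p}$, and $\overline{\delta_\mathrm{f}(\mathbb{Z}[\beta])}$ should be controlled by the valuations $v_\mathfrak{p}$ for $\mathfrak{p} \mid (\beta)$. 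First I would identify $\overline{\delta_\mathrm{f}(\mathbb{Z}[\beta])}$ explicitly: since $\beta \in \mathbb{Z}[\beta]$ and $v_\mathfrak{p}(\beta) \ge 1$ for every $\mathfrak{p} \mid (\beta)$, multiplication by $\beta$ is a contraction in each $K_\mathfrak{p}$, so $\overline{\delta_\mathrm{f}(\mathbb{Z}[\beta])}$ is a compact subring of $\mathbb{K}_\mathrm{f}$; in fact I expect it to equal $\prod_{\mathfrak{p} \mid (\beta)} \mathcal{O}_\mathfrak{p}'$ for suitable local rings, or at least to be contained in $\{z \in \mathbb{K}_\mathrm{f} : |z|_\mathfrak{p} \le 1 \text{ for all } \mathfrak{p} \mid (\beta)\}$.

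The crucial step is to show that $\mathbb{Z}[\beta] = \{ x \in \mathbb{Z}[\beta^{-1}] : |x|_\mathfrak{p} \le 1 \text{ for all } \mathfrak{p} \mid (\beta)\}$, i.e.\ that $\mathbb{Z}[\beta]$ is exactly the set of elements of $\mathbb{Z}[\beta^{-1}]$ that are $\mathfrak{p}$-integral at all primes dividing $(\beta)$. The inclusion $\subseteq$ is clear since $v_\mathfrak{p}(\beta) \ge 0$. For $\supseteq$, take $x = P(\beta^{-1})$ for some $P \in \mathbb{Z}[X]$; then $x = \beta^{-n} Q(\beta)$ with $Q \in \mathbb{Z}[X]$, and $v_\mathfrak{p}(x) \ge 0$ for all $\mathfrak{p} \mid (\beta)$ forces $v_\mathfrak{p}(Q(\beta)) \ge n\, v_\mathfrak{p}(\beta)$; using the fact that $(\beta)^n$ and the ideal generated by $\beta^n$ coincide, plus the Chinese remainder theorem / the fact that $\mathbb{Z}[\beta^{-1}] \cap \mathcal{O} $-type arguments, one deduces $Q(\beta) \in \beta^n \mathbb{Z}[\beta]$, hence $x \in \mathbb{Z}[\beta]$. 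A clean way is to note $\mathbb{Z}[\beta^{-1}] = \mathbb{Z}[\beta][\beta^{-1}]$ is a localization, and $\mathbb{Z}[\beta]$ is the set of elements with non-negative valuation at the multiplicatively inverted primes; this is a standard commutative algebra fact about localizations once one checks that no prime not dividing $(\beta)$ is inverted when passing from $\mathbb{Z}[\beta]$ to $\mathbb{Z}[\beta^{-1}]$.

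Granting this characterization, the lemma follows: if $x \in \mathbb{Z}[\beta^{-1}] \setminus \mathbb{Z}[\beta]$, then $|x|_{\mathfrak{p}_0} > 1$ for some $\mathfrak{p}_0 \mid (\beta)$, so $\delta_\mathrm{f}(x)$ has a coordinate outside the unit ball of $K_{\mathfrak{p}_0}$. Since every element of $\delta_\mathrm{f}(\mathbb{Z}[\beta])$ has all coordinates in the respective unit balls, and this condition is closed, $\overline{\delta_\mathrm{f}(\mathbb{Z}[\beta])}$ is contained in $\prod_{\mathfrak{p} \mid (\beta)} \{|\cdot|_\mathfrak{p} \le 1\}$, which does not contain $\delta_\mathrm{f}(x)$. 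Hence $\delta_\mathrm{f}(x) \notin \overline{\delta_\mathrm{f}(\mathbb{Z}[\beta])}$.

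The main obstacle I anticipate is the commutative-algebra step establishing $\mathbb{Z}[\beta] = \{x \in \mathbb{Z}[\beta^{-1}] : v_\mathfrak{p}(x) \ge 0 \ \forall\, \mathfrak{p} \mid (\beta)\}$, because $\mathbb{Z}[\beta]$ need not be integrally closed (so one cannot simply invoke intersection-of-localizations for Dedekind domains), and one must carefully check that inverting $\beta$ in $\mathbb{Z}[\beta]$ inverts precisely the primes $\mathfrak{p} \mid (\beta)$ and nothing more — equivalently, that a prime $\mathfrak{q}$ of $\mathbb{Z}[\beta]$ with $\beta \notin \mathfrak{q}$ survives in $\mathbb{Z}[\beta^{-1}]$, while those containing $\beta$ die. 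This localization bookkeeping, together with pinning down exactly what $\overline{\delta_\mathrm{f}(\mathbb{Z}[\beta])}$ is (its description as a product of local rings may require the approximation tools of Lemma~\ref{l:sat} to show density of $\delta_\mathrm{f}(\mathbb{Z}[\beta])$ in the expected compact set), is where the real work lies; the deduction of the lemma from these facts is then immediate by the closedness of the valuation conditions.
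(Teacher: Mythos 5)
Your argument hinges on the identity $\mathbb{Z}[\beta] = \{x \in \mathbb{Z}[\beta^{-1}] : v_\mathfrak{p}(x) \ge 0 \text{ for all } \mathfrak{p} \mid (\beta)\}$, and this is false in general. Since every element of $\mathbb{Z}[\beta^{-1}]$ is automatically integral at the finite primes not dividing $(\beta)$, the right-hand side is simply $\mathbb{Z}[\beta^{-1}] \cap \mathcal{O}$, which can strictly contain $\mathbb{Z}[\beta]$ whenever the order $\mathbb{Z}[\beta]$ is a proper suborder of $\mathcal{O}$. Concretely, take $\beta = 2 + 2\sqrt{2}$, the Pisot root of $X^2 - 4X - 4$: here $\mathcal{O} = \mathbb{Z}[\sqrt{2}]$, $\mathbb{Z}[\beta] = \mathbb{Z} + 2\sqrt{2}\,\mathbb{Z}$ has index $2$, and $\beta^{-1} = (\sqrt{2}-1)/2$, so $\sqrt{2} = 2\beta^{-1} + 1 \in \mathbb{Z}[\beta^{-1}] \cap \mathcal{O}$ while $\sqrt{2} \notin \mathbb{Z}[\beta]$. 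This is precisely the obstruction you flag at the end ($\mathbb{Z}[\beta]$ is not integrally closed, hence not an intersection of valuation rings), but it is not a bookkeeping issue to be worked around: the characterization genuinely fails. Consequently your closing step, which excludes $\delta_\mathrm{f}(x)$ from the product of unit balls, only disposes of $x \in \mathbb{Z}[\beta^{-1}] \setminus \mathcal{O}$ and says nothing about $x \in (\mathbb{Z}[\beta^{-1}] \cap \mathcal{O}) \setminus \mathbb{Z}[\beta]$, which is the hard case and is non-empty in the example above; there $\delta_\mathrm{f}(\sqrt{2})$ does lie in the unit ball of $K_{(\sqrt{2})}$, yet the lemma still asserts it is not in $\overline{\delta_\mathrm{f}(\mathbb{Z}[\beta])}$.

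The paper's proof avoids describing $\mathbb{Z}[\beta]$ by valuation conditions altogether. It uses only that $\mathbb{Z}[\beta]$ has finite index in $\mathcal{O}$ to produce a single $h \in \mathbb{N}$ with $\mathbb{Z}[\beta^{-1}] \cap \mathcal{O} \subseteq \beta^{-h}\mathbb{Z}[\beta]$, and then argues by contradiction: if $\delta_\mathrm{f}(x)$ were in the closure, there would be $y \in \mathbb{Z}[\beta]$ with $|y-x|_\mathfrak{p} \le |\beta^h|_\mathfrak{p}$ for all $\mathfrak{p} \mid (\beta)$, whence $y - x \in \beta^h\mathcal{O} \cap \mathbb{Z}[\beta^{-1}] \subseteq \mathbb{Z}[\beta]$ and so $x \in \mathbb{Z}[\beta]$, a contradiction. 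The point is that one only needs $\mathbb{Z}[\beta]$ to contain the intersection of $\mathbb{Z}[\beta^{-1}]$ with a sufficiently small $\mathfrak{p}$-adic neighbourhood of $0$, not to be cut out by the conditions $|\cdot|_\mathfrak{p} \le 1$. To repair your proof you would have to shrink your unit balls to the balls $\{|z|_\mathfrak{p} \le |\beta^h|_\mathfrak{p}\}$ and justify the inclusion above, which is essentially the paper's argument.
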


\begin{proof}
We first show that
\begin{equation} \label{e:h}
\mathbb{Z}[\beta^{-1}] \cap \mathcal{O} \subseteq \beta^{-h} \mathbb{Z}[\beta]
\end{equation}
for some $h \in \mathbb{N}$. 
As $\mathbb{Z}[\beta]$ is a subgroup of finite index of~$\mathcal{O}$, we can choose $x_1,\ldots,x_n \in \mathcal{O}$ that form a complete set of representatives of $\mathcal{O}/\mathbb{Z}[\beta]$. 
Choose integers $h_1,\ldots, h_n$ as follows. 
If $x_i \notin \mathbb{Z}[\beta^{-1}]$, then set $h_i=0$ and notice that $(x_i+\mathbb{Z}[\beta]) \cap \mathbb{Z}[\beta^{-1}] = \emptyset$. 
If $x_i \in \mathbb{Z}[\beta^{-1}]$, then choose $h_i\geq 0$ such that $x_i \in \beta^{-h_i} \mathbb{Z}[\beta]$, hence $x_i+\mathbb{Z}[\beta] \subseteq \beta^{-h_i} \mathbb{Z}[\beta]$.
Then 
\[ 
\mathbb{Z}[\beta^{-1}] \cap \mathcal{O} = \bigcup_{i=1}^n \big(\mathbb{Z}[\beta^{-1}] \cap (\mathbb{Z}[\beta]+x_i)\big) \subseteq \beta^{-\max\{h_i\}} \mathbb{Z}[\beta],
\] 
thus~\eqref{e:h} holds with $h = \max\{h_i\}$.

Let now $x \in \mathbb{Z}[\beta^{-1}] \setminus \mathbb{Z}[\beta]$ and suppose that $\delta_\mathrm{f}(x) \in \overline{\delta_\mathrm{f}(\mathbb{Z}[\beta])}$.
Then there is $y \in \mathbb{Z}[\beta]$ such that $|y - x|_\mathfrak{p} \le |\beta^h|_\mathfrak{p}$ for all $\mathfrak{p} \mid (\beta)$, with $h$ as above, i.e., $y-x \in \beta^h\, \mathcal{O}$.
By~\eqref{e:h}, we obtain that $y-x \in \mathbb{Z}[\beta]$, contradicting that $y \in \mathbb{Z}[\beta]$ and $x \notin \mathbb{Z}[\beta]$.
\end{proof}

\begin{lemma}\label{l:qdense}
Let $(\beta)=\prod_i \mathfrak{p}_i^{m_i}$, with $\mathfrak{p}_i\mid (p_i)$.
Then $\overline{\delta_\mathrm{f}(\mathbb{Q})} = \mathbb{K}_\mathrm{f}$ if and only if $e_{\mathfrak{p}_i\mid (p_i)} = f_{\mathfrak{p}_i|(p_i)} = 1$ for all~$i$ and the prime numbers $p_i$ are all distinct.

If $\beta$ is quadratic, $\beta^2 = a \beta + b$, then $\gcd(a,b) = 1$ implies $\overline{\delta_\mathrm{f}(\mathbb{Q})} = \mathbb{K}_\mathrm{f}$. 
\end{lemma}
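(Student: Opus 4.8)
The plan is to reduce the claimed equivalence to two transparent local conditions and then to verify these conditions in the quadratic case.

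First I would analyse the closure $\overline{\delta_\mathrm{f}(\mathbb{Q})}$ coordinate by coordinate. For a prime $\mathfrak{p}\mid(p)$, the restriction of $\lvert\cdot\rvert_\mathfrak{p}$ to $\mathbb{Q}$ is a power of the $p$-adic absolute value, so the closure of $\mathbb{Q}$ inside $K_\mathfrak{p}$ is a copy of $\mathbb{Q}_p$, a $\mathbb{Q}_p$-subspace of codimension $e_{\mathfrak{p}\mid(p)}f_{\mathfrak{p}\mid(p)}-1$ in $K_\mathfrak{p}$. Since the image of $\overline{\delta_\mathrm{f}(\mathbb{Q})}$ under the projection onto the factor $K_\mathfrak{p}$ is contained in this closure, the equality $\overline{\delta_\mathrm{f}(\mathbb{Q})}=\mathbb{K}_\mathrm{f}$ forces $K_\mathfrak{p}=\mathbb{Q}_p$, i.e.\ $e_{\mathfrak{p}\mid(p)}=f_{\mathfrak{p}\mid(p)}=1$, for every $\mathfrak{p}\mid(\beta)$. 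It also forces the rational primes $p_i$ below the $\mathfrak{p}_i\mid(\beta)$ to be pairwise distinct: if two of them, say $\mathfrak{p}_i$ and $\mathfrak{p}_j$, lay above the same $p$, then every $\delta_\mathrm{f}(q)$ ($q\in\mathbb{Q}$) would have equal $\mathfrak{p}_i$- and $\mathfrak{p}_j$-components, so $\overline{\delta_\mathrm{f}(\mathbb{Q})}$ would be contained in the proper closed subgroup of $\mathbb{K}_\mathrm{f}$ defined by that equality. This settles the ``only if'' direction. For the converse, if all $e_i=f_i=1$ and the $p_i$ are distinct then $\mathbb{K}_\mathrm{f}\cong\prod_i\mathbb{Q}_{p_i}$ and $\delta_\mathrm{f}|_\mathbb{Q}$ becomes the diagonal embedding $\mathbb{Q}\to\prod_i\mathbb{Q}_{p_i}$; its density is exactly weak approximation for $\mathbb{Q}$, and it follows at once from Lemma~\ref{l:sat} applied with $K=\mathbb{Q}$, $S=\{p_1,\dots,p_r\}$, and $\mathfrak{p}_0$ the archimedean place.

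For the quadratic assertion, write $f(X)=X^2-aX-b$, irreducible since $\beta$ is quadratic, so $\beta\in\mathcal{O}$, $\mathfrak{N}((\beta))=\lvert N_{K/\mathbb{Q}}(\beta)\rvert=\lvert b\rvert$, and $\mathrm{disc}(f)=a^2+4b$. If $\lvert b\rvert=1$ then $(\beta)=\mathcal{O}$ and there is nothing to show, so fix a rational prime $p$ dividing $\lvert b\rvert$ and a prime $\mathfrak{p}\mid(\beta)$ above $p$. From $\gcd(a,b)=1$ and $p\mid b$ we get $p\nmid a$, hence $\mathrm{disc}(f)=a^2+4b\equiv a^2\not\equiv0\pmod{p}$; thus $p\nmid\mathrm{disc}(f)=[\mathcal{O}:\mathbb{Z}[\beta]]^2\,\mathrm{disc}(K)$, so $p\nmid[\mathcal{O}:\mathbb{Z}[\beta]]$ and the Dedekind--Kummer theorem applies. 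Since $f\equiv X(X-a)\pmod{p}$ with $X$ and $X-a$ distinct linear factors over $\mathbb{F}_p$, we get $(p)=(p,\beta)\,(p,\beta-a)$, a product of two distinct primes of residue degree~$1$; in particular both primes above $p$ are unramified with residue degree~$1$. Finally $\beta\equiv a\not\equiv0$ modulo $(p,\beta-a)$, so $(p,\beta-a)\nmid(\beta)$ while $(p,\beta)\mid(\beta)$. Hence each rational prime below a prime divisor of $(\beta)$ admits a unique such divisor, which is unramified of residue degree~$1$, and distinct rational primes yield distinct divisors. By the first part, $\overline{\delta_\mathrm{f}(\mathbb{Q})}=\mathbb{K}_\mathrm{f}$.

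The only genuinely delicate point is licensing the Dedekind--Kummer theorem, which requires $p\nmid[\mathcal{O}:\mathbb{Z}[\beta]]$; here this is painless because $\mathrm{disc}(f)=a^2+4b$ is explicit and prime to~$p$. If one prefers not to invoke $\mathcal{O}$ at all, the same conclusion can be reached by factoring $f$ over $\mathbb{Q}_p$ via Hensel's lemma (using $f(0)=-b\equiv0$ and $f'(0)=-a\not\equiv0\pmod{p}$), which yields $K\otimes_\mathbb{Q}\mathbb{Q}_p\cong\mathbb{Q}_p\times\mathbb{Q}_p$ with $\beta$ a unit in one factor and in $p\mathbb{Z}_p$ in the other.
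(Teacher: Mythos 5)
Your proof is correct and follows essentially the same route as the paper: reduce density to the local conditions $e=f=1$ plus distinctness of the underlying rational primes via Lemma~\ref{l:sat}, and in the quadratic case use $\gcd(a,b)=1$ to show $p\nmid a^2+4b=\mathrm{disc}(\mathbb{Z}[\beta])$ so that Dedekind--Kummer gives the splitting of $(p)$. You are in fact slightly more careful than the paper at one point: you explicitly check that only one of the two primes above $p$, namely $(p,\beta)$, divides $(\beta)$, which is needed to verify the distinctness hypothesis of the first part and is left implicit in the paper's argument.
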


\begin{proof}
By Lemma~\ref{l:sat}, $\delta_\mathrm{f}(\mathbb{Q})$~is dense in $\prod_i \mathbb{Q}_{p_i}$ if and only if the $p_i$ are distinct. By $[K_{\mathfrak{p}_i}:\mathbb{Q}_{p_i}]=e_{\mathfrak{p}_i\mid (p_i)}f_{\mathfrak{p}_i\mid (p_i)}$, if either $e_{\mathfrak{p}_i\mid (p_i)}$ or $f_{\mathfrak{p}_i\mid (p_i)}$ is greater than $1$, then $\delta_\mathrm{f}(\mathbb{Q})$ cannot be dense in $\mathbb{K}_\mathrm{f}$. The other direction is similar.

If $\beta^2 = a \beta + b$ and $\gcd(a,b) = 1$, given $p\mid b$,  we have that $p\nmid \mathrm{disc}(\mathbb{Z}[\beta])=a^2+4b$. Thus $p\nmid [\mathcal{O}:\mathbb{Z}[\beta]]$, by the formula $\mathrm{disc}(\mathbb{Z}[\beta]) = [\mathcal{O}:\mathbb{Z}[\beta]]^2\cdot \mathrm{disc}(\mathbb{Q}(\beta))$ (see e.g.~\cite[Proposition~4.4.4]{Cohen:93}). Hence we can apply \cite[Theorem~4.8.13]{Cohen:93} and obtain that $(p)$ splits, since $\gcd(a,b) = 1$. This means $e_{\mathfrak{p}\mid (p)} = f_{\mathfrak{p}|(p)} = 1$ for all~$\mathfrak{p}\mid(p)$.
\end{proof}

\begin{lemma} \label{l:del}
The set $\delta(\mathbb{Z}[\beta^{-1}])$ is a lattice in $\mathbb{K}_\beta$. Furthermore each set $\delta'(\mathbb{Z}[\beta^{-1}]\cap X)$, where $X\subset\mathbb{R}$ is bounded and has non-empty interior, is a Delone set in $\mathbb{K}_\beta'$.
\end{lemma}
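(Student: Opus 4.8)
The plan is to prove the two statements separately, both relying on the fact that the diagonal embedding $\delta$ realizes $\mathbb{Q}(\beta)$ as a discrete cocompact subgroup of the adelic-type space $\mathbb{K}_\beta$ once we restrict to $S$-integers, combined with the multiplicative action of~$\beta$. First I would treat $\delta(\mathbb{Z}[\beta^{-1}])$. The ring $\mathbb{Z}[\beta^{-1}]$ is contained in the ring of $S$-integers $\mathcal{O}_S = \{\xi \in K : |\xi|_\mathfrak{p} \le 1 \text{ for all } \mathfrak{p} \notin S\}$, since every element of $\mathbb{Z}[\beta^{-1}]$ has non-negative valuation at each finite prime not dividing~$(\beta)$ (the only denominators introduced are powers of~$\beta$). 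The classical fact (e.g.\ from the theory of $S$-adic rings, cf.\ \cite{Cassels:67}) is that $\delta(\mathcal{O}_S)$ is a lattice in $\prod_{\mathfrak{p}\in S} K_\mathfrak{p} = \mathbb{K}_\beta$. To upgrade this to $\mathbb{Z}[\beta^{-1}]$ itself, I would observe that $\mathbb{Z}[\beta^{-1}]$ is a subgroup of finite index in $\mathcal{O}_S$: indeed $\mathbb{Z}[\beta]$ has finite index in~$\mathcal{O}$ (as $\beta$ is an algebraic integer generating~$K$), and localizing at the primes of~$S$ preserves this finiteness, so $\mathbb{Z}[\beta^{-1}] = \mathbb{Z}[\beta]\big[\tfrac1\beta\big]$ has finite index in $\mathcal{O}[\tfrac1\beta] = \mathcal{O}_S$. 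A finite-index subgroup of a lattice is again a lattice, giving the first claim.

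For the second statement I would fix a bounded set $X \subset \mathbb{R}$ with non-empty interior and consider $\delta'(\mathbb{Z}[\beta^{-1}]\cap X) \subseteq \mathbb{K}_\beta'$. Recall $\mathbb{K}_\beta = K_{\mathfrak{p}_1} \times \mathbb{K}_\beta'$ and $\pi_1$ projects onto the first coordinate, where $|\beta|_{\mathfrak{p}_1} = \beta$, so $K_{\mathfrak{p}_1} = \mathbb{R}$ and $\pi_1 \circ \delta$ is the identity embedding $\xi \mapsto \xi$ of $\mathbb{Q}(\beta)$ into~$\mathbb{R}$. Thus $\delta'(\mathbb{Z}[\beta^{-1}]\cap X)$ is exactly the image under $\pi'$ of $\delta(\mathbb{Z}[\beta^{-1}]) \cap (X \times \mathbb{K}_\beta')$, i.e.\ the slice of the lattice lying over~$X$, projected to $\mathbb{K}_\beta'$. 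For \emph{uniform discreteness} (a positive lower bound on distances): if two points $\delta'(x), \delta'(y)$ with $x,y \in \mathbb{Z}[\beta^{-1}]\cap X$ were arbitrarily close in $\mathbb{K}_\beta'$, then since $|x-y|_{\mathfrak{p}_1} = |x-y| \le \mathrm{diam}(X)$ is also bounded, $\delta(x-y)$ would be a non-zero lattice point of $\delta(\mathbb{Z}[\beta^{-1}])$ arbitrarily close to~$0$ in~$\mathbb{K}_\beta$, contradicting discreteness of the lattice; hence distinct points stay uniformly separated. For \emph{relative density} (every ball of some fixed radius $R$ in $\mathbb{K}_\beta'$ contains a point of the set): here I would use the multiplicative structure. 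Since $\beta$ is a unit in $\mathbb{Z}[\beta^{-1}]$, multiplication by~$\beta$ maps $\mathbb{Z}[\beta^{-1}]$ to itself and acts on $\mathbb{K}_\beta'$ by a linear contraction (each conjugate $|\beta^{(\mathfrak{p})}|_\mathfrak{p} < 1$ for $\mathfrak{p} \in S'$, and also $|\beta|_\mathfrak{p}<1$ at the finite primes dividing~$(\beta)$, since $\mathfrak{p}_1$ carries the expanding direction $|\beta|_{\mathfrak{p}_1}=\beta>1$ and the product formula forces the remaining factors to contract). Given any target point $\mathbf{w} \in \mathbb{K}_\beta'$, I would first take any $z \in \mathbb{Z}[\beta^{-1}]$ with $\delta'(z)$ within the covering radius of the full lattice of~$\mathbf{w}$; then replace $z$ by $\beta^k z + (\text{correction})$ to force $\pi_1$ into~$X$ while using the contraction to keep $\delta'(\beta^k z)$ near $\beta^k \delta'(z)$ — more cleanly, one shows that the set $\{\delta'(z) : z \in \mathbb{Z}[\beta^{-1}], \pi_1(\delta(z)) \in X\}$ is relatively dense by noting that for any bounded interval of positive length $X$, and any $n$, the set $\beta^{-n}(\mathbb{Z}[\beta]\cap\beta^n X)$ exhausts more and more of $\mathbb{Z}[\beta^{-1}]\cap X$, while its image under $\delta'$ fills $\mathbb{K}_\beta'$ up to error going to zero, because $\delta(\mathbb{Z}[\beta])$ is a lattice in the stripe space $Z$ (this is exactly $\delta(\mathbb{Z}[\beta]) + \mathscr{X} = Z$ from~\eqref{e:covs}, or can be derived as above) and scaling by $\beta^n$ in the expanding coordinate stretches~$X$ to cover a fundamental domain.

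The main obstacle I anticipate is the relative-density half of the Delone property: unlike discreteness, which follows formally from the lattice structure of $\delta(\mathbb{Z}[\beta^{-1}])$ together with boundedness of~$X$, relative density requires genuinely using that $\beta$ is a unit and that its action contracts $\mathbb{K}_\beta'$, so that points of $\mathbb{Z}[\beta^{-1}]$ lying in the fixed window~$X$ are nevertheless numerous enough (after rescaling by powers of~$\beta$) to have dense projections. The cleanest route is probably to invoke $\delta(\mathbb{Z}[\beta]) + \mathscr{X} = Z$ from Theorem~\ref{th:tprop} and the fact that $\pi'(\overline{\mathscr{X}})$ is bounded in $\mathbb{K}_\beta'$, so that $\delta'(\mathbb{Z}[\beta]\cap X)$ is already relatively dense for $X = [0,1)$; a general bounded $X$ with non-empty interior contains an interval $\beta^{-n}[0,1) + c$ for suitable $n$ and $c \in \mathbb{Z}[\beta^{-1}]$, and translating/rescaling by the unit~$\beta^{-n}$ transports the relative density there, at the cost of a bounded multiplicative distortion of the metric on $\mathbb{K}_\beta'$. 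I would organize the final write-up around that reduction rather than around an explicit contraction-mapping argument.
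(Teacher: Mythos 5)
Your proof of the first statement coincides with the paper's: $\mathbb{Z}[\beta^{-1}]$ has finite index in the ring of $S$-integers $\mathcal{O}_S = \mathcal{O}[\beta^{-1}]$, whose diagonal image is a lattice in $\mathbb{K}_\beta$ by the approximation theorem, and a finite-index subgroup of a lattice is a lattice. That half is fine. For the second statement the paper simply observes that $\delta'(\mathbb{Z}[\beta^{-1}]\cap X)$ is a model set and cites the general theory; you attempt to re-derive the two Delone properties, and the uniform-discreteness half is essentially correct (one small repair: if $\delta'(x)-\delta'(y)\to 0$ with $|x-y|\le\operatorname{diam}X$, the points $\delta(x-y)$ are not ``arbitrarily close to $0$ in $\mathbb{K}_\beta$'' but merely confined to a compact slab $\{|\pi_1|\le\operatorname{diam}X\}\times\overline{B(0,\varepsilon)}$; the contradiction comes from the fact that this slab contains only finitely many lattice points, none of which has vanishing $\pi'$-part except $0$).

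The genuine gap is in the relative-density half. Your declared ``cleanest route'' is to invoke $\delta(\mathbb{Z}[\beta^{-1}])+\mathscr{X}=\mathbb{K}_\beta$ (resp.\ $\delta(\mathbb{Z}[\beta])+\mathscr{X}=Z$) from \eqref{e:covs}, but those identities are part of Theorem~\ref{th:tprop}, which is proved in Section~\ref{sec:proof-theor-refth:tp} \emph{using} Lemma~\ref{l:del} (e.g.\ the finiteness of $\bigcup_n\mathscr{T}^{-n}(\delta(x))$ and the step from $\overline{\delta(\mathbb{Q}(\beta))}$ to $\mathbb{K}_\beta$ both appeal to the lattice property). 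Using them here is circular. Your fallback — rescaling $\beta^{-n}(\mathbb{Z}[\beta]\cap\beta^nX)$ and letting the contraction ``fill $\mathbb{K}'_\beta$ up to error going to zero'' — is the right intuition but, as written, the claim that the image fills $\mathbb{K}'_\beta$ is precisely the relative density you are trying to prove (and one must also track that $\delta'(\beta^{-n}\mathbb{Z}[\beta])$ lives in $\beta^{-n}Z'$, whose non-Archimedean part only exhausts $\mathbb{K}_\mathrm{f}$ in the limit). The standard non-circular argument uses only what you already have: take a compact fundamental domain $F$ for the lattice $\delta(\mathbb{Z}[\beta^{-1}])$, an interval $(x_0-\varepsilon,x_0+\varepsilon)\subseteq X$, and finitely many $\ell_1,\dots,\ell_N\in\mathbb{Z}[\beta^{-1}]$ whose values $\varepsilon$-cover $[-D,D]$ with $D=\sup|\pi_1(F)|$ (possible because $\mathbb{Z}[\beta^{-1}]$ is dense in $\mathbb{R}$); then for any $\mathbf{w}\in\mathbb{K}'_\beta$ write $(x_0,\mathbf{w})\in\delta(z)+F$ and correct $z$ by a suitable $\ell_i$ to land its $\pi_1$-coordinate in the window while moving $\delta'(z)$ by a bounded amount. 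Supplying this (or simply citing the model-set literature, as the paper does) closes the argument.
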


\begin{proof}
The first statement is a direct consequence of the fact that $\mathbb{Z}[\beta^{-1}]$ is a finite index subgroup of the ring of $S$-integers $\mathcal{O}_S=\mathcal{O}[\beta^{-1}]$, which is a lattice in $\mathbb{K}_\beta$ by the approximation theorem for number fields; see e.g.\ \cite{Weil:95}. 
For the second statement, note that $\delta'(\mathbb{Z}[\beta^{-1}]\cap X)$ is a model set.
We refer to \cite[Lemma~3.5]{Minervino-Thuswaldner} and \cite[Lemma~4.3]{Kalle-Steiner:12} for more details.
\end{proof}

\begin{lemma} \label{l:Llat}
The set $\delta(\mathbb{Z}[\beta])$ is a lattice in~$Z$.
If \eqref{QM} holds, then $\delta'(L)$ is a lattice in~$Z'$.
\end{lemma}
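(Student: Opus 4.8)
The first statement follows essentially from Lemma~\ref{l:del}. Indeed, $\mathbb{Z}[\beta]$ is a finite-index subgroup of $\mathbb{Z}[\beta^{-1}]$ (this is part of the setup: both have the same rank $\deg(\beta)$, and $\mathbb{Z}[\beta]\subseteq\mathbb{Z}[\beta^{-1}]$), hence $\delta(\mathbb{Z}[\beta])$ is a finite-index subgroup of the lattice $\delta(\mathbb{Z}[\beta^{-1}])$ in $\mathbb{K}_\beta$. A finite-index subgroup of a lattice is a lattice — but here we need it to be a lattice in the \emph{stripe space} $Z = \mathbb{K}_\infty\times\overline{\delta_\mathrm{f}(\mathbb{Z}[\beta])}$, which is a closed subgroup of $\mathbb{K}_\beta$, not all of $\mathbb{K}_\beta$. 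The plan is: first, $\delta(\mathbb{Z}[\beta])\subseteq Z$ since the $\mathbb{K}_\mathrm{f}$-component of any $\delta(x)$, $x\in\mathbb{Z}[\beta]$, lies in $\delta_\mathrm{f}(\mathbb{Z}[\beta])\subseteq\overline{\delta_\mathrm{f}(\mathbb{Z}[\beta])}$. Second, $\delta(\mathbb{Z}[\beta])$ is discrete in $Z$ (it is discrete in $\mathbb{K}_\beta$, being a subset of the lattice $\delta(\mathbb{Z}[\beta^{-1}])$, and the subspace topology on $Z$ from $\mathbb{K}_\beta$ is the relevant one). Third, $Z/\delta(\mathbb{Z}[\beta])$ is compact: here one uses that $\delta_\infty(\mathbb{Z}[\beta])$ is cocompact in $\mathbb{K}_\infty$ (since $\mathbb{Z}[\beta]$ has full rank $\deg(\beta)$ as a $\mathbb{Z}$-module and $\delta_\infty$ is the Minkowski embedding), while $\overline{\delta_\mathrm{f}(\mathbb{Z}[\beta])}$ is compact by construction. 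Combining the archimedean and finite parts, $Z/\delta(\mathbb{Z}[\beta])$ is a quotient of $\big(\mathbb{K}_\infty/\delta_\infty(\mathbb{Z}[\beta])\big)\times\overline{\delta_\mathrm{f}(\mathbb{Z}[\beta])}$, hence compact. (Alternatively, invoke that $\mathbb{Z}[\beta]$ is a finite-index submodule of $\mathcal{O}_S$ and adapt the proof of Lemma~\ref{l:del}, restricting the places to those in $S$ but keeping only the \emph{completion} $\overline{\delta_\mathrm{f}(\mathbb{Z}[\beta])}$ rather than all of $\mathbb{K}_\mathrm{f}$.)

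For the second statement, assume \eqref{QM}, i.e.\ $\mathrm{rank}(L) = \deg(\beta)-1$, where $L = \langle\widehat{V}-\widehat{V}\rangle_\mathbb{Z}\subseteq\mathbb{Z}[\beta]$. First, $\delta'(L)\subseteq Z'$ for the same reason as before, since $L\subseteq\mathbb{Z}[\beta]$ and the finite component of $\delta'$ on $\mathbb{Z}[\beta]$ lands in $\overline{\delta_\mathrm{f}(\mathbb{Z}[\beta])}$. Discreteness of $\delta'(L)$ in $Z'$: one cannot argue as above because $\delta'$ is \emph{not} injective in general on all of $\mathbb{Z}[\beta^{-1}]$ — its kernel on $\mathbb{Q}(\beta)$ corresponds to elements with trivial image at every place in $S'$, i.e.\ the archimedean place $\mathfrak{p}_1$ carries them to $0$ as well in the limit; but on a rank-$(\deg\beta-1)$ subgroup like $L$, if $\delta'$ had nontrivial kernel, an element $x\in L$, $x\ne 0$, with $\delta'(x)=0$ would have all conjugates except $\beta^{(\mathfrak{p}_1)}=\beta$ equal to zero and $v_\mathfrak{p}(x)\ge 0$ for all $\mathfrak p\mid(\beta)$ — impossible for a nonzero algebraic number unless $\deg(\beta)=1$. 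So $\delta'|_L$ is injective, and one shows $\delta'(L)$ is uniformly discrete directly: its preimage in $\mathbb{R}^{\deg\beta-1}\times\overline{\delta_\mathrm{f}(\mathbb{Z}[\beta])}$ is discrete because $L\subseteq\mathbb{Z}[\beta]$ and $\delta'(\mathbb{Z}[\beta])$ is already discrete in $Z'$ (it inherits discreteness from $\delta(\mathbb{Z}[\beta])$ discrete in $Z$ via the projection $\pi'$, which is proper on $Z$ restricted to a set of bounded $\pi_1$-component; but since the $\pi_1$-coordinates of $\delta(L)$ range over a discrete set of $\mathbb{R}$, properness applies patch by patch). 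Cocompactness of $Z'/\delta'(L)$: this is where \eqref{QM} is essential. The archimedean part $\mathbb{K}'_\infty\cong\mathbb{R}^{\deg\beta-1}$ (as a real vector space, ignoring the complex structure for counting dimensions) has real dimension $\deg(\beta)-1$, and $\delta'_\infty(L)$ spans it over $\mathbb{R}$ precisely because $L$ has $\mathbb{Z}$-rank $\deg(\beta)-1$ and the images of a $\mathbb{Z}$-basis of $L$ under $\delta'_\infty$ are $\mathbb{R}$-linearly independent (the Minkowski-type embedding restricted to $S'\cap\infty$ is injective on any subgroup not meeting the kernel, and a rank count shows the image is a full lattice in $\mathbb{K}'_\infty$). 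Hence $\mathbb{K}'_\infty/\delta'_\infty(L)$ is compact, and tensoring with the compact group $\overline{\delta_\mathrm{f}(\mathbb{Z}[\beta])}$ gives compactness of $Z'/\delta'(L)$.

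The main obstacle I expect is the cocompactness in $Z'$ under \eqref{QM}: one must verify carefully that $\delta'_\infty(L)$ is a full-rank lattice in $\mathbb{K}'_\infty$, i.e.\ that the $\deg(\beta)-1$ generators of $L$ map to $\mathbb{R}$-linearly independent vectors in $\mathbb{R}^r\times\mathbb{C}^s$ (where $r+2s=\deg\beta-1$), which amounts to showing the relevant minor of the conjugate matrix is nonsingular — this uses that $L$ is a finite-index subgroup of the kernel of the ``trace-to-$\mathfrak{p}_1$'' type map restricted appropriately, together with a Vandermonde/independence-of-characters argument on the Galois conjugates. Once that linear-algebra fact is in place, the finite-part factor $\overline{\delta_\mathrm{f}(\mathbb{Z}[\beta])}$ contributes only compactness and the discreteness statements follow by combining with the first part of the lemma and with Lemma~\ref{l:del}. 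I would also double-check that the definition of $\delta'$ makes $\delta'(x)$ for $x\in\mathbb{Z}[\beta]$ genuinely land in $\overline{\delta_\mathrm{f}(\mathbb{Z}[\beta])}$ on the finite coordinates (immediate) and that the stripe space $Z'$ is closed under subtraction so that $\delta'(L)$, being a group, fits inside it.
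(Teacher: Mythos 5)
Your main line of argument is the same as the paper's, which disposes of the lemma in two sentences: $\delta_\infty(\mathbb{Z}[\beta])$ and, under \eqref{QM}, $\delta'_\infty(L)$ are Delone subgroups (full\nobreakdash-rank lattices) of $\mathbb{K}_\infty$ and $\mathbb{K}'_\infty$ respectively, and since $\overline{\delta_\mathrm{f}(\mathbb{Z}[\beta])}$ is compact, passing to the stripe spaces preserves both uniform discreteness and relative denseness. Your treatment of $\delta(\mathbb{Z}[\beta])$ and your cocompactness discussion for $\delta'(L)$ implement exactly this.

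Two of your auxiliary assertions are false, however. First, $\delta'(\mathbb{Z}[\beta])$ is \emph{not} discrete in $Z'$: a discrete subgroup of $Z'$ has rank at most $\deg(\beta)-1$ (the archimedean part of $Z'$ has real dimension $\deg(\beta)-1$ and the finite part is compact), whereas $\mathbb{Z}[\beta]$ has rank $\deg(\beta)$; in fact $\delta'(\mathbb{Z}[\beta])$ is dense in $Z'$, which is the whole point of the covering \eqref{e:covs}. Second, the $\pi_1$-coordinates of $\delta(L)$ are just the elements of $L$ itself, a subgroup of $\mathbb{R}$ of rank $\deg(\beta)-1$, hence dense in $\mathbb{R}$ as soon as $\deg(\beta)\ge 3$; so the ``patch by patch properness'' argument collapses. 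Neither claim is needed: once $\delta'_\infty(L)$ is known to be a lattice in $\mathbb{K}'_\infty$, discreteness of $\delta'(L)$ in $Z'$ is automatic, since $\delta'(L)$ projects bijectively onto the uniformly discrete set $\delta'_\infty(L)$ and the fibres of $Z'\to\mathbb{K}'_\infty$ are compact.

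The one genuinely nontrivial point --- which you correctly single out, and which the paper also leaves implicit --- is that the images under $\delta'_\infty$ of a $\mathbb{Z}$-basis of $L$ are $\mathbb{R}$-linearly independent, i.e.\ that the real span $W$ of $\delta_\infty(L)$ in $\mathbb{K}_\infty$ does not meet the $\mathfrak{p}_1$-coordinate axis. A clean way to close it: by nondegeneracy of the trace form, the $\mathbb{Q}$-hyperplane $L\otimes\mathbb{Q}$ equals $\{x\in\mathbb{Q}(\beta):\mathrm{Tr}(\alpha x)=0\}$ for some $\alpha\in\mathbb{Q}(\beta)^\times$, so $W$ is the real hyperplane $\big\{v:\sum_{\mathfrak{p}\mid\infty}\alpha^{(\mathfrak{p})}v_\mathfrak{p}=0\big\}$ (with the usual convention at complex places); the $\mathfrak{p}_1$-axis lies in $W$ only if $\alpha^{(\mathfrak{p}_1)}=0$, which is impossible. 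With that supplied, your proof is complete and agrees with the paper's.
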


\begin{proof}
The sets $\delta_\infty(\mathbb{Z}[\beta])$ and, if \eqref{QM} holds, $\delta'_\infty(L)$ are Delone subgroups in $\mathbb{K}_\infty$ and~$\mathbb{K}'_\infty$, respectively.
Since $\overline{\delta_\mathrm{f}(\mathbb{Z}[\beta])}$ is compact, we obtain that $\delta(\mathbb{Z}[\beta])$ and $\delta'(L)$ are Delone subgroups in $Z$ and~$Z'$, respectively.
\end{proof}

\begin{lemma} \label{l:3Nbeta}
Assume that $\deg(\beta) \ge 2$.
For each $y \in \mathbb{Z}[\beta]$, $k \in \mathbb{N}$, $\varepsilon > 0$, we have
\[
\frac{\# \big\{x \in \big(y + \beta^k\mathbb{Z}[\beta]\big) \cap [0,1):\, \delta'_\infty(x) \in X\big\}}{\# \big\{x \in \mathbb{Z}[\beta] \cap [0,1): \delta'_\infty(x) \in X\big\}}\le \frac{2+\varepsilon}{|N(\beta)|^k}
\]
for each rectangle $X \subseteq \mathbb{K}'_\infty$ with sufficiently large side lengths.
\end{lemma}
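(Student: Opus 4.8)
The plan is to count the relevant points via the lattice structure provided by Lemma~\ref{l:Llat} together with the density of lattice points in large rectangles. First I would fix $y \in \mathbb{Z}[\beta]$, $k \in \mathbb{N}$, and $\varepsilon > 0$, and observe that the denominator $\#\{x \in \mathbb{Z}[\beta]\cap[0,1):\ \delta'_\infty(x)\in X\}$ should be estimated from below and the numerator from above, both in terms of the covolume of the relevant lattices and the area of~$X$. The key point is that $\delta(\mathbb{Z}[\beta])$ is a lattice in $Z = \mathbb{K}_\infty \times \overline{\delta_\mathrm{f}(\mathbb{Z}[\beta])}$; projecting to $\mathbb{K}_\infty$, the image $\delta_\infty(\mathbb{Z}[\beta])$ need not be discrete (when $\deg(\beta)\ge2$ and $\beta$ is not a unit), but the condition $x\in[0,1)$ pins down the first coordinate $\pi_1(\delta(x)) = x \in [0,1)$, so that $\{x\in\mathbb{Z}[\beta]\cap[0,1):\ \delta'_\infty(x)\in X\}$ is in bijection with the set of lattice points of $\delta(\mathbb{Z}[\beta])$ lying in the bounded region $[0,1)\times X \times \overline{\delta_\mathrm{f}(\mathbb{Z}[\beta])}$ of~$Z$. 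Since this region is (up to the compact fibre factor) a box in $\mathbb{K}_\infty$, a standard lattice-point count gives that its cardinality is $\mathrm{vol}(X)/\mathrm{covol} + O(\partial X)$, hence for rectangles $X$ with large side lengths it is $(1+o(1))\,\mathrm{vol}(X)/\mathrm{covol}(\delta(\mathbb{Z}[\beta]),Z)$, where the $o(1)$ is uniform once the minimal side length exceeds a threshold.

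Next I would treat the numerator similarly. The set $y+\beta^k\mathbb{Z}[\beta]$ is a coset of the sublattice corresponding to $\beta^k\mathbb{Z}[\beta]$ inside $\mathbb{Z}[\beta]$. The index $[\mathbb{Z}[\beta]:\beta^k\mathbb{Z}[\beta]]$ equals $|N(\beta)|^k$, since multiplication by $\beta^k$ on $\mathbb{Z}[\beta]$ (a free $\mathbb{Z}$-module of rank $\deg(\beta)$) has determinant $N(\beta)^k$. Correspondingly, $\delta(\beta^k\mathbb{Z}[\beta])$ is a sublattice of $\delta(\mathbb{Z}[\beta])$ of index $|N(\beta)|^k$ in~$Z$, so its covolume is $|N(\beta)|^k$ times larger. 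The delicate point is that the first coordinate $x\in[0,1)$ is again fixed, so counting $\{x\in(y+\beta^k\mathbb{Z}[\beta])\cap[0,1):\ \delta'_\infty(x)\in X\}$ amounts to counting points of the coset-lattice $\delta(y+\beta^k\mathbb{Z}[\beta])$ in $[0,1)\times X\times\overline{\delta_\mathrm{f}(\mathbb{Z}[\beta])}$; the lattice-point estimate yields $(1+o(1))\,\mathrm{vol}(X)/\big(|N(\beta)|^k\,\mathrm{covol}(\delta(\mathbb{Z}[\beta]),Z)\big)$ from above, once $X$ has large side lengths. Dividing, the two volume factors and covolumes cancel up to a factor $|N(\beta)|^{-k}$, and the error terms combine into the extra $\varepsilon$.

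I should be careful about the factor $2$ in the statement rather than $1$: this slack presumably absorbs two sources of loss, namely that the fibre $[0,1)$ in the first coordinate is half-open and does not neatly contain a fundamental domain of the projected lattice, and that the compact factor $\overline{\delta_\mathrm{f}(\mathbb{Z}[\beta])}$ may have positive measure while the ``counting'' really happens in a lower-dimensional slice $\mathbb{K}_\infty\times\{0\}$ (cf.~\eqref{e:Xint} and~\eqref{e:Scov}); using a crude outer/inner approximation of the region by boxes in $\mathbb{K}_\infty$ costs at most a factor $2$ when the sides of~$X$ are large. I would make this precise by choosing the threshold on the side lengths of~$X$ large enough (depending on $\beta$, $k$, $\varepsilon$) that: (i) the denominator is at least $(1-\varepsilon')\,\mathrm{vol}(X)/C$ for the appropriate constant $C$, and (ii) the numerator is at most $(2-1+\varepsilon')\,\mathrm{vol}(X)/(|N(\beta)|^k\,C)$ — actually $(1+\varepsilon')/(|N(\beta)|^k)$ times the main term of the denominator, with the factor $2$ giving comfortable room. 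The main obstacle, then, is not the lattice counting itself (which is routine Minkowski-type geometry once the right lattices are identified via Lemma~\ref{l:Llat}) but the bookkeeping needed to convert a count in the full space $Z$ — where $\delta(\mathbb{Z}[\beta])$ genuinely is a lattice — into a count over points whose first coordinate lies in the fixed interval $[0,1)$, keeping the implied constants uniform in $X$; the factor $2$ is exactly the budget allocated for doing this bookkeeping bluntly rather than sharply.
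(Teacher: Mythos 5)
Your approach is genuinely different from the paper's, and as written it has a gap at its central step. You invoke ``a standard lattice-point count'' to claim that the number of points of $\delta(\mathbb{Z}[\beta])$ in the region $[0,1)\times X\times\overline{\delta_\mathrm{f}(\mathbb{Z}[\beta])}$ is $\mathrm{vol}(X)/\mathrm{covol}+O(\partial X)$. That estimate is valid for regions that grow in \emph{all} directions relative to the lattice, but your region is a slab: the first coordinate is pinned to the fixed interval $[0,1)$ and never grows. For slabs the volume heuristic can fail by an arbitrary factor, not a factor of $2$: already for $\Lambda=\mathbb{Z}^2$ and $R=[\tfrac14,\tfrac34)\times[0,T)$ one has $\mathrm{vol}(R)=T/2$ but $\#(\Lambda\cap R)=0$, while $[0,\tfrac12)\times[0,T)$ contains $T$ points; the boundary term is of the same order as the main term, so the ``standard count'' gives nothing. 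What rescues the situation here is that $\mathbb{Z}[\beta]$ projects \emph{densely} (not discretely) onto the thin direction, so that the count is governed by the uniform density of a regular model set (equivalently, by an equidistribution/unique-ergodicity argument for the window $[0,1)\times\overline{\delta_\mathrm{f}(\mathbb{Z}[\beta])}$). If you are willing to quote the uniform-density theorem for regular model sets, your plan goes through — indeed it would give the sharper constant $1+\varepsilon$ in place of $2+\varepsilon$, since all $|N(\beta)|^k$ cosets have windows of equal Haar measure and hence equal density. But that theorem is exactly the nontrivial input you would have to supply; neither the slab issue nor the equality of the coset densities is addressed by your ``bookkeeping'' remarks, and the factor $2$ is not a budget that covers them.

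The paper avoids all of this with an elementary coset-comparison argument that explains where the $2$ really comes from. Writing $C(y)$ for the count attached to the coset $y+\beta^k\mathbb{Z}[\beta]$ and taking two representatives $y<\tilde y$ in $[0,1)$, one picks $z\in\mathbb{Z}[\beta]$ with $\tilde y-y\le\beta^kz\le 1$ (possible since $\beta^k\mathbb{Z}[\beta]$ is dense in $\mathbb{R}$) and maps the coset of $\tilde y$ into the coset of $y$ by translating the part lying in $[\tilde y-y,1)$ by $y-\tilde y$ and the part lying in $[0,\tilde y-y)$ by $\beta^kz+y-\tilde y$; both images land back in $[0,1)$. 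The splitting of $[0,1)$ into two pieces is the source of the factor $2$, and the Delone property of $\delta'_\infty(\mathbb{Z}[\beta]\cap[0,1))$ (Lemma~\ref{l:del}) controls the points that exit the rectangle $X$ under these translations, contributing the $\varepsilon$. This yields $C(y)\ge\frac{1}{2+\varepsilon}\max_{\tilde y}C(\tilde y)$ for every $y$, and summing over the $|N(\beta)|^k$ cosets gives the lemma with no density or covolume computation at all. If you want to keep your route, you must replace the Minkowski-type count by a genuine equidistribution statement for the model set and for each of its coset sub-model-sets, with uniformity in the side lengths of $X$.
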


\begin{proof}
Let $k \in \mathbb{N}$, choose a set of representatives $Y$ of $\mathbb{Z}[\beta]/\beta^k\mathbb{Z}[\beta]$ with $Y \subseteq [0,1)$, and set
\[
C(y) = \# \big\{x \in \big(y + \beta^k\mathbb{Z}[\beta]\big) \cap [0,1):\, \delta'_\infty(x) \in X\big\}.
\]
For $y, \tilde{y} \in Y$ with $y < \tilde{y}$, choose $z \in \mathbb{Z}[\beta]$ with $\tilde{y}-y \le \beta^k z \le 1$. 
(This is possible because $\beta^k \mathbb{Z}[\beta]$ is dense in~$\mathbb{R}$ by the irrationality of~$\beta$.)
Then 
\[
\big(y + \beta^k \mathbb{Z}[\beta]\big) \cap [0,1) \supseteq \Big( \big(\tilde{y} + \beta^k \mathbb{Z}[\beta]\big) \cap [\tilde{y}-y,1) + y - \tilde{y}\Big)  \cup \Big( \big(\tilde{y} + \beta^k \mathbb{Z}[\beta]\big) \cap [0,\tilde{y}-y) + \beta^k z + y  - \tilde{y}\Big),
\]
which implies the two inequalities
\begin{align*}
C(y) & \ge \# \big\{x \in \big(\tilde{y} + \beta^k\mathbb{Z}[\beta]\big) \cap [\tilde{y}-y,1):\, \delta'_\infty(x) \in X\big\} \\
& \quad - \# \big\{x \in \big(\tilde{y} + \beta^k\mathbb{Z}[\beta]\big) \cap [\tilde{y}-y,1):\, \delta'_\infty(x) \in X,\,  \delta'_\infty(x+y-\tilde{y}) \notin X\big\}, \\
C(y) & \ge \# \big\{x \in \big(\tilde{y} + \beta^k\mathbb{Z}[\beta]\big) \cap [0,\tilde{y}-y):\, \delta'_\infty(x) \in X\big\} \\
& \quad - \# \big\{x \in \big(\tilde{y} + \beta^k\mathbb{Z}[\beta]\big) \cap [0,\tilde{y}-y):\, \delta'_\infty(x) \in X,\,  \delta'_\infty(x+\beta^kz+y-\tilde{y}) \notin X\big\}.
\end{align*}
Since $\delta'_\infty(\mathbb{Z}[\beta] \cap [0,1))$ is a Delone set by Lemma~\ref{l:del}, the subtracted quantities are small compared to~$C(\tilde{y})$, provided that $(X + \delta'_\infty(y-\tilde{y})) \setminus X$  and $(X + \delta'_\infty(\beta^k z+y-\tilde{y})) \setminus X$ are small compared to~$X$.
If $X$ is a rectangle with sufficiently large side lengths, we have thus 
\[
2\, C(y) \ge \frac{C(\tilde{y})}{1+\varepsilon/2}.
\]
Similar arguments provide the same inequality for $y > \tilde{y}$, thus
\[
C(y) \ge \frac{1}{2+\varepsilon} \max_{\tilde{y}\in Y} C(\tilde{y}) 
\]
for all $y \in Y$. 
Summing over~$Y$ gives that
\[
\# \big\{x \in \mathbb{Z}[\beta] \cap [0,1):\, \delta'_\infty(x) \in X\big\} \ge \frac{|N(\beta)|^k}{2+\varepsilon} \max_{\tilde{y}\in Y} C(\tilde{y}),
\]
which proves the lemma.
\end{proof}

\section{Properties of Rauzy fractals and the natural extension}  \label{sec:prop-rauzy-fract}

\subsection{Proof of Theorem~\ref{th:tprop}} \label{sec:proof-theor-refth:tp}
Let $x \in \mathbb{Z}[\beta^{-1}] \cap [0,1)$. 
Each element of~$\mathcal{R}(x)$ is the limit of elements of $\delta'(\beta^k\,T^{-k}(x))$ and hence of the form 
\[
\mathbf{z} = \lim_{k\to\infty} \delta'\bigg(\sum_{j=0}^{k-1} a_j \beta^j + x\bigg) = \delta'(x) + \sum_{j=0}^\infty \delta'(a_j\beta^j),
\]
with $a_j \in \mathcal{A}$ and $\sum_{j=0}^{k-1} a_j \beta^j + x \in [0,\beta^k)$ for all $k \in \mathbb{N}$. 
Thus our definition of the $x$-tiles is essentially the same as in the other papers on this topic. 
In particular, $\mathcal{R}(x)$ is a compact set with uniformly bounded diameter.

\medskip
The set equation in Theorem~\ref{th:tprop}~(\ref{i:t13}) is a direct consequence of our definition.

\medskip
If $0 \le y \le x$, then we have 
\[
\beta^k\,T^{-k}(x) - x \subseteq \beta^k\,T^{-k}(y) - y
\]
for all $k \in \mathbb{N}$, which proves Theorem~\ref{th:tprop}~(\ref{i:t14}).
Let now $v \in V$ with $\widehat{v} > x$.
To see that 
\[
\beta^k\,T^{-k}(x) - x \supseteq \beta^k\,T^{-k}(v) - v
\] 
for all $k \in \mathbb{N}$, suppose that there exists $z \in (\beta^k\,T^{-k}(v) - v) \setminus (\beta^k\,T^{-k}(x) - x)$, and assume that $k$ is minimal such that this set is non-empty. 
Then we have $z + v < \beta^k \le z + x$. 
Since $T^j(\frac{z+v}{\beta^k}) \in T^{j-k}(v)$ for $1 \le j \le k$, the minimality of $k$ gives that $\beta^{k-j} T^j(\frac{z+v}{\beta^k}) + x - v \in \beta^{k-j}\,T^{j-k}(x)$, thus $\beta^{k-j} T^j(\frac{z+v}{\beta^k}) + \beta^k - z - v < \beta^{k-j}$, which implies that $T^j(1^-) = \beta^j + T^j(\frac{z+v}{\beta^k}) - \frac{z+v}{\beta^{k-j}}$. 
Hence $T^k(1^-) = \beta^k - z$ and thus $v < T^k(1^-) \le x$, which contradicts the assumption that $\widehat{v} > x$.
This proves Theorem~\ref{th:tprop}~(\ref{i:t15}).
In particular, we have that 
\begin{equation} \label{e:equal}
\mathcal{R}(x) - \delta'(x) = \mathcal{R}(v) - \delta'(v) \quad \mbox{for all} \quad x \in \mathbb{Z}[\beta^{-1}] \cap [v,\widehat{v}),\ v \in V.
\end{equation}

\medskip
We now consider the covering properties of our collections of tiles, following mainly~\cite{Kalle-Steiner:12}.
We start with a short proof of~\eqref{perne}.
Let $x \in [0,1)$ with $T^k(x) = x$. 
Then $x \in \mathbb{Q}(\beta)$ and 
\[
\delta'(0) = \lim_{n\to\infty} \delta'(\beta^{nk} x) \in \lim_{n\to\infty} \delta'(\beta^{nk} T^{-nk}(x)) \in \mathcal{R}(x),
\] 
where we have extended the definition of $\mathcal{R}(x)$ to $x \in \mathbb{Q}(\beta)$. 
If $v \in V$ is such that $x \in [v,\widehat{v})$, then \eqref{e:equal} gives that
\[
\delta(x) \in \{x\} \times (\delta'(x) - \mathcal{R}(x)) = \{x\} \times (\delta'(v) - \mathcal{R}(v)) \subseteq \mathscr{X}.
\]
On the other hand, let $x \in \mathbb{Q}(\beta)$ be such that $\delta(x) \in \mathscr{X}$.
If $q \in \mathbb{Z}$ is such that $x \in \frac{1}{q} \mathbb{Z}[\beta^{-1}]$, then we have $\mathscr{T}^{-n}(\delta(x)) \subseteq \delta(\frac{1}{q} \mathbb{Z}[\beta^{-1}])$ for all $n \in \mathbb{N}$. 
Since $\delta(\frac{1}{q} \mathbb{Z}[\beta^{-1}])$ is a lattice in~$\mathbb{K}_\beta$ by Lemma~\ref{l:del} and $\mathscr{X}$ is bounded, the set $\bigcup_{n\in\mathbb{N}} \mathscr{T}^{-n}(\delta(x))$ is finite, thus $\mathscr{T}^k(\delta(z)) = \delta(z)$ for some $k \ge 1$, $z \in \mathbb{Q}(\beta) \cap [0,1)$, with $\delta(z) \in \mathscr{T}^{-n}(\delta(x))$, $n > k$.
Since $\mathscr{T}(\delta(z)) = \delta(T(z))$, we obtain that $T^k(z) = z$ and $T^n(z) = x$, thus $T^k(x) = x$.

\medskip
Now, we use~\eqref{perne} to prove that $\delta(\mathbb{Z}[\beta^{-1}]) + \mathscr{X} = \mathbb{K}_\beta$.
We first show that 
\begin{equation} \label{e:deltaQ}
\delta(\mathbb{Q}(\beta)) \subseteq \delta(\mathbb{Z}[\beta^{-1}]) + \mathscr{X}.
\end{equation} 
Let $x \in \mathbb{Q}(\beta)$.
Then the sequence $(\beta^k x \bmod \mathbb{Z}[\beta^{-1}])_{k\in\mathbb{Z}}$ is periodic; indeed, choosing $q \in \mathbb{Z}$ such that $x \in \frac{1}{q} \mathbb{Z}[\beta^{-1}]$, we have $\beta^k x \in \frac{1}{q} \mathbb{Z}[\beta^{-1}]$ for all $k \in \mathbb{Z}$, and the periodicity of $(\beta^k x \bmod \mathbb{Z}[\beta^{-1}])_{k\in\mathbb{Z}}$ follows from the finiteness of $\frac{1}{q} \mathbb{Z}[\beta^{-1}] / \mathbb{Z}[\beta^{-1}]$.
By \eqref{persch} and~\eqref{perne}, we have $\delta(T^k(x-\lfloor x\rfloor)) \in \mathscr{X}$ for all sufficiently large $k \in \mathbb{N}$. 
Thus we can choose $k \in \mathbb{N}$ such that $\beta^k x \equiv x \mod \mathbb{Z}[\beta^{-1}]$ and $\delta(T^k(x-\lfloor x\rfloor)) \in \mathscr{X}$.
As $T^k(x-\lfloor x\rfloor) \equiv \beta^k x \mod \mathbb{Z}[\beta^{-1}]$, we obtain that $\delta(x) \in \mathscr{X} + \delta(\mathbb{Z}[\beta^{-1}])$, i.e., \eqref{e:deltaQ} holds.

Since $\overline{\mathscr{X}}$ is compact and $\delta(\mathbb{Z}[\beta^{-1}])$ is a lattice, \eqref{e:deltaQ} implies that 
\[
\delta(\mathbb{Z}[\beta^{-1}]) + \overline{\mathscr{X}} = \overline{\delta(\mathbb{Q}(\beta))} = \mathbb{K}_\beta.
\] 
Observe that $\mathscr{X}$ differs only slightly from its closure:
\begin{equation} \label{e:diffclosX}
\overline{\mathscr{X}} \setminus \mathscr{X} = \bigcup_{v\in V} \Big(\{\widehat{v}\} \times \big(\delta'(v) - \mathcal{R}(v)\big) \setminus \big(\delta'(\widehat{v}) - \mathcal{R}(\widehat{v})\big) \Big),
\end{equation}
where $\mathcal{R}(1) = \emptyset$.
As $\mathscr{X}$ is a finite union of products of a left-closed interval with a compact set, the complement of $\delta(\mathbb{Z}[\beta^{-1}]) + \mathscr{X}$ in $\mathbb{K}_\beta$ is either empty or has positive measure.
Since $\mu(\overline{\mathscr{X}} \setminus \mathscr{X}) = 0$, we obtain that
\begin{equation} \label{e:natextcov}
\delta(\mathbb{Z}[\beta^{-1}]) + \mathscr{X} = \mathbb{K}_\beta.
\end{equation}
Since $\mathcal{R}(v) -\delta(v) \subseteq \overline{\delta'(\mathbb{Z}[\beta])}$ for all $v \in V$, we have $\mathscr{X} \subseteq Z$.
By Lemma~\ref{l:betaZ}, we have thus $(\delta(x) + \mathscr{X}) \cap Z = \emptyset$ for all $x \in \mathbb{Z}[\beta^{-1}] \setminus \mathbb{Z}[\beta]$.
Together with~\eqref{e:natextcov}, this implies that 
\[
\delta(\mathbb{Z}[\beta]) + \mathscr{X} = Z.
\]

\medskip
For each $x \in \mathbb{Z}[\beta^{-1}] \cap [v,\widehat{v})$, $v \in V$, we have
\begin{equation} \label{e:Xcapx}
\mathscr{X} \cap \{x\} \times \mathbb{K}'_\beta = \{x\} \times \big(\delta'(v) - \mathcal{R}(v)\big) = \{x\} \times \big(\delta'(x) - \mathcal{R}(x)\big) = \delta(x) - \{0\}\times \mathcal{R}(x),
\end{equation}
Since $\mathbb{Z}[\beta]$ is dense in~$\mathbb{R}$, we obtain~\eqref{e:nesub}.
Rewriting~\eqref{e:Xcapx}, we get $\big(\delta(x) - \mathscr{X}) \cap \{0\} \times \mathbb{K}'_\beta = \{0\} \times \mathcal{R}(x)$, which shows together with~\eqref{e:natextcov} that
\begin{equation} \label{e:rfcov}
\bigcup_{x\in\mathbb{Z}[\beta^{-1}]\cap[0,1)} \mathcal{R}(x) = \mathbb{K}'_\beta.
\end{equation}
Since $\mathcal{R}(x) \cap Z' = \emptyset$ for all $x \in \mathbb{Z}[\beta^{-1}] \setminus \mathbb{Z}[\beta]$ by Lemma~\ref{l:betaZ}, we have
\begin{equation} \label{e:RcovZ}
\bigcup_{x\in\mathbb{Z}[\beta]\cap[0,1)} \mathcal{R}(x) = Z'.
\end{equation}

\medskip
By~\eqref{e:rfcov} and Baire's theorem, $\mathcal{R}(x)$ has non-empty interior for some $x \in \mathbb{Z}[\beta^{-1}] \cap [0,1)$. 
Using the set equations in Theorem~\ref{th:tprop}~(\ref{i:t13}) and \eqref{e:equal}, we obtain that $\mathcal{R}(x)$ has non-empty interior for all $x \in \mathbb{Z}[\beta^{-1}] \cap [0,1)$. 
Consequently, the set equations also imply that $\mathcal{R}(x)$ is the closure of its interior; see e.g.\ \cite{Kalle-Steiner:12} for more details.
This proves Theorem~\ref{th:tprop}~(\ref{i:t11}).

\medskip
For the proof of Theorem~\ref{th:tprop}~(\ref{i:t12}), we follow again \cite{Kalle-Steiner:12} and prove first that $\mathscr{T}$ is bijective up to a set of measure zero.
First note that $\mathscr{T}(\mathscr{X}) = \mathscr{X}$.
Partitioning~$\mathscr{X}$ into the sets
\[
\mathscr{X}_a = \{ \mathbf{z} \in \mathscr{X} : \lfloor \beta\, \pi_1(\mathbf{z}) \rfloor = a\} \qquad (a \in \mathcal{A}), 
\]
we have $\mathscr{T}(\mathbf{z}) = \beta\, \mathbf{z} - \delta(a)$ for all $\mathbf{z} \in \mathscr{X}_a$.
Thus $\mathscr{T}$ is injective on each~$\mathscr{X}_a$, $a \in \mathcal{A}$, and
\[  
\sum_{a\in\mathcal{A}} \mu\big(\mathscr{T}(\mathscr{X}_a)\big) = \sum_{a\in\mathcal{A}} \mu(\beta\, \mathscr{X}_a) = \sum_{a\in\mathcal{A}} \mu(\mathscr{X}_a) = \mu(\mathscr{X}) = \mu\big(\mathscr{T}(\mathscr{X})\big) = \mu\bigg( \bigcup_{a\in\mathcal{A}} \mathscr{T}(\mathscr{X}_a) \bigg),
\]
where the second equality holds by the product formula $\prod_{\mathfrak{p}\in S}|\beta|_\mathfrak{p} = 1$.
Hence $\mu(\mathscr{T}(\mathscr{X}_a) \cap \mathscr{T}(\mathscr{X}_b)) = 0$ for all $a,b\in \mathcal{A}$ with $a\neq b$, and $\mathscr{T}$ is bijective up to a set of measure zero.

For each $x \in \mathbb{Z}[\beta^{-1}] \cap [0,1)$ and sufficiently small $\varepsilon > 0$, we have 
\[
\mathscr{T}^{-1}\Big([x,x+\varepsilon) \times \big(\delta'(x) - \mathcal{R}(x)\big)\Big) = \bigcup_{y\in T^{-1}(x)} [y,y+\varepsilon\beta^{-1}) \times \big(\delta'(y) - \mathcal{R}(y)\big).
\]
Since this union is disjoint, the union in Theorem~\ref{th:tprop}~(\ref{i:t13}) is disjoint in Haar measure.
Thus if~$\mathcal{R}(y)$, $y \in T^{-k}(x)$, is in the interior of $\beta^{-k}\, \mathcal{R}(x)$, its boundary has measure zero. 
As $\mathcal{R}(x)$ has non-empty interior and multiplication by $\beta^{-1}$ is expanding on~$\mathbb{K}'_\beta$, we find for each $v \in V$ some $k \in \mathbb{N}$, $y \in T^{-k}(x) \cap [v,\widehat{v})$, such that $\mathcal{R}(y)$ is in the interior of~$\beta^{-k}\, \mathcal{R}(x)$. 
Together with~\eqref{e:equal}, this proves Theorem~\ref{th:tprop}~(\ref{i:t12}), which concludes the proof of Theorem~\ref{th:tprop}.

\subsection{Proof of Theorem~\ref{t:natext}} \label{sec:proof-theor-reft:n}
We have $\pi_1(\mathscr{X}) = [0,1)$, $T \circ \pi_1 = \pi_1 \circ \mathscr{T}$, and we know from Section~\ref{sec:proof-theor-refth:tp} that $\mathscr{T}$ is bijective on~$\mathscr{X}$ up to a set of measure zero.
It remains to show that
\[  
\bigvee_{k\in\mathbb{N}} \mathscr{T}^k \pi_1^{-1}(B) = \mathscr{B},
\]
where $\bigvee_{k\in\mathbb{N}} \mathscr{T}^k \pi_1^{-1}(B)$ is the smallest $\sigma$-algebra containing the $\sigma$-algebras $\mathscr{T}^k \pi_1^{-1}(B)$ for all $k \in \mathbb{N}$. 
It is clear that $\bigvee_{k\in\mathbb{N}} \mathscr{T}^k \pi_1^{-1}(B) \subseteq \mathscr{B}$.
For the other inclusion, we show that we can always separate in $\bigvee_{k\in\mathbb{N}} \mathscr{T}^k \pi_1^{-1}(B)$ two points $\mathbf{z}, \tilde{\mathbf{z}} \in \mathscr{X}$ with $\mathbf{z} \neq \tilde{\mathbf{z}}$.
If $\pi_1(\mathbf{z}) \neq \pi_1(\tilde{\mathbf{z}})$, then there are disjoint intervals $J, \tilde{J} \subseteq [0,1)$ with $\mathbf{z} \in \pi_1^{-1}(J)$, $\tilde{\mathbf{z}} \in \pi_1^{-1}(\tilde{J})$.
If $\pi_1(\mathbf{z}) = \pi_1(\tilde{\mathbf{z}})$, then consider the partition of $[0,1)$ into continuity intervals of~$T^k$ for large~$k$. 
For each continuity interval $J \subseteq [v,\widehat{v})$ of~$T^k$, we have $\mathscr{T}^k \pi_1^{-1}(J) = T^k(J) \times (\delta'(T^k(v)) - \beta^k\, \mathcal{R}(v))$. 
Since multiplication by~$\beta$ is contracting on $\mathbb{K}'_\beta$, we can find $k \in \mathbb{N}$ such that $\mathbf{z} \in \mathscr{T}^k \pi_1^{-1}(J)$, $\tilde{\mathbf{z}} \in \mathscr{T}^k \pi_1^{-1}(\tilde{J})$, with two disjoint intervals $J, \tilde{J} \subseteq [0,1)$.

\section{Properties of integral beta-tiles} \label{sec:prop-integr-beta}

\subsection{Basic properties}
We first prove that, for each $x \in \mathbb{Z}[\beta] \cap [0,1)$, $\mathcal{S}(x)$ is well defined and $\mathcal{S}(x) \ne \emptyset$.
To this end, we show that $(\delta'_\infty(\beta^k\, (T^{-k}(x) \cap \mathbb{Z}[\beta])))_{k\in\mathbb{N}}$ is a Cauchy sequence with respect to the Hausdorff distance~$d_H$. 
Since $\{0,1,\ldots,|N(\beta)|-1\}$ is a complete residue system of $\mathbb{Z}[\beta]/\beta\mathbb{Z}[\beta]$ and $|N(\beta)| \leq \beta$ because $\beta$ is a Pisot number, we have $T^{-1}(y) \cap \mathbb{Z}[\beta] \ne \emptyset$ for all $y \in \mathbb{Z}[\beta] \cap [0,1)$.
Using that $T^{-k-1}(x) \cap \mathbb{Z}[\beta] = T^{-1} (T^{-k}(x) \cap \mathbb{Z}[\beta]) \cap \mathbb{Z}[\beta]$, we get
\[
d_H\Big(\delta'_\infty\big(\beta^{k+1}\, (T^{-k-1}(x) \cap \mathbb{Z}[\beta])\big), \delta'_\infty\big(\beta^k\, (T^{-k}(x) \cap \mathbb{Z}[\beta])\big)\Big) \le (\lceil\beta\rceil-1)\, \|\delta'_\infty(\beta^k)\|,
\]
which tends to~$0$ exponentially fast as $k\to\infty$. 
This proves Theorem~\ref{th:int}~(\ref{i:t31}).

\medskip
Theorem~\ref{th:int}~(\ref{i:t33}) follows directly from the definition. 

\medskip 
To show Theorem~\ref{th:int}~(\ref{i:t34}), let $x,y \in \mathbb{Z}[\beta] \cap [v,\widehat{v})$ with $x-y \in \beta^k\,\mathbb{Z}[\beta]$, $v \in V$. 
We know from the proof of Theorem~\ref{th:tprop} that $\beta^k\, T^{-k}(x) - x = \beta^k\, T^{-k}(y) - y$ and thus
\[
\beta^k \big(T^{-k}(x) \cap \mathbb{Z}[\beta]\big) - x = \beta^k \Big(\big(T^{-k}(y) + \beta^{-k} (x-y)\big) \cap \mathbb{Z}[\beta]\Big) - x = \beta^k \big(T^{-k}(y) \cap \mathbb{Z}[\beta]\big) - y.
\]
Therefore, we have
\begin{multline*}
d_H\big(\mathcal{S}(x) - \delta_\infty'(x), \mathcal{S}(y) - \delta_\infty'(y)\big)  \leq
d_H\Big(\mathcal{S}(x) - \delta_\infty'(x), \delta_\infty'\big(\beta^k \big(T^{-k}(x)\cap\mathbb{Z}[\beta]\big) - x\big) \Big) \\
\hspace{6cm} + d_H\Big(\delta_\infty'\big(\beta^k \big(T^{-k}(y)\cap\mathbb{Z}[\beta]\big) - y\big), \mathcal{S}(y)-\delta_\infty'(y)\Big) \\
\leq 2 \max_{z\in\mathbb{Z}[\beta]\cap[0,1)} d_H\Big(\mathcal{S}(z), \delta_\infty'\big(\beta^k \big(T^{-k}(z) \cap \mathbb{Z}[\beta]\big)\big)\Big) \le 2\, \mathrm{diam}\, \pi'_\infty\big(\beta^k\, \mathcal{R}(0)\big).
\end{multline*}

\subsection{Slices of Rauzy fractals and~$\mathscr{X}$}
An alternative definition of the integral $x$-tile, $x \in \mathbb{Z}[\beta] \cap [0,1)$, could be
\begin{equation} \label{e:RintS}
\mathcal{R}(x) \cap \mathbb{K}'_\infty \times \delta_\mathrm{f}(\{0\}) = \mathcal{S}(x) \times \delta_\mathrm{f}(\{0\}).
\end{equation}
Indeed, the inclusion $\supseteq$ follows from $\lim_{k\to\infty} \delta_\mathrm{f}(\beta^k\, \mathbb{Z}[\beta]) = \delta_\mathrm{f}(\{0\})$.
For the other inclusion, let $\mathbf{z} \in \mathcal{R}(x) \cap \mathbb{K}'_\infty \times \delta_\mathrm{f}(\{0\})$. 
By Theorem~\ref{th:tprop}~(\ref{i:t13}), there is a sequence $(x_k)_{k\in\mathbb{N}}$ with $x_k \in T^{-k}(x)$ , $\mathbf{z} \in \beta^k\, \mathcal{R}(x_k)$ for all $k \in \mathbb{N}$. 
We have $x_k \in \mathbb{Z}[\beta]$, since otherwise we would have $\mathcal{R}(x_k) \cap Z' = \emptyset$ and thus $\beta^k\, \mathcal{R}(x_k) \cap \mathbb{K}'_\infty \times \delta_\mathrm{f}(\{0\}) = \emptyset$ by Lemma~\ref{l:betaZ}.
Therefore, we have $\mathbf{z} \in \mathcal{S}(x) \times \delta_\mathrm{f}(\{0\})$, and~\eqref{e:RintS} holds. 
Since the tiles can be obtained as the intersection with a ``hyperplane'', the equivalent of integral beta-tiles in \cite{Steiner-Thuswaldner} are called \emph{intersective tiles}.

\medskip
Now, the covering property~\eqref{e:Scov} is a direct consequence of~\eqref{e:RcovZ} and~\eqref{e:RintS}.

\medskip
To prove~\eqref{e:xtileint}, let $x \in \mathbb{Z}[\beta] \cap [v,\widehat{v})$, $v \in V$.
Then we have
\begin{multline*}
\mathcal{R}(v) \cap \mathbb{K}'_\infty \times \delta_\mathrm{f}(\{v-x\}) = \big(\mathcal{R}(x) + \delta'(v-x)\big) \cap \mathbb{K}'_\infty \times \delta_\mathrm{f}(\{v-x\}) \\
= \delta'(v-x) + \mathcal{R}(x) \cap \mathbb{K}'_\infty \times \delta_\mathrm{f}(\{0\}) = \delta'(v-x) + \mathcal{S}(x) \times \delta_\mathrm{f}(\{0\}).
\end{multline*}
Since $\delta_\mathrm{f}(\mathbb{Z}[\beta] \cap (v-\widehat{v},0])$ is dense in $\overline{\delta_\mathrm{f}(\mathbb{Z}[\beta])}$ if $\beta$ is irrational and $\mathcal{R}(x)$ is the closure of its interior, we obtain~\eqref{e:xtileint}.

\medskip
Similarly to the $x$-tiles, we can express the natural extension domain by means of integral $x$-tiles.
For each $x \in \mathbb{Z}[\beta] \cap [0,1)$, we have 
\begin{multline*}
\mathscr{X} \cap \{x\} \times \mathbb{K}'_\infty \times \delta_\mathrm{f}(\{x\}) = \{x\} \times \Big( \big(\delta'(x) - \mathcal{R}(x)\big) \cap \mathbb{K}'_\infty \times \delta_\mathrm{f}(\{x\}) \Big) \\
= \delta(x) - \{0\} \times \big( \mathcal{R}(x) \cap \mathbb{K}'_\infty \times \delta_\mathrm{f}(\{0\}) \big) = \delta(x) - \{0\} \times \mathcal{S}(x) \times \delta_\mathrm{f}(\{0\}).
\end{multline*}
By Lemma~\ref{l:sat} and since $\mathbb{Z}[\beta]$ is a subgroup of finite index of~$\mathcal{O}$, the set $\{(x,\delta_\mathrm{f}(x)): x \in \mathbb{Z}[\beta] \cap [0,1)\}$ is dense in $[0,1] \times \overline{\delta_\mathrm{f}(\mathbb{Z}[\beta])}$, provided that $\deg(\beta) \ge 2$.
As $\overline{\mathscr{X}}$ is the closure of its interior, we obtain~\eqref{e:Xint}. 

\subsection{Measure of the boundary}
In the present subsection, we show that $\mu'_\infty(\partial\mathcal{S}(x)) = 0$.
The proof is similar to that of \cite[Theorem~3~(i)]{Steiner-Thuswaldner}.

Let $x \in \mathbb{Z}[\beta] \cap [0,1)$, and $X \subseteq \mathbb{K}'_\infty$ be a rectangle containing~$\mathcal{S}(x)$. 
Since $\beta^{-n}\, \partial\mathcal{S}(x) \subseteq \bigcup_{y\in\mathbb{Z}[\beta]\cap[0,1)} \partial\mathcal{S}(y)$ by Theorem~\ref{th:int}~(\ref{i:t13}), we have
\begin{equation} \label{e:muXB}
\frac{\mu'_\infty(\partial\mathcal{S}(x))}{\mu'_\infty(X)} = \frac{\mu'_\infty(\beta^{-n}\,\partial\mathcal{S}(x))}{\mu'_\infty(\beta^{-n}X)} \le \frac{\mu'_\infty\big(\bigcup_{y\in\mathbb{Z}[\beta]\cap[0,1)} \partial\mathcal{S}(y) \cap \beta^{-n} X\big)}{\mu'_\infty(\beta^{-n} X)}
\end{equation}
for all $n \in \mathbb{N}$. 
To get an upper bound for $\mu'_\infty(\bigcup_{y\in\mathbb{Z}[\beta]\cap[0,1)} \partial\mathcal{S}(y) \cap \beta^{-n} X)$, let 
\begin{equation} \label{e:Rkv}
R_k(v) = \{z \in T^{-k}(v): \beta^k\, \mathcal{R}(z) \cap  \partial\mathcal{R}(v) \neq \emptyset\} \qquad (v \in V).
\end{equation}
Then, for each $y \in \mathbb{Z}[\beta] \cap [v,\widehat{v})$,
\begin{align*}
\partial \mathcal{S}(y) \times \delta_\mathrm{f}(\{0\}) & \subseteq \partial \mathcal{R}(y) \cap \mathbb{K}'_\infty \times \delta_\mathrm{f}(\{0\}) = \big(\partial \mathcal{R}(v) + \delta'(y-v)\big) \cap \mathbb{K}'_\infty \times \delta_\mathrm{f}(\{0\}) \\
& \subseteq \delta'(y-v) + \bigcup_{z\in R_k(v)} \beta^k\, \mathcal{R}(z) \cap \mathbb{K}'_\infty \times \delta_\mathrm{f}(\{v-y\}).
\end{align*}
If $\beta^k\, \mathcal{R}(z) \cap \mathbb{K}'_\infty \times \delta_\mathrm{f}(\{v-y\})$ is non-empty for a given $z \in R_k(v) \subseteq \beta^{-k} \mathbb{Z}[\beta]$, $v \in V$, then $y \in v - \beta^k (z + \mathbb{Z}[\beta])$ by Lemma~\ref{l:betaZ}, thus
\[
\bigcup_{y\in\mathbb{Z}[\beta]\cap[0,1)} \partial\mathcal{S}(y) \subseteq \bigcup_{v\in V} \bigcup_{z\in R_k(v)} \bigcup_{y\in (v-\beta^k z +\beta^k\mathbb{Z}[\beta])\cap[v,\widehat{v})} \big(\delta'_\infty(y-v) + \pi'_\infty(\beta^k\, \mathcal{R}(z))\big).
\]
Setting
\[
C_{k,n,v}(z) = \# \big\{ y\in (v-\beta^k z +\beta^k\mathbb{Z}[\beta])\cap[v,\widehat{v}):\, \delta'_\infty(y-v) \in \beta^{-n} X - \pi'_\infty(\beta^k\, \mathcal{R}(z)) \big\},
\]
we have that
\begin{equation} \label{e:SyRkv}
\mu'_\infty\bigg(\bigcup_{y\in\mathbb{Z}[\beta]\cap[0,1)} \partial\mathcal{S}(y) \cap \beta^{-n} X\bigg) \le \sum_{v\in V} \sum_{z\in R_k(v)} \sum_{y\in C_{k,n,v}(z)} \mu'_\infty\big(\pi'_\infty(\beta^k\, \mathcal{R}(z))\big)\big).
\end{equation}

Now, we estimate the number of terms in the sums in~\eqref{e:SyRkv}.
First, we have
\begin{equation} \label{e:est1}
\# R_k(v) = O(\alpha^k)
\end{equation}
for some $\alpha < \beta$.
Indeed, we can define, similarly to the boundary graph, a directed multiple graph with set of nodes~$V$ and $\# (T^{-1}(v) \cap [w,\widehat{w}))$ edges from $v$ to~$w$, $v,w \in V$.
Then this graph is strongly connected and the number of paths of length~$k$ starting from $v \in V$ is~$\# T^{-k}(v)$, whose order of growth is~$\beta^k$, thus the spectral radius of the graph is~$\beta$.
Since the interior of~$\mathcal{R}(0)$ is non-empty, we have $\beta^m\, \mathcal{R}(z) \cap  \partial\mathcal{R}(0) = \emptyset$ for some $z \in T^{-m}(0)$, $m \in \mathbb{N}$.
Let $p$ be the corresponding path of length~$m$ from~$0$ to~$w$, with $z \in [w,\widehat{w})$. 
Then there is $\alpha < \beta$ such that the number of paths of length~$k$ starting from $v \in V$ that avoid~$p$ is~$O(\alpha^k)$, hence \eqref{e:est1} holds.

From Lemma~\ref{l:3Nbeta}, we obtain that
\begin{equation} \label{e:est2}
\frac{C_{k,n,v}(z)}{\# \big\{x \in \mathbb{Z}[\beta] \cap [0,1):\, \delta'_\infty(x) \in \beta^{-n}\, X\big\}}\le \frac{3}{|N(\beta)|^k}
\end{equation}
for all $z \in \beta^{-k} \mathbb{Z}[\beta]$, $v \in V$, for sufficiently large~$n$.
(The subtraction of $\pi'_\infty(\beta^k\, \mathcal{R}(z))$ in the definition of~$C_{k,n,v}(z)$ is negligible when $n$ is large compared to~$k$.) 
As $\delta'_\infty(\mathbb{Z}[\beta] \cap [0,1))$ is a Delone set, we have
\begin{equation} \label{e:est3}
\# \big\{x \in \mathbb{Z}[\beta] \cap [0,1):\, \delta'_\infty(x) \in \beta^{-n}\, X\big\} = O\big(\mu'_\infty(\beta^{-n}\, X)\big).
\end{equation}

Finally, we use that
\begin{equation} \label{e:est4}
\mu'_\infty\Big(\pi'_\infty\big(\beta^k\, \mathcal{R}(z)\big)\Big) = O\bigg(\frac{|N(\beta)|^k}{\beta^k}\bigg)
\end{equation}
for all $z \in \mathbb{Z}[\beta^{-1}] \cap [0,1)$.
Inserting \eqref{e:est1}--\eqref{e:est4} in~\eqref{e:SyRkv} gives that
\[
\frac{\mu'_\infty\big(\bigcup_{y\in\mathbb{Z}[\beta]\cap[0,1)} \partial\mathcal{S}(y) \cap \beta^{-n} X\big)}{\mu'_\infty(\beta^{-n} X)} \le c\, \frac{\alpha^k}{\beta^k}
\]
for all $k \in \mathbb{N}$ and sufficiently large $n \in \mathbb{N}$, with some constant $c > 0$.
Together with~\eqref{e:muXB}, this implies that $\mu'_\infty(\partial\mathcal{S}(x)) \le c\, \frac{\alpha^k}{\beta^k}\, \mu'_\infty(X)$ for all $k \in \mathbb{N}$, i.e., $\mu'_\infty(\partial\mathcal{S}(x)) = 0$. 
This concludes the proof of Theorem~\ref{th:int}~(\ref{i:t32}).

\subsection{Quadratic Pisot numbers} \label{sec:quadr-pisot-numb}
To prove Theorem~\ref{th:int}~(\ref{i:t35}), let $\beta$ be a quadratic Pisot number, and denote by $z'$ the Galois conjugate of $z \in \mathbb{Q}(\beta)$. 
We show first that
\begin{equation} \label{e:order}
\sgn(x_1'-y_1') = \sgn(\beta')\, \sgn(x'-y')
\end{equation}
for all $x_1 \in T^{-1}(x) \cap \mathbb{Z}[\beta]$, $y_1 \in T^{-1}(y) \cap \mathbb{Z}[\beta]$, with $x, y \in \mathbb{Z}[\beta] \cap [0,1)$.
Writing $x_1 = \frac{a+x}{\beta}$, $y_1 = \frac{b+y}{\beta}$ with $a,b \in \mathcal{A}$, we have $\sgn(x_1'-y_1') = \sgn(\beta')\, \sgn(a-b+x'-y')$.
Write now $x = m\beta - \lfloor m\beta \rfloor$, $y = n \beta - \lfloor n \beta \rfloor$ with $m, n \in \mathbb{Z}$, and assume that $n = m+1$. 
Then 
\[
a-b+x'-y' = a-b + \lfloor (m+1) \beta \rfloor - \lfloor m \beta \rfloor - \beta' \ge a-b + \lfloor \beta \rfloor - \beta'.
\]
Recalling that $|\beta'| < 1$ since $\beta$ is a Pisot number, we obtain that $a-b+x'-y' > 0$ if $b < \lfloor \beta \rfloor$.
If $b = \lfloor \beta \rfloor$, then $y_1 < 1$ implies that $\lfloor \beta \rfloor + (m+1) \beta - \lfloor (m+1) \beta \rfloor < \beta$, thus $\lfloor m \beta \rfloor = \lfloor (m+1) \beta \rfloor - \lfloor \beta \rfloor - 1$, hence we also get that $\sgn(a-b+x'-y') = 1$. 
Since $\sgn(x'-y') = \sgn(n-m)$, we obtain that \eqref{e:order} holds in the case $n = m+1$, and we infer that \eqref{e:order} holds in the general case.

Inductively, we get that $\sgn(x_k' - y_k') = \sgn(\beta')^k\, \sgn(x' - y')$ for all $x_k \in T^{-k}(x) \cap \mathbb{Z}[\beta]$, $y_k \in T^{-k}(y) \cap \mathbb{Z}[\beta]$.
Since $T(\mathbb{Z}[\beta] \cap [0,1)) \subseteq  \mathbb{Z}[\beta] \cap [0,1)$, the sets $T^{-k}(x) \cap \mathbb{Z}[\beta]$, $x \in \mathbb{Z}[\beta] \cap [0,1)$, form a partition of $\mathbb{Z}[\beta] \cap [0,1)$ for each $k \in \mathbb{N}$. 
Renormalizing by $(\beta')^k$ and taking the Hausdorff limit shows that $\mathcal{S}(x)$ is an interval for each $x\in\mathbb{Z}[\beta]\cap[0,1)$ that meets $\bigcup_{y\in\mathbb{Z}[\beta]\cap[0,1)\setminus\{x\}} \mathcal{S}(y)$ only at its endpoints, i.e., $\mathcal{C}_\mathrm{int}$ is a tiling of $\mathbb{K}'_\infty = \mathbb{R}$.

\section{Equivalence between different tiling properties} \label{sec:equiv-betw-diff}

\subsection{Multiple tilings}
We first recall the proof that~$\mathcal{C}_\mathrm{aper}$ is a multiple tiling; see e.g.\ \cite{Ito-Rao:06,Kalle-Steiner:12} for the unit case.
Since $\delta'(\mathbb{Z}[\beta^{-1}]\cap[0,1))$ is a Delone set and $\mathrm{diam}\,\mathcal{R}(x)$ is uniformly bounded, $\mathcal{C}_\mathrm{aper}$~is uniformly locally finite. 
Suppose that $\mathcal{C}_\mathrm{aper}$ is not a multiple tiling.
Then there are integers $m_1 > m_2$ such that some set~$U$ of positive measure is covered at least $m_1$ times by elements of~$\mathcal{C}_\mathrm{aper}$ and some point~$\mathbf{z}$ lies in exactly $m_2$ tiles.
Since the set $\bigcup_{x\in\mathbb{Z}[\beta^{-1}]\cap[0,1)} \partial \mathcal{R}(x)$ has measure zero, we can assume that $U$ is contained in the complement of this set, and we can choose $U$ to be open. 
For small $\varepsilon>0$, the configuration of the tiles in a large neighbourhood of~$\delta'(x)$ does not depend on $x \in \mathbb{Z}[\beta^{-1}] \cap [0,\varepsilon)$.
(In the parlance of \cite{Ito-Rao:06,Kalle-Steiner:12}, the collection~$\mathcal{C}_\mathrm{aper}$ is quasi-periodic.)
Therefore, each element of $\mathbf{z} + \delta'(\mathbb{Z}[\beta^{-1}] \cap [0,\varepsilon))$ lies in exactly $m_2$ tiles.
By the set equations in Theorem~\ref{th:tprop}~(\ref{i:t13}), $\beta^{-k} U$ is covered at least $m_1$ times for all $k \in \mathbb{N}$.
Since $\delta'(\mathbb{Z}[\beta^{-1}] \cap [0,\varepsilon))$ is a Delone set and $\beta^{-k} U$ is arbitarily large, we have a contradiction.

\medskip
As $\tilde{\mathcal{C}}_\mathrm{aper}$ is the restriction of~$\mathcal{C}_\mathrm{aper}$ to~$Z'$, it is a multiple tiling with same covering degree. 

\medskip
The multiple tiling property of~$\mathcal{C}_\mathrm{ext}$ also follows from that of~$\mathcal{C}_\mathrm{aper}$.
Indeed, for $x,y \in \mathbb{Z}[\beta^{-1}]$, we have $(\delta(x) + \mathscr{X}) \cap \{y\} \times \mathbb{K}'_\beta \ne \emptyset$ if and only if $y-x \in [0,1)$.
Since
\[
\big(\delta(x) + \mathscr{X}\big) \cap \{y\} \times \mathbb{K}'_\beta = \{y\} \times \big(\delta'(y) - \mathcal{R}(y-x)) = \delta(y) - \{0\} \times \mathcal{R}(y-x)
\]
if $y-x \in [0,1)$ and $\mathbb{Z}[\beta^{-1}]$ is dense in~$\mathbb{R}$, we obtain that $\mathcal{C}_\mathrm{ext}$ is a multiple tiling with same covering degree as~$\mathcal{C}_\mathrm{aper}$.
Again, $\tilde{\mathcal{C}}_\mathrm{ext}$~being the restriction of~$\mathcal{C}_\mathrm{ext}$ to~$Z$, it is a multiple tiling with same covering degree. 

\medskip
Almost the same proof as for~$\mathcal{C}_\mathrm{aper}$ shows that~$\mathcal{C}_\mathrm{int}$ is a multiple tiling. 
The collection~$\mathcal{C}_\mathrm{int}$ need not be quasi-periodic, but we have ``almost quasi-periodicity'' by Theorem~\ref{th:int}~(\ref{i:t34}). 
If $\mathbf{z} \in \mathbb{K}'_\infty$ lies in exactly $m_2$ tiles, then for large $k \in \mathbb{N}$ each element of $\mathbf{z} + \delta'_\infty(\beta^k \mathbb{Z}[\beta] \cap [0,\varepsilon))$ lies in at most $m_2$ tiles.
As $\delta'_\infty(\beta^k \mathbb{Z}[\beta] \cap [0,\varepsilon))$ is Delone, we get that $\mathcal{C}_\mathrm{int}$ is a multiple~tiling.

\medskip
The relation between $\tilde{\mathcal{C}}_\mathrm{aper}$ and~$\mathcal{C}_\mathrm{int}$ is similar to that between $\mathcal{C}_\mathrm{ext}$ and~$\mathcal{C}_\mathrm{aper}$, as one is obtained from the other by intersection with a suitable ``hyperplane''. 
However, the proof that $\mathcal{C}_\mathrm{int}$ has the same covering degree as~$\tilde{\mathcal{C}}_\mathrm{aper}$ needs a bit more attention than that for~$\mathcal{C}_\mathrm{ext}$.
Let $m$ be the covering degree of~$\tilde{\mathcal{C}}_\mathrm{aper}$ and choose $y_1,\ldots,y_m \in \mathbb{Z}[\beta] \cap [0,1)$ such that the interior~$U$ of $\bigcap_{i=1}^m \mathcal{R}(y_i)$ is non-empty; then $U \cap \mathcal{R}(x) = \emptyset$ for all $x \notin \{y_1,\ldots,y_m\}$. 
Set 
\[
\varepsilon = \min \big\{\widehat{x} - x:\, x \in \mathbb{Z}[\beta] \cap (-1,1),\ \delta'_\infty(x) \in \pi'_\infty\big(U-\mathcal{R}(0)\big) \big\},
\]
with $\widehat{x} = 0$ for $x < 0$.
As $\delta_\mathrm{f}(\mathbb{Z}[\beta] \cap [0,\varepsilon))$ is dense in  $\overline{\delta_\mathrm{f}(\mathbb{Z}[\beta])}$, there exists $z \in \mathbb{Z}[\beta] \cap [0,\varepsilon)$ such that $U \cap \mathbb{K}'_\infty \times \delta_\mathrm{f}(\{-z\}) \ne \emptyset$.
Then the set
\[
\tilde{U} = \delta'_\infty(z) + \pi'_\infty\big(U \cap \mathbb{K}'_\infty \times \delta_\mathrm{f}(\{-z\})\big)
\] 
is open.
If $\mathcal{S}(x+z) \cap\tilde{U} \neq \emptyset$ for $x \in \mathbb{Z}[\beta] \cap [-z,1-z)$, then we get from $\mathcal{S}(x+z) \subseteq \delta'_\infty(x+z) + \pi'_\infty(\mathcal{R}(0))$ that $\delta'_\infty(x+z) \in \tilde{U} - \pi'_\infty(\mathcal{R}(0))$, i.e., $\delta'_\infty(x) \in \pi'_\infty(U-\mathcal{R}(0))$.
Therefore, we have $\widehat{x} - x \geq \varepsilon$, which ensures that $x \in [0,1-z)$ because $x \in [-z,0)$ would mean that $\widehat{x} - x = -x \le z < \varepsilon$.
Now, $x + z < x + \varepsilon \le \widehat{x}$ implies that
\[
\mathcal{R}(x) \cap \mathbb{K}'_\infty \times \delta_\mathrm{f}(\{-z\}) = \big(\mathcal{R}(x+z) - \delta'(z)\big) \cap \mathbb{K}'_\infty \times \delta_\mathrm{f}(\{-z\}) = \mathcal{S}(x+z) \times \delta_\mathrm{f}(\{0\}) - \delta'(z).
\]
From $U \subseteq \mathcal{R}(y_i)$, we obtain that $\pi'_\infty(U \cap \mathbb{K}'_\infty \times \delta_\mathrm{f}(\{-z\})) \subseteq \mathcal{S}(y_i+z) - \delta'_\infty(z)$, thus 
\[
\tilde{U} \subseteq \mathcal{S}(y_i+z)  \quad \mbox{for} \quad 1 \le i \le m.
\]
We have already seen that $\mathcal{S}(x+z) \cap \tilde{U} = \emptyset$ for $x \in \mathbb{Z}[\beta] \cap [-z,0)$.
For $x \in \mathbb{Z}[\beta] \cap {[0,1\!-\!z)} \setminus \{y_1,\ldots,y_m\}$, the disjointness of $\mathcal{S}(x+z)$ and~$\tilde{U}$ follows from $U \cap \mathcal{R}(x) = \emptyset$. 
Thus, $\tilde{U}$~is a set of positive measure that is covered exactly $m$ times, i.e., the covering degree of $\mathcal{C}_\mathrm{int}$~is~$m$.

\medskip
We have proved that the collections $\mathcal{C}_\mathrm{ext}$, $\tilde{\mathcal{C}}_\mathrm{ext}$, $\mathcal{C}_\mathrm{aper}$, $\tilde{\mathcal{C}}_\mathrm{aper}$, and~$\mathcal{C}_\mathrm{int}$ are all multiple tilings with the same covering degree. 
The equivalence of Theorem~\ref{t:tiling} (\ref{i:t41})--(\ref{i:t43}) follows immediately.

\subsection{Property \eqref{W}} \label{sec:property-w}
Several slightly different but equivalent definitions of weak finiteness can be found in \cite{Hollander:96,Akiyama:02,Sidorov:03,Akiyama-Rao-Steiner:04}.
Our definition of~\eqref{W}, which is called~(H) in~\cite{Akiyama-Rao-Steiner:04}, is essentially due to Hollander. 
By~\cite{Akiyama-Rao-Steiner:04}, this property holds for each quadratic Pisot number, for each cubic Pisot unit, as well as for each $\beta > 1$ satisfying $\beta^d = t_1 \beta^{d-1} + t_2 \beta^{d-2} + \cdots + t_{d-1} \beta + t_d$ for some $t_1, \ldots,t_d \in \mathbb{Z}$ with $t_1 > \sum_{k=2}^d|t_k|$.
Other classes of numbers giving tilings and thus satisfying \eqref{W} were found by \cite{Barge-Kwapisz:05,Baker-Barge-Kwapisz:06}.

\medskip
An immediate consequence of~\eqref{perne} is that, for $x \in \mathbb{Z}[\beta^{-1}] \cap [0,1)$, 
\[
\delta'(0) \in\mathcal{R}(x) \quad \mbox{if and only if} \quad  x \in P = \mathrm{Pur}(\beta) \cap \mathbb{Z}[\beta].
\]
(Note that we cannot have $T^n(x) = x$ for $x \in \mathbb{Z}[\beta^{-1}] \setminus \mathbb{Z}[\beta]$.)
For general $z \in \mathbb{Z}[\beta^{-1}] \cap [0,\infty)$, let $n \in \mathbb{N}$ be such that $y + \beta^{-n} z < \widehat{y}$ for all $y \in P$.
Then we have
\begin{equation} \label{e:zinRx}
\delta'(z) \in \mathcal{R}(x) \quad \mbox{if and only if} \quad  x \in T^n(P + \beta^{-n}z).
\end{equation}
To prove~\eqref{e:zinRx}, let first $x = T^n(y + \beta^{-n}z)$, $y \in P$, and let $k \ge 1$ be such that $T^k(y) = y$. 
Then $y + \beta^{-n} z < \widehat{y}$ implies that $T^{n+jk}(y + \beta^{-n-jk}z) = T^n(y + \beta^{-n}z) = x$ for all $j \in \mathbb{N}$; cf.\ the proof of Theorem~\ref{th:tprop}~(\ref{i:t15}) in Section~\ref{sec:proof-theor-refth:tp}.
Thus we have $\delta'(\beta^{n+jk} (y + \beta^{-n-jk}z)) \in \mathcal{R}(x)$, i.e., $\delta'(z) \in \mathcal{R}(x) - \delta'(\beta^{n+jk}y)$. 
Taking the limit for $j \to \infty$, we conclude that $\delta'(z) \in \mathcal{R}(x)$.

Let now $\delta'(z) \in \mathcal{R}(x)$, $x \in \mathbb{Z}[\beta^{-1}] \cap [0,1)$, $z \in \mathbb{Z}[\beta^{-1}] \cap [0,\infty)$.
For each $k \in \mathbb{N}$, there is an $x_k \in T^{-k}(x)$ such that $\delta'(\beta^{-k} z) \in \mathcal{R}(x_k)$.
Then the set $\{\delta'(\beta^{-k}z - x_k):\, k\in\mathbb{N}\}$ is bounded and thus finite by Lemma~\ref{l:del}. 
Choose $y \in \mathbb{Z}[\beta^{-1}]$ such that $x_k - \beta^{-k}z = y$ for infinitely many $k \in \mathbb{N}$. 
Let $j$ and~$k$ be two successive elements of the set $\{k \in \mathbb{N}:\, x_k - \beta^{-k}z = y\}$, with $j$ large enough such that $y + \beta^{-j}z < \widehat{y}$. 
Then $T^{k-j}(y + \beta^{-k}z) = y + \beta^{-j}z$ and $T^{k-j}(y + \beta^{-k}z) = T^{k-j}(y) + \beta^{-j}z$ thus
$T^{k-j}(y) =  y$, which implies that $y \in P$ and $T^j(y + \beta^{-j}z) = T^j(x_j) = x$.
We infer that $x \in T^n(P + \beta^{-n}z)$ for all $n \in \mathbb{N}$ such that $y + \beta^{-n} z < \widehat{y}$ for all $y \in P$, thus~\eqref{e:zinRx} holds.

\medskip
We use~\eqref{e:zinRx} to prove the equivalence between \eqref{W} and the tiling property of~$\mathcal{C}_\mathrm{aper}$, similarly to \cite{Akiyama:02,Kalle-Steiner:12}.
If $\mathcal{C}_\mathrm{aper}$ is a tiling, then $\mathcal{R}(0)$ contains an exclusive point.
Since $\delta'(\mathbb{Z}[\beta^{-1}] \cap [0,\infty))$ is dense in~$\mathbb{K}'_\beta$, we have thus an exclusive point $\delta'(z) \in \mathcal{R}(0)$ with $z \in \mathbb{Z}[\beta^{-1}] \cap [0,\infty)$.
By~\eqref{e:zinRx}, this implies that $T^n(P + \beta^{-n}z) = \{0\}$ for all $n \in \mathbb{N}$ satisfying $x + \beta^{-n} z < \widehat{x}$ for all $x \in P$, in particular $T^n(\beta^{-n}z) = 0$ because $0 \in P$.
Hence, \eqref{W} holds with $y = \beta^{-n} z$ (independently from $x \in P$).

Now, we assume that \eqref{W} holds and construct an exclusive point $\delta'(z) \in \mathcal{R}(0)$ of~$\mathcal{C}_\mathrm{aper}$, with $z \in \mathbb{Z}[\beta^{-1}] \cap [0,\infty)$, as in \cite{Akiyama:02,Kalle-Steiner:12}.
Note first that \eqref{W} implies that
\begin{equation} \label{e:W'}
\forall\, x \in P,\, \varepsilon > 0\ \exists\, y \in [0,\varepsilon),\, n \in \mathbb{N}:\, T^n(x+y) = T^n(y) = 0.
\end{equation}
Indeed, if $x = \max\{T^j(x):\, j \in \mathbb{N}\}$, $T^k(x) = x$, and $T^n(x+y) = T^n(y) = 0$, then we have $T^{\ell k-j}(T^j(x) + \beta^{j-\ell k} y) = x+y$ and thus $T^{n+\ell k-j}(T^j(x) + \beta^{j-\ell k}y) = 0 = T^{n+\ell k-j}(\beta^{j-\ell k}y)$ for all $\ell \ge 1$, $0 \le j < k$, which proves~\eqref{e:W'}; see also \cite[Lemma~1]{Akiyama-Rao-Steiner:04}.

If $P = \{0\}$, then $\delta'(0)$ is an exclusive point. 
Otherwise, let $P = \{0,x_1,\ldots,x_h\}$ and set
\[
z = \beta^{k_1+k_2+\cdots+k_h} y_1 + \beta^{k_2+\cdots+k_h} y_2 + \cdots + \beta^{k_h} y_h,
\]
where $y_j$ and $k_j$ are defined recursively for $1 \le j \le h$ with the following properties:
\begin{itemize}
\itemsep1ex
\item
$T^{k_j}\big(T^{k_1+\cdots+k_{j-1}}(x_j + \sum_{i=1}^{j-1} y_i\,\beta^{-k_1-\cdots-k_{i-1}}) + y_j \big) = T^{k_j}(y_j) = 0$,
\item
$T^{k_1+\cdots+k_j}\big(x_{j+1} + \sum_{i=1}^j y_i\,\beta^{-k_1-\cdots-k_{i-1}}\big) \in P$, if $j < h$,
\item
$x + \sum_{i=\ell}^j y_i\,\beta^{-k_\ell-\cdots-k_{i-1}} < \widehat{x}$ for all $x \in P$, $1 \le \ell \le j$.
\end{itemize}
Then we have
\begin{align*}
& T^{k_1+\cdots+k_h}(x_j + \beta^{-k_1-\cdots-k_h}z) = T^{k_1+\cdots+k_h}\big(x_j + \textstyle{\sum_{i=1}^h} y_i\, \beta^{-k_1-\cdots-k_{i-1}}\big) \\
& = T^{k_j+\cdots+k_h}\Big( T^{k_1+\cdots+k_{j-1}}\big(x_j + \textstyle{\sum_{i=1}^{j-1}} y_i\, \beta^{-k_1-\cdots-k_{i-1}}\big) + y_j + \textstyle{\sum_{i=j+1}^h} y_i\, \beta^{-k_j-\cdots-k_{i-1}} \Big) \\
& = T^{k_{j+1}+\cdots+k_h}\big(\textstyle{\sum_{i=j+1}^h} y_i\, \beta^{-k_{j+1}-\cdots-k_{i-1}}\big) = T^{k_{j+2}+\cdots+k_h}\big(\textstyle{\sum_{i=j+2}^h} y_i\, \beta^{-k_{j+2}-\cdots-k_{i-1}}\big) = \cdots = 0
\end{align*}
for $0 \le j \le h$, with $x_0 = 0$.
By~\eqref{e:zinRx}, $\delta'(z) $ is an exclusive point of~$\mathcal{C}_\mathrm{aper}$ (in~$\mathcal{R}(0)$).

Therefore, we have proved that Theorem~\ref{t:tiling}~(\ref{i:t44}) is equivalent to (\ref{i:t41})--(\ref{i:t43}).

\subsection{Boundary graph} \label{sec:boundary-graph}
We study properties of the boundary graph defined in Section~\ref{sec:boundary-graph-1} and prove the tiling condition Theorem~\ref{t:tiling}~(\ref{i:t45}).
First note that the boundary graph is finite since, for each node $[v,x,w]$, $\delta'_\infty(x)$~is contained in the intersection of the Delone set $\delta'_\infty(\mathbb{Z}[\beta] \cap (-1,1))$ with the bounded set $\pi'_\infty(\mathcal{R}(0) - \mathcal{R}(0))$.

\medskip
Next, we show that the labels of the infinite paths in the boundary graph provide pairs of expansions exactly for the points that lie in $\mathcal{R}(x) \cap \mathcal{R}(y)$ for some $x,y \in \mathbb{Z}[\beta^{-1}] \cap [0,1)$, $x \neq y$.
When $\mathcal{C}_\mathrm{aper}$ is a tiling, these points are exactly the boundary points of~$\mathcal{R}(x)$, $x \in \mathbb{Z}[\beta^{-1}] \cap [0,1)$, hence the name ``boundary graph''. 
(In general, ``intersection graph'' might be a better name.) 
If $x \in \mathbb{Z}[\beta^{-1}] \cap [v,\widehat{v})$, $y \in \mathbb{Z}[\beta^{-1}] \cap [w,\widehat{w})$, $v,w \in V$, $x \ne y$, then 
\begin{equation} \label{e:xyintersection}
\mathcal{R}(x) \cap \mathcal{R}(y) \ne \emptyset \quad \mbox{if and only if} \quad \mbox{$[v,y\!-\!x,w]$ is a node of the boundary graph},
\end{equation}
and 
\begin{equation} \label{e:xyintersection2} 
\mathbf{z} \in \mathcal{R}(x) \cap \mathcal{R}(y) \quad \mbox{if and only if} \quad \mathbf{z} = \delta'(x) + \sum_{k=0}^\infty \delta'(a_k \beta^k) = \delta'(y) + \sum_{k=0}^\infty \delta'(b_k \beta^k)
\end{equation} 
for the sequence of labels $(a_0,b_0) (a_1,b_1) \cdots$ of an infinite path starting in ${[v,y\!-\!x,w]}$.

For $x \in \mathbb{Z}[\beta^{-1}] \cap [v,\widehat{v})$, $y \in \mathbb{Z}[\beta^{-1}] \cap [w,\widehat{w})$, we have $\mathcal{R}(x) \cap \mathcal{R}(y) \ne \emptyset$ if and only if 
\[
\delta'(0) \in \mathcal{R}(x) - \mathcal{R}(y) = \mathcal{R}(v) - \mathcal{R}(w) + \delta'(w-v) - \delta'(y-x),
\]
i.e., $\delta'(y-x) \in \mathcal{R}(v) - \mathcal{R}(w) + \delta'(w-v) \subseteq \overline{\delta'(\mathbb{Z}[\beta])}$ and thus $y-x \in \mathbb{Z}[\beta]$ by Lemma~\ref{l:betaZ}.
If moreover $x \neq y$, this is equivalent to $[v,{y\!-\!x},w]$ being a node of the boundary graph, which proves~\eqref{e:xyintersection}.
Let $\mathbf{z} \in \mathcal{R}(x) \cap \mathcal{R}(y)$.
Using Theorem~\ref{th:tprop}~(\ref{i:t13}), we have
\[
\mathcal{R}(x)\cap\mathcal{R}(y) = \bigcup_{\substack{x_1\in T^{-1}(x) \\ y_1\in T^{-1}(y)}}
\beta\, \mathcal{R}(x_1)\cap \beta\, \mathcal{R}(y_1) = \hspace{-5em} \bigcup_{\textstyle[v,y\!-\!x,w]\!\stackrel{\scriptstyle(a_0,b_0)}\to\![v_1,\frac{\scriptstyle b_0-a_0+y-x}{\scriptstyle\beta},w_1]} \hspace{-5em} \beta\, \Big( \mathcal{R}\big(\tfrac{a_0+x}{\beta}\big) \cap \mathcal{R}\big(\tfrac{b_0+y}{\beta}\big) \Big),
\]
where the transitions are edges in the boundary graph, with $\frac{a_0+x}{\beta} \in [v_1, \widehat{v_1})$, $\frac{b_0+y}{\beta} \in [w_1, \widehat{w}_1)$.
Thus we have $\mathbf{z} \in \beta\, (\mathcal{R}(x_1) \cap \mathcal{R}(y_1))$ for some $x_1 = \frac{a_0+x}{\beta} \in T^{-1}(x)$, $y_1 = \frac{b_0+y}{\beta} \in T^{-1}(y)$, and ${[v,y\!-\!x,w]} \stackrel{(a_0,b_0)}\to {[v_1,y_1\!-\!x_1,w_1]}$ is an edge in the boundary graph.
Iterating this observation, we get a path in the boundary graph labelled by $(a_0,b_0) (a_1,b_1) \cdots$ such that, for each $k \in \mathbb{N}$, $\mathbf{z} \in \beta^k\, (\mathcal{R}(x_k) \cap \mathcal{R}(y_k))$ with $x_k = \big(\sum_{j=0}^{k-1} a_j \beta^j + x\big)\, \beta^{-k} \in T^{-k}(x)$, $y_k = \big(\sum_{j=0}^{k-1} b_j \beta^j + y\big)\, \beta^{-k} \in T^{-k}(y)$.
Since $\lim_{k\to\infty} \beta^k\, \mathcal{R}(x_k) = \delta'(x) + \sum_{j=0}^\infty \delta'(a_j \beta^j)$ and $\lim_{k\to\infty} \beta^k\, \mathcal{R}(y_k) = \delta'(y) + \sum_{j=0}^\infty \delta'(b_j \beta^j)$, we obtain one direction of~\eqref{e:xyintersection2}. 
The other direction of~\eqref{e:xyintersection2} is proved similarly.

\medskip
Next, we prove that $\mathcal{C}_\mathrm{aper}$ is a tiling if and only if $\varrho < \beta$, where $\varrho$ denotes the spectral radius of the boundary graph. 
We have seen that, for $x \in \mathbb{Z}[\beta^{-1}] \cap [v,\widehat{v})$, $y \in \mathbb{Z}[\beta^{-1}] \cap [w,\widehat{w})$ with $\mathcal{R}(x) \cap \mathcal{R}(y) \ne 0$, each path of length~$k$ starting from the node ${[v,y\!-\!x,w]}$ in the boundary graph corresponds to a pair $x_k \in T^{-k}(x)$, $y_k \in T^{-k}(y)$ with $\mathcal{R}(x_k) \cap \mathcal{R}(y_k) \ne \emptyset$. 
As, for each $\tilde{x} \in \mathbb{Z}[\beta^{-1}] \cap [0,1)$, there is a bounded number of $\tilde{y} \in \mathbb{Z}[\beta^{-1}] \cap [0,1)$ such that $\mathcal{R}(x)\cap\mathcal{R}(y) \ne \emptyset$, the number of these paths gives, up to a multiplicative constant, the number of subtiles $\beta^k\, \mathcal{R}(x_k)$ of~$\mathcal{R}(x)$ that meet $\bigcup_{y\in\mathbb{Z}[\beta^{-1}]\cap[0,1)\setminus\{x\}} \mathcal{R}(y)$.
Hence we have $\mu'\big(\mathcal{R}(x) \cap \bigcup_{y\in\mathbb{Z}[\beta^{-1}]\cap[0,1)\setminus\{x\}} \mathcal{R}(y)\big) \le P(k)\, \varrho^k \beta^{-k}$
for all $k \in \mathbb{N}$, with some polynomial~$P(k)$.
If $\varrho < \beta$, this yields that $\mathcal{C}_\mathrm{aper}$ is a tiling.
On the other hand, if $\mathcal{C}_\mathrm{aper}$ is a tiling, then $\partial\mathcal{R}(x) = \mathcal{R}(x)\cap \bigcup_{y\in\mathbb{Z}[\beta^{-1}]\cap[0,1)\setminus\{x\}} \mathcal{R}(y)$, thus the number of paths of length $k$ from ${[v,y\!-\!x,w]}$ is bounded by a constant times~$R_k(v)$, with $R_k(v)$ as in~\eqref{e:Rkv}.
Since $\# R_k(v) = O(\alpha^k)$ with $\alpha<\beta$ by~\eqref{e:est1}, we have that $\varrho \le \alpha < \beta$.
Therefore, Theorem~\ref{t:tiling}~(\ref{i:t45}) is equivalent to (\ref{i:t41})--(\ref{i:t43}).

\medskip
The equivalence between Theorem~\ref{t:tiling} (\ref{i:t41})--(\ref{i:t42}) and~(\ref{i:t45}) can also be extended to multiple tilings. 
To this end, one defines a~generalisation of the boundary graph that recognises all points that lie in $m$ tiles (instead of $2$ tiles). 
Then the spectral radius of this graph is less than~$\beta$ if and only if the covering degree of $\mathcal{C}_\mathrm{aper}$ is less than~$m$.

\subsection{Periodic tiling with Rauzy fractals} \label{sec:periodic-tiling-with}
The last of our equivalent tiling conditions is that of the periodic tiling, under the condition that \eqref{QM} holds. 
This condition is satisfied when the size of~$V$ is equal to the degree of the algebraic number~$\beta$; the following examples show that \eqref{QM} can be true or false when $\# V > \deg(\beta)$. 

\begin{example} \label{ex:QM}
Let $\beta > 1$ satisfy $\beta^3 = t\beta^2 - \beta + 1$ for some integer $t\geq 2$.
Then 
\[
T(1^-) = \beta - (t-1) = \tfrac{(t-1)\beta^2+1}{\beta^3},\ T^2(1^-) = \tfrac{(t-1)\beta^2+1}{\beta^2} - (t-1) = \tfrac{1}{\beta^2},\ T^3(1^-) = \tfrac{1}{\beta},\ T^4(1^-) = 1,
\]
thus $\widehat{V} =\{1, \beta-(t-1), \beta^2-(t-1)\beta-(t-1), \beta^2-t\beta+1\}$, and $L = \langle \beta-t, \beta^2-t\beta \rangle_\mathbb{Z}$.
Therefore, \eqref{QM} holds. 
\end{example}

\begin{example}
Let $\beta>1$ satisfy $\beta^3 = t\beta^2 + (t+1) \beta + 1$ for some integer $t \geq 0$. (For $t = 0$, $\beta$ is the smallest Pisot number.)
Then
\[
T(1^-) = \beta - (t+1) = \tfrac{t\beta+1}{\beta^4}, T^2(1^-) = \tfrac{t\beta+1}{\beta^3}, T^3(1^-) = \tfrac{t\beta+1}{\beta^2}, T^4(1^-) = \tfrac{t\beta+1}{\beta} - t = \tfrac{1}{\beta}, T^5(1^-) = 1,
\]
thus $\widehat{V} = \{1, \beta-(t+1), \beta^2-(t+1)\beta, -\beta^2+(t+1)\beta+1, \beta^2-t\beta-(t+1)\}$. 
Since 
\[
1 = \big(1-\beta^2+(t+1)\beta\big) + \big(\beta^2-(t+1)\beta\big) = \big(1 - T^2(1^-)\big) + \big(1 - T^3(1^-)\big) \in L,
\]
we have $\widehat{V} \subseteq L$ and thus $L = \mathbb{Z}[\beta]$, hence \eqref{QM} does not hold. 
According to \cite{Ei-Ito:05} (see also \cite{Ei-Ito-Rao:06}), the central tile $\mathcal{R}(0)$ associated with the smallest Pisot number $\beta$ cannot tile periodically its representation space $\mathbb{K}'_\beta = \mathbb{C}$.
\end{example}

The central tile~$\mathcal{R}(0)$ is closely related to the set of non-negative \emph{$\beta$-integers}
\[
\mathbb{N}_\beta = \bigcup_{n\geq 0}\beta^n\, T^{-n}(0),
\]
as $\mathcal{R}(0) = \overline{\delta'(\mathbb{N}_\beta)}$.
We know from \cite{Thurston:89,Fabre:95,Frougny-Gazeau-Krejcar:03} that the sequence of distances between consecutive elements of~$\mathbb{N}_\beta$ is the fixed point of the \emph{$\beta$-substitution}~$\sigma$, which can be defined on the alphabet~$\widehat{V}$ by
\[
\sigma(x) = \underbrace{1\,1\,\cdots\,1}_ {\lceil T(x^-)\rceil-1\,\text{times}} T(x^-) \qquad (x \in \widehat{V}). 
\]
More precisely, we have $\mathbb{N}_\beta = \big\{ \sum_{k=0}^{n-1} w_k: n \in \mathbb{N}\big\}$, where $w_0 w_1 \cdots \in \widehat{V}^\mathbb{N}$ is the infinite word starting with $\sigma^k(1)$ for all $k \in \mathbb{N}$.
Similarly to \cite[Proposition~3.4]{Ito-Rao:06}, we obtain that
\begin{equation} \label{e:dec}
L + \mathbb{N}_\beta = \bigg\{\sum_{v\in V} c_v\, \widehat{v}: c_v \in \mathbb{Z},\, \sum_{v\in V} c_v \ge 0\bigg\}, 
\end{equation}
using that $L = \{\sum_{v\in V} c_v\, \widehat{v}:\, c_v \in \mathbb{Z},\, \sum_{v\in V} c_v = 0\}$, which implies that 
\[
L + \sum_{k=0}^{n-1} w_k = \bigg\{\sum_{v\in V} c_v\, \widehat{v}:\, c_v \in \mathbb{Z},\, \sum_{v\in V} c_v = n\bigg\} 
\]
for all $n \in \mathbb{N}$.
Next, we prove that
\begin{equation} \label{e:percov}
\delta'(L) + \mathcal{R}(0) = Z'.
\end{equation}
If \eqref{QM} does not hold, then $\delta'(L)$ is dense in $\overline{\delta'(\mathbb{Z}[\beta])} = Z'$, hence \eqref{e:percov} follows from the fact that $\mathcal{R}(0)$ has non-empty interior.
If \eqref{QM} holds, then it is sufficient to prove that
\begin{equation} \label{e:LNbeta}
\overline{\delta'(L + \mathbb{N}_\beta)} = \overline{\delta'(\mathbb{Z}[\beta])},
\end{equation}
as $\delta'(L)$ is a lattice in~$Z'$ by Lemma~\ref{l:Llat} and $\mathcal{R}(0)$ is compact.
Since $\widehat{V}$ spans~$\mathbb{Z}[\beta]$, we can write each $x \in \mathbb{Z}[\beta]$ as $x = \sum_{v\in V} c_v\, \widehat{v}$, with $c_v \in \mathbb{Z}$.
By \eqref{QM}, we have $\beta^k \notin L$ for infinitely many $k \in \mathbb{N}$, thus $x + (\sum_{v\in V} c_v) \beta^k \in L + \mathbb{N}_\beta$ or $x - (\sum_{v\in V} c_v) \beta^k \in L + \mathbb{N}_\beta$ for these~$k$. 
Since $\lim_{k\to\infty} \delta'\big(x \pm (\sum_{v\in V} c_v) \beta^k\big) = \delta'(x)$, we obtain that~\eqref{e:LNbeta} and thus \eqref{e:percov} holds.

\medskip
Throughout the rest of the subsection, assume that \eqref{QM} holds.
Then we have 
\[
\mathbb{N}_\beta \cap L = \{0\}
\]
because $\sum_{k=0}^{n-1} w_k \in L$ for some $n \ge 1$ implies that $\{\sum_{v\in V} c_v\, \widehat{v}:\, c_v \in \mathbb{Z},\, \sum_{v\in V} c_v = n\} \subseteq L$, in particular $n\, \widehat{v} \in L$ for all $v \in V$, contradicting~\eqref{QM}.
We immediately obtain that
\begin{equation} \label{e:Nbetapartitition} 
\mathbb{N}_\beta \cap (x + \mathbb{N}_\beta) = \emptyset \quad \mbox{for all} \quad x \in L \setminus \{0\},
\end{equation}
i.e., $\{x + \mathbb{N}_\beta:\, x \in L\}$ forms a partition of $L + \mathbb{N}_\beta$. 
It is natural to expect from this partition property that $\mathcal{C}_\mathrm{per} = \{\delta'(x) + \overline{\delta'(\mathbb{N}_\beta)}:\, x \in L\}$ is a tiling, but this may not be true due to the effects of taking the closure. 
We can only prove that the tiling property of $\mathcal{C}_\mathrm{per}$ is equivalent to that of~$\mathcal{C}_\mathrm{aper}$, similarly to \cite[Proposition 3.5]{Ito-Rao:06} and \cite[Proposition~6.72~(v)]{Sing:06b}.

Suppose that $\mathcal{C}_\mathrm{aper}$ is a tiling.
From~\eqref{e:percov}, we know that $\mathcal{C}_\mathrm{per}$ covers~$Z'$.
Consider $\mathcal{R}(0) \cap (\delta'(x) + \mathcal{R}(0))$ for some $x \in L \setminus \{0\}$, and assume w.l.o.g.\ that $x > 0$. 
As~$\beta^{-k}\, \mathcal{R}(0) = \bigcup_{y\in T^{-k}(0)} \mathcal{R}(y)$, we have that
\[
\beta^{-k}\, \Big(\mathcal{R}(0) \cap \big(\delta'(x) + \mathcal{R}(0)\big) \Big) = \bigcup_{y,z\in T^{-k}(0)} \Big( \mathcal{R}(y) \cap \big(\delta'(\beta^{-k} x) + \mathcal{R}(z)\big) \Big).
\]
If $z + \beta^{-k} x < \widehat{z}$, then $\delta'(\beta^{-k} x) + \mathcal{R}(z) = \mathcal{R}(z + \beta^{-k} x)$.
Since $y \neq z + \beta^{-k} x$ for all $y, z \in T^{-k}(0)$ by~\eqref{e:Nbetapartitition}, the tiling property of~$\mathcal{C}_\mathrm{aper}$ implies that $\mu'(\mathcal{R}(y) \cap \mathcal{R}(z + \beta^{-k} x)) = 0$ (if $z  + \beta^{-k} x < 1$). 
Therefore, only tiles $\delta'(\beta^{-k} x) + \mathcal{R}(z)$ with $z + \beta^{-k} x \ge \widehat{z}$ can contribute to the measure of $\mathcal{R}(0) \cap (\delta'(x) + \mathcal{R}(0))$.
The inequality $z + \beta^{-k} x \ge \widehat{z}$ is equivalent to $z \in [\widehat{v} - \beta^{-k} x, \widehat{v})$ for some $v \in V$.
As $\beta^k\, T^{-k}(0) \subseteq \mathbb{N}_\beta$  and the distances between consecutive elements in~$\mathbb{N}_\beta$ are in~$\widehat{V}$, there are at most $(\#V)\,x \,/ \min \widehat{V}$ numbers $z \in T^{-k}(0)$ satisfying this inequality.
This implies that
\[
\mu'\Big( \mathcal{R}(0) \cap \big(\delta'(x) + \mathcal{R}(0)\big) \Big) \le \frac{(\#V)\,x}{\min\widehat{V}}\, \mu'\big(\beta^k\, \mathcal{R}(0)\big) = \frac{(\#V)\,x}{\min\widehat{V}}\, \mu'\big(\mathcal{R}(0)\big)\, \beta^{-k}
\]
for all $k \in \mathbb{N}$, hence $\mathcal{R}(0) \cap (\delta'(x) + \mathcal{R}(0))$ has measure zero, thus $\mathcal{C}_\mathrm{per}$~is a tiling.

If $\mathcal{C}_\mathrm{aper}$ is not a tiling, then it covers~$\mathbb{K}'_\beta$ at least twice, and a similar proof as for the tiling property shows that $\mathcal{R}(0) \cap \bigcup_{x\in L\setminus\{0\}} (\delta'(x) + \mathcal{R}(0))$ has positive measure.
Hence, we have proved that Theorem~\ref{t:tiling}~(\ref{i:t46}) is equivalent to~(\ref{i:t41}), which concludes the proof of  Theorem~\ref{t:tiling}.

\medskip
With some more effort, the proof above can be adapted to show that $\mathcal{C}_\mathrm{per}$ is a multiple tiling with same covering degree as~$\mathcal{C}_\mathrm{aper}$.

\medskip
A~patch of the periodic tiling induced by $\beta^3=2\beta^2-\beta+1$, which satisfies~\eqref{QM} by Example~\ref{ex:QM}, is depicted in Figure~\ref{fig:redper}.

\begin{figure}[ht]
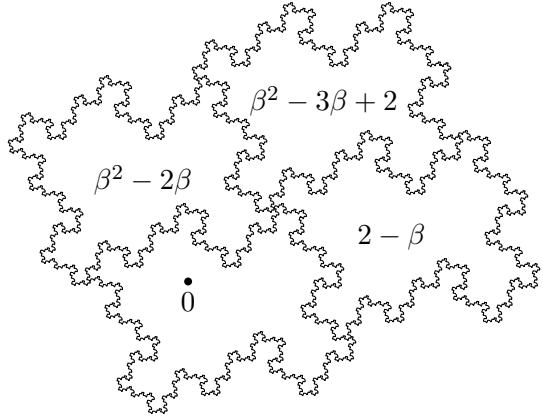


\caption{Patch of the periodic tiling $\mathcal{R}(0)+\delta'(\langle \beta-2,\beta^2-2\beta\rangle_\mathbb{Z})$ induced by $\beta^3=2\beta^2-\beta+1$.\label{fig:redper}}
\end{figure}

\section{Gamma function} \label{sec:gamma-function}

\subsection{Proof of Theorem~\ref{t:gammabeta}}
To prove Theorem~\ref{t:gammabeta}, suppose first that \eqref{e:gammabeta} does not hold.
Then there exists $y \in \mathbb{Z}_{N(\beta)} \cap [0,1)$ with $\delta(y) \notin \mathscr{X}$ and 
\begin{equation} \label{e:ymin}
y < \inf \bigg( \{1\} \cup \bigcup_{v\in V} \big\{x \in \mathbb{Q}\cap[v,\widehat{v}):\, \delta'_\infty(v-x) \in \pi'_\infty\big(Z'\setminus\mathcal{R}(v)\big)\big\} \bigg).
\end{equation}
Let $v \in V$ be such that $y \in [v,\widehat{v})$.
Then $\delta'(y) \notin \delta'(v) - \mathcal{R}(v)$ and thus $\delta'(v-y) \in Z' \setminus \mathcal{R}(v)$ because $\delta'(\mathbb{Z}_{N(\beta)}\cup V) \subseteq Z'$.
This contradicts~\eqref{e:ymin}, hence~\eqref{e:gammabeta} holds.

Assume now that $\overline{\delta_\mathrm{f}(\mathbb{Q})} = \mathbb{K}_\mathrm{f}$.
We want to prove the opposite inequality of~\eqref{e:gammabeta}. 
Since $\gamma(\beta) \le 1$, the inequality is clearly true if $\{x \in \mathbb{Q}\cap[v,\widehat{v}):\, \delta'_\infty(v-x) \in \pi'_\infty(Z'\setminus\mathcal{R}(v))\} = \emptyset$ for all $v \in V$.
Otherwise, we show that arbitrarily close to each $x \in \mathbb{Q}\cap[v,\widehat{v})$ with $\delta'_\infty(v-x) \in \pi'_\infty(Z'\setminus\mathcal{R}(v))$, we can find $y \in \mathbb{Z}_{N(\beta)}$ with $\delta(y) \notin \mathscr{X}$.
Indeed, for sufficiently small $\varepsilon > 0$, we have 
\[
\delta'_\infty(v-y) \in \pi'_\infty(Z'\setminus\mathcal{R}(v)) \quad \mbox{for all} \quad y \in \mathbb{Q}\cap(x,x+\varepsilon)
\]
because $\pi'_\infty(Z'\setminus\mathcal{R}(v))$ is open. 
By \cite[Lemma~4.7]{Akiyama-Barat-Berthe-Siegel:08}, we have 
\[
\overline{\delta\big(\mathbb{Z}_{N(\beta)}\cap(x,x+\varepsilon)\big)} = \overline{\delta_\infty\big(\mathbb{Q}\cap(x,x+\varepsilon)\big)} \times \overline{\delta_\mathrm{f}\big(\mathbb{Z}_{N(\beta)}\big)}.
\]
The set 
\[
\{\mathbf{z} \in Z'\setminus\mathcal{R}(v):\, \pi'_\infty(\mathbf{z}) \in  \mathbb{Q}\cap(x,x+\varepsilon)\} \subseteq \delta'_\infty\big(\mathbb{Q}\cap(x,x+\varepsilon)\big) \times \overline{\delta_\mathrm{f}(\mathbb{Z}[\beta])}
\]
is non-empty and open in $\delta'_\infty(\mathbb{Q}) \times \mathbb{K}_\mathrm{f}$.
Since $\overline{\delta_\mathrm{f}(\mathbb{Q})} = \mathbb{K}_\mathrm{f}$ implies that $\overline{\delta_\mathrm{f}(\mathbb{Z}_{N(\beta)})} = \overline{\delta_\mathrm{f}(\mathbb{Z}[\beta])}$, we obtain that this set contains some $\delta'(y)$ with $y \in \mathbb{Z}_{N(\beta)} \cap(x,x+\varepsilon)$.
Then we have $\delta(y) \notin \mathscr{X}$.
Since $\varepsilon$ can be chosen arbitrary small, this concludes the proof of Theorem~\ref{t:gammabeta}.

\subsection{Boundary in the quadratic case}
Let now $\beta$ be a quadratic Pisot number. 
Then we show that the boundary of $\mathcal{R}(x)$ is simply the intersection with two of its neighbours.
More precisely, for each $x \in \mathbb{Z}[\beta^{-1}] \cap [0,1)$, we have that
\begin{equation} \label{e:quadboundary} 
\partial \mathcal{R}(x) = \mathcal{R}(x) \cap \Big( \mathcal{R}\big(x + \beta - \lfloor x + \beta \rfloor\big) \cup \mathcal{R}\big(x - \beta - \lfloor x - \beta \rfloor\big) \Big).
\end{equation}
There may be other neighbours of~$\mathcal{R}(x)$, but they meet~$\mathcal{R}(x)$ only in points that also lie in $\mathcal{R}(x \pm \beta - \lfloor x \pm \beta \rfloor)$.

To prove \eqref{e:quadboundary}, let $x \in \mathbb{Z}[\beta^{-1}] \cap [0,1)$, and set $y = x + \beta - \lfloor x + \beta \rfloor$, $z = x - \beta - \lfloor x - \beta \rfloor$, $\varepsilon = \min\{\widehat{x}-x, \widehat{y}-y, \widehat{z} - z\}$. 
For each $u \in \mathbb{Z}[\beta] \cap [0,\varepsilon)$, we have
\begin{equation} \label{e:xu}
\mathcal{R}(x) \cap \mathbb{K}'_\infty \times \delta_\mathrm{f}(\{-u\}) = \big(\mathcal{S}(x+u) - \delta'_\infty(u)\big) \times \delta_\mathrm{f}(\{-u\}),
\end{equation}
and $\mathcal{S}(x+u)$ is an interval by Theorem~\ref{th:int}~(\ref{i:t35}).
Since $\overline{\delta_\mathrm{f}(\mathbb{Z}[\beta] \cap [0,\varepsilon))} = \overline{\delta_\mathrm{f}(\mathbb{Z}[\beta])}$ and $\mathcal{R}(x)$ is the closure of its interior, each $\mathbf{z} \in \partial \mathcal{R}(x)$ can be approximated by endpoints of intervals $(\mathcal{S}(x+u) - \delta'_\infty(u)) \times \delta_\mathrm{f}(\{-u\})$, $u \in \mathbb{Z}[\beta] \cap [0,\varepsilon)$.
By the proof of Theorem~\ref{th:int}~(\ref{i:t35}) in Section~\ref{sec:quadr-pisot-numb} and since $y+u < \widehat{y}$, $z+u < \widehat{z}$, each endpoint of $\mathcal{S}(x+u)$ lies also in $\mathcal{S}(y+u)$ or $\mathcal{S}(z+u)$.
As \eqref{e:xu} still holds if we replace $x$ by $y$ or~$z$, we obtain that $\mathbf{z} \in \mathcal{R}(y) \cup \mathcal{R}(z)$.

\subsection{Pruned boundary graph}
Formula~\eqref{e:quadboundary} suggests the following definition: The \emph{pruned boundary graph} of a quadratic Pisot number~$\beta$ is the subgraph of the boundary graph that is induced by the set of nodes $[v,x,w]$ with $x \in \pm\{\beta-\lfloor\beta\rfloor, \lceil\beta\rceil-\beta\}$.

By~\eqref{e:quadboundary}, we can replace the boundary graph by its pruned version in \eqref{e:xyintersection} and~\eqref{e:xyintersection2}.
This allows us to simplify some arguments of \cite[Section~5]{Akiyama-Barat-Berthe-Siegel:08}.

\medskip
In the following, let $\beta^2 = a\beta + b$ with $a \ge b \ge 1$.
Then we have $V = \{0, v\}$ with $v = \beta - a$.
We do not consider the case of negative~$b$ because we know from \cite[Proposition~5]{Akiyama:98} that $\mathrm{Pur}(\beta) \cap \mathbb{Q} = \{0\}$ when $\beta$ has a positive real conjugate, which implies that $\gamma(\beta) = 0$. 

For the description of the pruned boundary graph, we have to distinguish two cases:
\begin{itemize}
\item
If $2b \le a$, then the transitions of the pruned boundary graph are
\begin{align*}
[v,{v\!-\!1},0] \stackrel{(d,d+a-b+1)}\longrightarrow [0,{1\!-\!v},v] & \qquad  (0 \le d < b), \\
[0,{1\!-\!v},v] \stackrel{(d,d-a+b-1)}\longrightarrow [v,{v\!-\!1},0] & \qquad (a-b < d \le a), \\
[0,v,v], [v,v,v] \stackrel{(d,d+a-b)}\longrightarrow [0,{1\!-\!v},v] & \qquad (0 \le d < b), \\
[v,-v,0], [v,-v,v] \stackrel{(d,d-a+b)}\longrightarrow [v,{v\!-\!1},0] & \qquad (a-b \le d < a).
\end{align*}
\item
If $2b > a$, then the transitions of the pruned boundary graph are
\begin{align*}
[v,{v\!-\!1},0], [0,{v\!-\!1},0] \stackrel{(d,d+a-b+1)}\longrightarrow [0,{1\!-\!v},0] & \qquad  (0 \le d \le 2b-a-2), \\
[v,{v\!-\!1},0], [0,{v\!-\!1},0] \stackrel{(d,d+a-b+1)}\longrightarrow [0,{1\!-\!v},v] & \qquad  (2b-a-1 \le d < b), \\
[0,{1\!-\!v},v], [0,{1\!-\!v},0] \stackrel{(d,d-a+b-1)}\longrightarrow [0,{v\!-\!1},0] & \qquad (a-b < d < b), \\
[0,{1\!-\!v},v], [0,{1\!-\!v},0] \stackrel{(d,d-a+b-1)}\longrightarrow [v,{v\!-\!1},0] & \qquad (b \le d \le a), \\
[0,v,v] \stackrel{(d,d+a-b)}\longrightarrow [0,{1\!-\!v},0] & \qquad (0 \le d < 2b-a), \\
[0,v,v] \stackrel{(d,d+a-b)}\longrightarrow [0,{1\!-\!v},v] & \qquad (2b-a \le d < b), \\
[v,-v,0] \stackrel{(d,d-a+b)}\longrightarrow [0,{v\!-\!1},0] & \qquad (a-b \le d < b), \\
[v,-v,0] \stackrel{(d,d-a+b)}\longrightarrow [v,{v\!-\!1},0] & \qquad (b \le d < a).
\end{align*}
\end{itemize}
To prove that these are exactly the transitions of the pruned boundary graph, note first that the states of the graph are of the form $[u,x,w]$ with $u,w \in \{0,v\}$ and $x \in \pm\{v, {1\!-\!v}\}$. 
The only possibilites are thus $[0,v,v]$, $[0,{1\!-\!v},v]$, $[0,{1\!-\!v},0]$ if $2v > 1$, $[v,v,v]$ if $2v < 1$, and their negatives $[v,-v,0]$, $[v,{v\!-\!1},0]$, $[0,{v\!-\!1},0]$, and $[v,-v,v]$ respectively.
Since $v = \frac{b}{\beta}$, we have $2v > 1$ if and only if $2b > a$. 
Moreover, the only possibility for $\frac{v+d}{\beta} \in \pm\{v, {1\!-\!v}\}$ with $d \le a$ is that $\frac{v+d}{\beta} = 1+\frac{d-a}{\beta} = 1-v$, i.e., $d = a-b$. 
Therefore, the above lists contain all possible transitions.
Since we have an infinite path starting from each node, all given nodes correspond to the intersection of some tiles and are thus in the boundary graph.

\medskip
From the description of the boundary graph, we see that the paths starting in a state $[u,x,w]$ depend only on~$x$. 
Therefore, we can merge the states with same middle component and obtain the graph in Figure~\ref{f:prunedgraph}.
Note that we have exactly $|N(\beta)|$ outgoing transitions from each state, which can be explained by the fact that the intersection $\mathcal{R}(x) \cap \mathcal{R}(x+\beta-\lfloor x+\beta\rfloor) \cap \mathbb{R} \times \{y\}$ consists of a singleton for each $x \in \mathbb{Z}[\beta]$, $y \in \overline{\delta_\mathrm{f}(\mathbb{Z}[\beta])}$. 

\begin{figure}[ht]
\begin{tikzpicture} [->,>=stealth',shorten >=1pt,auto,node distance=4cm,thick,main node/.style={circle,fill=blue!10,draw}] 
\node[main node] (1) {$\beta-a$};
\node[main node] (2) [right of=1] {$\!a\!+\!1\!-\!\beta\!$};
\node[main node] (3) [right of=2] {$\!\beta\!-\!a\!-\!1\!$};
\node[main node] (4) [right of=3] {$a-\beta$};
\path[every node/.style={font=\sffamily\small}]
(1) edge node [above] {$\begin{array}{c}(0,a\!-\!b)\\\vdots\\(b\!-\!1,a\!-\!1)\end{array}$} (2)
(2) edge [bend left] node[above] {$\begin{array}{c}(a\!-\!b\!+\!1,0)\\\vdots\\(a,b\!-\!1)\end{array}$} (3)
(3) edge [bend left] node[below] {$\begin{array}{c}(0,a\!-\!b\!+\!1)\\\vdots\\(b\!-\!1,a)\end{array}$} (2)
(4) edge node [above] {$\begin{array}{c}(a\!-\!b,0)\\\vdots\\(a\!-\!1,b\!-\!1)\end{array}$} (3);
\end{tikzpicture}
\caption{The pruned boundary graph (after merging the states with same middle component) for $\beta^2=a\beta+b$, $a \ge b \ge 1$. \label{f:prunedgraph}}
\end{figure}
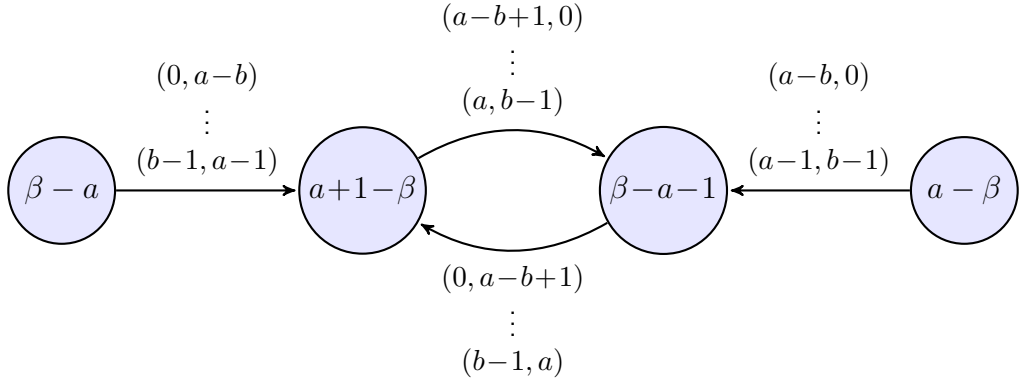

\subsection{Proof of Theorem~\ref{t:quadratic}}
Let $\beta^2 = a \beta + b$, $a \ge b \ge 1$, and $\beta' = -b\beta^{-1} = a-\beta$ be the Galois conjugate of~$\beta$.
By Theorem~\ref{t:gammabeta}, we have
\begin{align}\label{e:inf}
\gamma(\beta) & \ge \inf \big(\{1\} \cup \big\{x \in \mathbb{Q} \cap [0,\beta\!-\!a):\, \delta'_\infty(-x) \in \pi'_\infty\big(Z'\setminus\mathcal{R}(0)\big)\big\} \\ \nonumber
& \qquad \quad \cup \big\{x \in \mathbb{Q} \cap [\beta\!-\!a,1):\, \delta'_\infty(-x) \in \pi'_\infty\big(Z'\setminus\mathcal{R}(\beta\!-\!a)\big) - \delta'_\infty(\beta\!-\!a)\big\} \big),
\end{align}
with equality if $\overline{\delta_\mathrm{f}(\mathbb{Q})} = \mathbb{K}_\mathrm{f}$. 
By Lemma~\ref{l:qdense}, the latter equality holds if $\mathrm{gcd}(a,b) = 1$.
We have to show that the infimum is equal to $\max\{0, 1-\frac{(b-1)b\,\beta}{\beta^2-b^2}\}$. 

By~\eqref{e:quadboundary} and its proof, $\pi'_\infty(Z'\setminus\mathcal{R}(0))$ is the union of two half-lines:
\[
\pi'_\infty\big(Z'\setminus\mathcal{R}(0)\big) = \Big(-\infty, \max \pi'_\infty\big(\mathcal{R}(0) \cap \mathcal{R}({\beta\!-\!a})\big)\Big) \cup \Big(\min \pi'_\infty\big(\mathcal{R}(0) \cap \mathcal{R}({a\!+\!1\!-\!\beta}), \infty\big)\Big).
\]
The point in $\mathcal{R}(0) \cap \mathcal{R}({\beta\!-\!a})$ that realises $\max \pi'_\infty(\mathcal{R}(0) \cap \mathcal{R}({\beta\!-\!a}))$ is given by the following infinite walk in the pruned boundary graph starting from ${\beta\!-\!a}$: choose the transition to ${a\!+\!1\!-\!\beta}$ with maximal first digit ${b\!-\!1}$, then the transition to ${\beta\!-\!a\!-\!1}$ with minimal first digit ${a\!-\!b\!+\!1}$ (since we multiply it by an odd power of $\beta'<0$), again the transition to ${a\!+\!1\!-\!\beta}$ with maximal first digit ${\!b\!-\!1}$, etc.
This gives
\begin{align*}
\max \pi'_\infty\big(\mathcal{R}(0) \cap \mathcal{R}({\beta\!-\!a})\big) & = \sum_{j=0}^\infty \big(b-1 + (a-b+1)\, \beta')\big)\, (\beta')^{2j} \\
& = \frac{b-1+(a-b+1)\,\beta'}{1-(\beta')^2} = \frac{(1-b)\,\beta'}{1-(\beta')^2} - 1 = \frac{(b-1)b\,\beta}{\beta^2-b^2} - 1,
\end{align*}
Note that $\delta'_\infty(x) = x$ for $x \in \mathbb{Q}$. 
If $\frac{(b-1)b\,\beta}{\beta^2-b^2} \ge 1$, then we obtain that
\[
\inf \{x \in \mathbb{Q} \cap [0,\beta\!-\!a):\, \delta'_\infty(-x) \in \pi'_\infty(Z'\setminus\mathcal{R}(0))\} = 0 = \max\big\{0, 1-\tfrac{(b-1)b\,\beta}{\beta^2-b^2}\big\}.
\]

Assume now that $\frac{(b-1)b\,\beta}{\beta^2-b^2} < 1$.
Then, by similar calculations as above, we obtain that
\[
\min \pi'_\infty\big(\mathcal{R}(0) \cap \mathcal{R}({a\!+\!1\!-\!\beta})\big) = \frac{a-b+1+(b-1)\,\beta'}{1-(\beta')^2} = \beta\, \bigg(1 - \frac{(b-1)\beta}{\beta^2-b^2} \bigg) > 0.
\]
Therefore, we have $\big[\frac{(b-1)b\,\beta}{\beta^2-b^2}-1,0\big] \times \overline{\delta_\mathrm{f}(\mathbb{Z}[\beta])} \subseteq \mathcal{R}(0)$, and 
\[
\inf \big\{x \in \mathbb{Q} \cap [0,1):\, \delta'_\infty(-x) \in \pi'_\infty\big(Z'\setminus\mathcal{R}(0)\big)\big\} = 1 - \tfrac{(b-1)b\,\beta}{\beta^2-b^2} = \max\big\{0, 1-\tfrac{(b-1)b\,\beta}{\beta^2-b^2}\big\}.
\]
This concludes the case $0 < 1-\tfrac{(b-1)b\,\beta}{\beta^2-b^2} < \beta-a$. 

Assume now that $\frac{(b-1)b\,\beta}{\beta^2-b^2} \le a+1-\beta$.
We see from the boundary graph that
\[
\max \pi'_\infty\big(\mathcal{R}({\beta\!-\!a}) \cap \mathcal{R}({2\beta\!-\!\lfloor2\beta\rfloor})\big) - \delta'_\infty(\beta\!-\!a) = \max \pi'_\infty\big(\mathcal{R}(0) \cap \mathcal{R}({\beta\!-\!a})\big)
\]
and, since the smallest first digit in the outgoing transitions from $a\!-\!\beta$ is $a\!-\!b$,
\begin{align*}
& \min \pi'_\infty(\mathcal{R}({\beta\!-\!a}) \cap \mathcal{R}(0)) - \delta'_\infty(\beta\!-\!a) = \min \pi'_\infty\big(\mathcal{R}(0) \cap \mathcal{R}({a\!+\!1\!-\!\beta})\big) - 1 \\
& \qquad = \beta - \frac{(b-1)\beta^2}{\beta^2-b^2} - 1 \ge \beta - \frac{(a+1-\beta)\,\beta}{b} -1 = \beta\, \bigg(1-\frac{1}{b}\bigg) \ge 0.
\end{align*}
Similarly as above, we obtain that
\[
\inf \big\{x \in \mathbb{Q} \cap [\beta\!-\!a,1):\, \delta'_\infty(-x) \in \pi'_\infty\big(Z'\setminus\mathcal{R}(\beta\!-\!a)\big) - \delta'_\infty(\beta\!-\!a)\big\} = 1 - \tfrac{(b-1)b\,\beta}{\beta^2-b^2},
\]
provided that $b \ge 2$.
Finally, for $b=1$, we obtain Schmidt's equality $\gamma(\beta) = 1$ \cite{Schmidt:80}. 

To conclude the proof of Theorem~\ref{t:quadratic}, we show that $\frac{(b-1)b\,\beta}{\beta^2-b^2} < 1$ if and only if $(b-1)b < a$.
Indeed, if $(b-1)b \ge a$, then 
\[
\beta^2 - b^2 - (b-1)b\,\beta \le \beta^2 - a\beta - b^2 = b - b^2 \le -a < 0,
\] 
and, if $(b-1)b < a$, then 
\[
\beta^2 - b^2 - (b-1)b\,\beta \ge \beta^2 - (a-1)\,\beta - (a+b) = \beta - a > 0.
\] 

\subsection{Example of $\gamma(\beta)$.} 
Let $\beta = \frac{3+\sqrt{17}}{2}$, i.e., $\beta^2=3\beta+2$. Then, by Theorem~\ref{t:quadratic},
\[  
\gamma(\beta) = 1 - \frac{2\,\beta}{\beta^2-4} = \frac{1}{\beta+2} \approx  0,1798.
\]
Figure~\ref{fig:gamma} illustrates how to obtain $\gamma(\beta)$ as $\min \pi'_\infty\big(-\mathcal{R}(0) \cap -\mathcal{R}({\beta\!-\!3})\big)$.

\begin{figure}[ht]
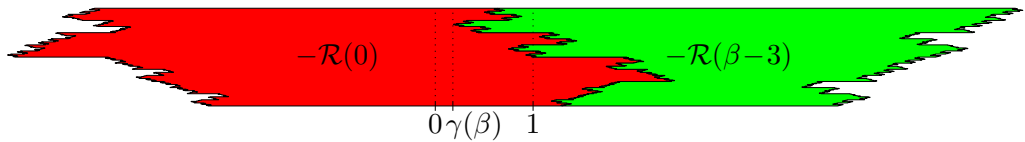


\caption{Visualization of $\gamma(\beta)$ for $\beta^2=3\beta+2$.\label{fig:gamma}}
\end{figure}

\bibliographystyle{amsalpha}
\bibliography{nonunit}
\end{document}